\pdfoutput=1
\documentclass[11pt,a4paper]{article}
\usepackage[margin=2.5cm]{geometry}
\usepackage{amsmath,amssymb,amsthm}
\usepackage{mathtools}
\usepackage{xcolor}
\usepackage{enumitem}
\usepackage{cite}
\usepackage{tikz}
\usetikzlibrary{positioning,arrows.meta,calc}
\usepackage[colorlinks=true,linkcolor=blue!60!black,citecolor=blue!60!black,urlcolor=blue!60!black]{hyperref}

\theoremstyle{plain}
\newtheorem{thrm}{Theorem}[section]
\newtheorem{lmm}[thrm]{Lemma}
\newtheorem{crllr}[thrm]{Corollary}
\newtheorem{prpstn}[thrm]{Proposition}
\theoremstyle{definition}
\newtheorem{dfntn}[thrm]{Definition}
\newtheorem{rmrk}[thrm]{Remark}

\newcommand{\Rref}[1]{\textup{\ref{#1}}}

\newcommand{\xopt}{x^*}
\newcommand{\uopt}{u^*}
\newcommand{\lopt}{\lambda}
\newcommand{\popt}{p}
\newcommand{\yopt}{y^*}
\newcommand{\ropt}{r}

\numberwithin{equation}{section}
\allowdisplaybreaks

\newcommand{\dom}[1]{D(#1)}
\newcommand{\domain}[1]{\operatorname{dom}(#1)}
\newcommand{\ran}[1]{R(#1)}
\newcommand{\diag}{\operatorname{diag}}

\newcommand{\calA}{\mathcal A}
\newcommand{\calB}{\mathcal B}

\newcommand{\calP}{\mathcal P}
\newcommand{\calH}{\mathcal H}
\newcommand{\calU}{\mathcal U}
\newcommand{\Uad}{\calU_\mathrm{ad}}
\newcommand{\prox}{\operatorname{prox}}
\newcommand{\calX}{\mathcal X}

\newcommand{\Xd}{\calX^{\mathrm d}}
\newcommand{\Ed}{E^{\mathrm d}}
\newcommand{\Ad}{\calA^{\mathrm d}}
\newcommand{\tXd}{\widetilde{\calX}^{\mathrm d}}
\newcommand{\tEd}{\widetilde{E}^{\mathrm d}}
\newcommand{\tAd}{\widetilde{\calA}^{\mathrm d}}

\newcommand{\stack}[2]{\left[ \begin{smallmatrix} #1 \\ #2 \end{smallmatrix} \right]}

\newcommand{\N}{\mathbb{N}}
\newcommand{\R}{\mathbb{R}}

\begin{document}

\title{Dual companions and improved regularity of Lagrange multipliers in optimal control\thanks{The authors acknowledge funding by the Deutsche Forschungsgemeinschaft (DFG, German Research Foundation) -- Project-ID 531152215 -- CRC 1701.}}
\author{Till Preuster\thanks{Junior Professorship Numerical Mathematics, Faculty of Mathematics, Chemnitz University of Technology, Germany. E-mail: \texttt{\{till.preuster,manuel.schaller\}@math.tu-chemnitz.de}}
\and Manuel Schaller\footnotemark[2]}
\date{}

\maketitle

\begin{abstract}
We investigate optimal control problems whose state equation takes the form $\calA x-\calB u=b$, where $\calA:\calX\to\calP'$ is an isomorphism from a Banach state space $\calX$ onto the dual of a reflexive Banach space $\calP$. The adjoint equation involves the Banach space adjoint $\calA':\calP\to\calX'$, so that, in general, state and multiplier are of different regularity. For particular classes of partial differential equations, the typically weaker regularity of the multiplier is improved a posteriori by bootstrapping. In this work, we provide an abstract foundation for this methodology on the operator level. More precisely, we show that whenever the state cost extends to a differentiable function on a Hilbert space $\calH$ in which $\calX$ is densely embedded, as for $L^2$-tracking costs on Sobolev-type state spaces, the multiplier automatically lies in a subspace of $\calP$, the dual companion, on which $\calA'$ restricts to an isomorphism onto $\calH'$. This yields an optimality system with balanced regularity, where state and multiplier solve equations of the same type. In applications, the dual companion can be identified by integration by parts in time or space. We illustrate the framework by linear elliptic and parabolic equations, Cauchy problems governed by general $C_0$-semigroups, and a wave equation with accumulated-state observation.
\end{abstract}

\medskip

\noindent\textbf{Keywords.} Optimal control, optimality conditions, dual companion, evolution equations

\smallskip

\noindent\textbf{MSC 2020.} 49K27, 49N60, 49K20, 47D06, 47A05

\section{Introduction}
\noindent Optimal control problems subject to partial differential equations are a well-established field with a rich literature, see, e.g., the monographs \cite{lions1971optimal,troltzsch2010optimal,hinze2009optimization} and, for dynamical systems governed by operator semigroups, \cite{bensoussan2007representation,lasiecka2000control}. A convenient way to treat such problems in a unified manner is to write the state equation as an operator equation, cf.\ \cite{schiela2013concise}, which leads to the abstract problem
\begin{equation}\label{eq:OCP_intro}
	\min_{(x,u)\in\calX\times\calU}\ J(x)+\tfrac{\alpha}{2}\|u\|_\calU^2+\theta(u)\quad\mathrm{s.t.}\quad\calA x-\calB u=b,
\end{equation}
with a state cost $J:\calX\to\R$, a regularization parameter $\alpha>0$, a convex and possibly nonsmooth control cost $\theta:\calU\to\R\cup\{+\infty\}$, a control operator $\calB:\calU\to\calP'$ and an isomorphism $\calA:\calX\to\calP'$, where the state space $\calX$ is a Banach space, $\calU$ is a Hilbert space and the data space $\calP'$ is the dual space of a reflexive Banach space $\calP$. Here, $\calA$ comprises the differential equation together with its initial or boundary conditions, and its bounded invertibility encodes the well-posedness of the state equation. This formulation separates the analysis of the state equation from the optimization. For elliptic problems in weak form, $\calA$ is an isomorphism by the Lax--Milgram lemma, and in strong form by elliptic regularity, see, e.g., \cite[Thm.~3.2.1.2]{grisvard1985elliptic}. For parabolic problems, such isomorphisms are well known, both in the variational setting and, via maximal regularity, for generators of analytic semigroups, see, e.g., \cite[Part~II, Ch.~1 and~2]{bensoussan2007representation} and \cite{schiela2013concise}, and they are at the core of space-time discretizations of evolution equations, see, e.g., \cite{schwab2009space,steinbach2015space}, including ultraweak formulations of transport and wave equations, whose test spaces are tailored to the adjoint operator, see, e.g., \cite{dahmen2012adaptive,brunken2019parametrized,henning2022ultraweak}.

In this framework, first-order optimality conditions follow from standard arguments of convex analysis and yield a Lagrange multiplier $\popt\in\calP$, which satisfies the adjoint equation $\calA'\popt=-DJ(\xopt)$ with the Banach space adjoint $\calA'$ of $\calA$. This leads to an imbalance in regularity between state and multiplier. Whereas the state equation is posed in $\calP'$ and solved by the state $\xopt\in\calX$, the adjoint equation is posed in the dual space $\calX'$ and, a priori, the multiplier is only an element of $\calP$. For evolution equations, this means that the state solves a forward problem in a strong or mild sense, whereas the adjoint equation only holds in a very weak sense, i.e., in the sense of the method of transposition, cf. \cite[Ch.~2, Sec.~6]{lions1972nonhomogeneous} and \cite[Part~II, Ch.~2, Sec.~2]{bensoussan2007representation}, in which the multiplier neither possesses initial or terminal values nor is continuous in time. This is in contrast to the way optimality systems are used in practice, where the adjoint state is interpreted as the solution of an evolution equation posed backward in time, see, e.g., \cite{lions1971optimal,troltzsch2010optimal,trelat2018steady,grune2020exponential}. This interpretation is the basis of forward-backward solution methods, of the derivation of regularity and structural properties of optimal controls via projection formulas, and of error estimates for discretizations, in which state and adjoint are approximated in spaces of comparable regularity.

It is well known that, in many situations, the multiplier is indeed more regular than guaranteed a priori. For elliptic problems, the adjoint state solves a boundary value problem whose right-hand side inherits the regularity of the cost functional, such that elliptic regularity theory lifts the adjoint state beyond the form domain. Via the projection formula, this regularity carries over to the optimal control, which is a standard ingredient in the error analysis of discretized optimal control problems, see, e.g., \cite{troltzsch2010optimal,hinze2009optimization,meyer2004superconvergence,hinze2005variational}, and in the analysis of the structure of sparse optimal controls, see, e.g., \cite{stadler2009elliptic,casas2012optimality}. For evolution equations, the adjoint equation can be interpreted as an evolution equation posed backward in time. For linear parabolic equations in a variational setting, \cite{schiela2013concise} presents a concise proof of the bounded invertibility of the corresponding operator and of its adjoint and shows that the adjoint state can be lifted a posteriori into the solution space of the state, provided that the cost functional is sufficiently regular. However, all of these results rely on the particular structure of the respective equation, such as coercivity and elliptic regularity for elliptic problems, the Gelfand triplet structure of the variational formulation for parabolic problems, or semigroup arguments for hyperbolic problems. Thus, each class of equations is treated separately, and a general mechanism, which explains when and why the multiplier gains regularity, seems to be missing.

\smallskip
\noindent\textbf{Contribution and outline.} In this paper, we propose an operator-theoretic approach to this question, which covers elliptic, parabolic and hyperbolic equations as well as ODEs and does not depend on the particular structure of the state equation. It only requires that $\calA$ is an isomorphism and that the state cost is of the form $J=f\circ E$ with a differentiable function $f$ on a reflexive Banach space $\calH$, e.g., a Hilbert space, and a dense embedding $E:\calX\to\calH$. After a motivating example with linear ODEs in Section~\ref{sec:ode_example}, we introduce this setting in Section~\ref{sec:abstract_ocp} and derive first-order optimality conditions. Our main insight is that the additional regularity of the multiplier is not a property of the differential equation, but of the cost functional. Precisely, if $DJ(\xopt)$ is more regular than a general element of $\calX'$, then so is the solution of the adjoint equation. To make this precise, we introduce in Section~\ref{sec:balancing} the dual companion $(\Xd,\Ed,\Ad)$ of $(\calX,E,\calA)$, which consists of an embedding $\Ed$ into $\calP$ and an isomorphism $\Ad$ onto the dual space $\calH'$ satisfying $\calA'\Ed=E'\Ad$, see Proposition~\ref{prop:dual_companion}.
The dual companion is unique up to isomorphism, can be identified in applications by an integration by parts formula together with the bounded invertibility of a candidate operator, see Proposition~\ref{prop:companion_unique}, and is symmetric in the sense that the dual companion of the dual companion is, up to representation, the original triple, see Corollary~\ref{cor:companion_duality}. As a consequence, the multiplier is of the form $\popt=\Ed\lopt$ with $\lopt\in\Xd$, and state and adjoint state solve equations of the same type, see Theorem~\ref{thm:adjoint_regularity_general} and Figure~\ref{fig:balancing}. As the control cost only enters via its proximal operator, our results hold for general convex and possibly nonsmooth control costs, which include control constraints and sparsity-promoting costs.
In Section~\ref{sec:examples_ocp}, we illustrate the generality of the approach by four classes of examples. For linear elliptic PDEs in weak form, no imbalance occurs, but the dual companion recovers the well-known additional regularity of the adjoint state and of the optimal control. For linear parabolic PDEs in a variational setting, we recover the a posteriori lifting of the adjoint state of \cite{schiela2013concise}. For abstract Cauchy problems governed by general $C_0$-semigroups, which include linear ODEs as well as hyperbolic PDEs, we first show that the operator mapping a state to its residual and its initial value is an isomorphism, which extends the corresponding result for analytic semigroups in \cite[Part~II, Ch.~1, Thm.~3.1]{bensoussan2007representation}, and then identify the dual companion as the backward Cauchy problem governed by the adjoint generator, without any assumption on analyticity or maximal regularity. Finally, a wave equation with a cost functional acting on the accumulated state shows that the dual companion is not necessarily a standard backward evolution equation, but is determined by the structure of the cost functional. Section~\ref{sec:conclusion} concludes the paper.

\section{A motivating example: optimal control of linear ODEs}\label{sec:ode_example}
	We start with a motivating example, which illustrates the imbalance between state and multiplier in the optimality system and how it can be resolved. The same phenomenon occurs for abstract Cauchy problems governed by general $C_0$-semigroups as well as for parabolic and hyperbolic PDEs, cf.\ Section~\ref{sec:examples_ocp}. To keep the presentation simple, we restrict ourselves to linear ODEs in this section.

	Let $T>0$, $A\in\R^{n\times n}$, $B\in\R^{n\times m}$, $x_0\in\R^n$, $x_d\in L^2(0,T;\R^n)$ and $\alpha>0$, let $\varphi:\R^n\to\R$ be convex and differentiable, and let $U_\mathrm{ad}\subset\R^m$ be nonempty, closed and convex. We consider
	\begin{equation}\label{eq:OCP_ODE}
		\begin{aligned}
			\min_{(x,u)}\ & \frac12\int_0^T \|x(t)-x_d(t)\|_{\R^n}^2\,\mathrm{d}t+\varphi(x(T))+\frac\alpha2\int_0^T\|u(t)\|_{\R^m}^2\,\mathrm{d}t\\
			\mathrm{s.t.}\ & \dot x(t)=Ax(t)+Bu(t),\quad x(0)=x_0,\quad u(t)\in U_\mathrm{ad} \ \text{ for a.e. } t\in(0,T).
		\end{aligned}
	\end{equation}
	We choose the state space $\calX\coloneqq H^1(0,T;\R^n)$ and the control space $\calU\coloneqq L^2(0,T;\R^m)$. Identifying $\calP\coloneqq L^2(0,T;\R^n)\times\R^n$ with its dual space $\calP'$ via its inner product, we write the dynamics as the operator equation $\calA x-\calB u=(0,x_0)$ with
	\begin{equation*}
		\calA:\calX\to\calP',\quad \calA x=\stack{\dot x-Ax}{x(0)},\qquad\qquad \calB:\calU\to\calP',\quad \calB u=\stack{Bu}{0}.
	\end{equation*}
	By the variation of constants formula, $\calA$ is an isomorphism with inverse
	\begin{equation*}
		\calA^{-1}\stack{g}{\xi}=e^{A\cdot}\xi+\int_0^\cdot e^{A(\cdot-s)}g(s)\,\mathrm{d}s,\qquad \stack{g}{\xi}\in\calP'.
	\end{equation*}
	Hence, with $J(x)\coloneqq\frac12\|x-x_d\|^2_{L^2(0,T;\R^n)}+\varphi(x(T))$ and $\Uad\coloneqq\{u\in\calU: u(t)\in U_\mathrm{ad}\text{ for a.e. } t\in(0,T)\}$, problem \eqref{eq:OCP_ODE} is of the form \eqref{eq:OCP_intro}, i.e.,
	\begin{equation*}
		\min_{(x,u)\in \calX\times\calU} J(x)+\frac\alpha2\|u\|^2_{\calU}+\iota_{\Uad}(u)\quad \mathrm{s.t.}\quad \calA x-\calB u=\stack{0}{x_0},
	\end{equation*}
	where the control cost $\theta=\iota_{\Uad}$ is the \emph{indicator function} of $\Uad$. Here, the indicator function $\iota_C$ of a set $C$ is given by $\iota_C(u)=0$ for $u\in C$ and $\iota_C(u)=+\infty$ otherwise.

	Problem \eqref{eq:OCP_ODE} has a unique solution $(\xopt,\uopt)$, and standard Lagrange multiplier arguments, see also Theorem~\ref{thm:existence_optimal_sol} below, yield a multiplier $\popt=(\lopt,\lopt_0)\in\calP$ with $\calA'\popt=-DJ(\xopt)$ in $\calX'$, i.e.,
	\begin{equation}\label{eq:ODE_adjoint_weak}
		\int_0^T \langle \lopt(t), \dot h(t)-Ah(t)\rangle_{\R^n}\,\mathrm{d}t+\langle\lopt_0,h(0)\rangle_{\R^n} = -\int_0^T\langle \xopt(t)-x_d(t),h(t)\rangle_{\R^n}\,\mathrm{d}t-\langle\nabla\varphi(\xopt(T)),h(T)\rangle_{\R^n}
	\end{equation}
	for all $h\in\calX$, and $\uopt=P_{U_\mathrm{ad}}(\frac1\alpha B^\top\lopt)$, where the orthogonal projection $P_{U_\mathrm{ad}}$ onto $U_\mathrm{ad}$ is applied pointwise. Thus, the state equation is posed in $\calP'$ and solved by $\xopt\in\calX$, whereas the adjoint equation \eqref{eq:ODE_adjoint_weak} is posed in the dual space $\calX'$ and, a priori, only provides $\lopt\in L^2(0,T;\R^n)$.

	However, it is well-known that the multiplier enjoys higher regularity. Indeed, choosing $h\in C_c^\infty(0,T;\R^n)$ in \eqref{eq:ODE_adjoint_weak} shows that the adjoint state $\lopt$ has the weak derivative
	\begin{equation*}
		\dot{\lopt}=-A^\top\lopt+\xopt-x_d\in L^2(0,T;\R^n), \qquad \text{i.e.,}  \qquad   \lopt\in\calX.
	\end{equation*}
	Integrating by parts in \eqref{eq:ODE_adjoint_weak} for general $h\in\calX$ then yields $\lopt(T)=-\nabla\varphi(\xopt(T))$ and $\lopt_0=\lopt(0)$. Consequently, $\popt=(\lopt,\lopt(0))$, and the optimality system becomes the forward-backward system
	\begin{equation}\label{eq:ODE_fb}
		\left\{
		\begin{aligned}
			\dot{\xopt}&=A\xopt+B\uopt, && \xopt(0)=x_0,\\
			-\dot{\lopt}-A^\top\lopt&=-(\xopt-x_d), && \lopt(T)=-\nabla\varphi(\xopt(T)),\\
			\uopt&=P_{U_\mathrm{ad}}\big(\tfrac1\alpha B^\top\lopt\big) && \text{a.e.\ in }(0,T),
		\end{aligned}\right.
	\end{equation}
	in which state and adjoint state enjoy the same regularity, i.e., $\xopt,\lopt\in\calX$. In operator form, the adjoint equation reads $\Ad\lopt=(-(\xopt-x_d),-\nabla\varphi(\xopt(T)))$ with the backward operator
	\begin{equation*}
		\Ad:\calX\to L^2(0,T;\R^n)\times\R^n,\qquad \Ad h=\stack{-\dot h-A^\top h}{h(T)},
	\end{equation*}
	which, like $\calA$, is an isomorphism.

	The additional regularity of the multiplier does not stem from the particular structure of the ODE, but from the regularity of the state cost. Indeed, $J=f\circ E$ with the dense embedding
	\begin{equation*}
		E:\calX\to\calH\coloneqq L^2(0,T;\R^n)\times\R^n,\qquad Ex=\stack{x}{x(T)},
	\end{equation*}
	and $f(h,h_T)\coloneqq\frac12\|h-x_d\|^2_{L^2(0,T;\R^n)}+\varphi(h_T)$. By the chain rule, $DJ(\xopt)=E'Df(E\xopt)$ with $Df(E\xopt)=(\xopt-x_d,\nabla\varphi(\xopt(T)))\in\calH$, where we identify $\calH$ with its dual space. Hence, the right-hand side of the adjoint equation is more regular than a general element of $\calX'$, and so is its solution. In Section~\ref{sec:balancing}, we show that this mechanism carries over to optimal control problems governed by general operator equations whose state cost is of the form $J=f\circ E$ with a dense embedding $E$ of the state space.
\section{Problem setting and optimality conditions}\label{sec:abstract_ocp}
	In this section, we introduce the abstract setting of the optimal control problem \eqref{eq:OCP_intro} in Subsection~\ref{subsec:setting}, and we show existence of a unique optimal solution together with first-order optimality conditions in Subsection~\ref{subsec:optcond}. The resulting optimality system is the starting point for the balancing of regularity in Section~\ref{sec:balancing}.
\subsection{Abstract setting}\label{subsec:setting}
	We now consider an abstract setting which covers the example of Section~\ref{sec:ode_example} as well as evolution equations governed by $C_0$-semigroups, but does not rely on their particular structure. For Banach spaces $Y$ and $Z$, we denote the space of bounded linear operators from $Y$ to $Z$ by $L(Y,Z)$ and the \emph{Banach space adjoint} of $B\in L(Y,Z)$ by $B'\in L(Z',Y')$, defined by
	\begin{equation*}
		\langle B'z',y\rangle_{Y',Y} = \langle z',By\rangle_{Z',Z}, \qquad y\in Y,\quad z'\in Z'.
	\end{equation*}
	For a reflexive Banach space $Z$, we identify the bidual space $Z''$ with $Z$ via the canonical isometric isomorphism, such that $B'\in L(Z,Y')$ for $B\in L(Y,Z')$. To this end, let $\calX$ be a Banach space, let $\calP$ be a reflexive Banach space, let $\calU$ be a Hilbert space, which we identify with its dual via the Riesz isomorphism, let $\calA\in L(\calX,\calP')$ be an isomorphism, $\calB\in L(\calU,\calP')$ and $b\in\calP'$ an abstract datum. In particular, $\calA'\in L(\calP,\calX')$ and $\calB'\in L(\calP,\calU)$. Even if $\calP$ is a Hilbert space, we do not identify $\calP$ with its dual space, as the natural choice of $\calP$ depends on the application, e.g., $\calP=\calX$ for weak formulations of elliptic problems. We consider the optimal control problem
	\begin{equation}\label{eq:OCP}
		\min_{(x,u)\in \calX\times\calU} J(x) + \tfrac\alpha 2 \|u\|_{\calU}^2 +\theta(u) \quad \mathrm{s.t.} \quad \calA x- \calB u = b
	\end{equation}
	where
	\begin{enumerate}[label=(\roman*)]
		\item $J: \calX \to \R$ is a convex and Fr\'echet differentiable cost function,
		\item $\alpha > 0$ is a regularization parameter,
		\item $\theta:\calU\to\R\cup\{+\infty\}$ is a proper\footnote{The \textit{effective domain} of a function $\phi: \calU \to \R \cup \{+\infty\}$ is
		$\domain{\phi}\coloneqq \left\{u \in \calU \, \middle| \, \phi(u) < + \infty \right\}$ and $\phi$ is \textit{proper}, if $\domain{\phi}\neq \emptyset$.}, convex and lower semicontinuous\footnote{
		A function \(g:\calU\to\R\cup\{+\infty\}\) is called \emph{(weakly) lower semicontinuous (lsc)} if
		\begin{equation*}
			g(u)\leq \liminf_{n\to\infty} g(u_n) \, \qquad \forall (u_n)_{n \in \N} \subset \calU: u_n\to u (u_n \rightharpoonup u).
		\end{equation*}
	}, possibly nonsmooth control cost.
	\end{enumerate}
	Here, Fr\'echet differentiability means that, for every $x\in\calX$, there exists $DJ(x)\in\calX'$ such that
	\begin{equation*}
		\lim_{\|h\|_\calX\to0}\frac{| J(x+h)-J(x)-DJ(x)h|}{\|h\|_\calX}=0.
	\end{equation*}
	For instance, control constraints $u\in\Uad$ for a nonempty, closed and convex set $\Uad\subset\calU$ are covered by $\theta=\iota_{\Uad}$. For $\calU=L^2(0,T;\R^m)$, sparsity-promoting costs are covered by $\theta=\beta\|\cdot\|_{L^1(0,T;\R^m)}$ with $\beta>0$. For a proper, convex and lower semicontinuous function $\theta:\calU\to\R\cup\{+\infty\}$ and $\gamma>0$, the \emph{proximal operator} is given by
	\begin{equation*}
		\prox_{\gamma\theta}(z)\coloneqq\operatorname*{arg\,min}_{u\in\calU}\Big(\theta(u)+\tfrac{1}{2\gamma}\|u-z\|_\calU^2\Big),\qquad z\in\calU.
	\end{equation*}
	For the indicator function $\iota_C$ of a nonempty, closed and convex set $C\subset\calU$, we have $\prox_{\gamma\iota_C}=P_C$ with the \emph{orthogonal projection} $P_C:\calU\to C$ onto $C$, cf.\ \cite[Ex.~12.25]{BausComb2011}.
\subsection{Existence of optimal solutions and optimality conditions}\label{subsec:optcond}
To keep the article self-contained, we are now proving existence of optimal controls and provide first-order optimality conditions.
\begin{thrm}\label{thm:existence_optimal_sol}
	The optimal control problem \eqref{eq:OCP} has a unique solution $(\xopt, \uopt) \in \calX \times \calU$. Moreover, there exists a unique Lagrange multiplier $\popt \in \calP$ such that
	\begin{equation}\label{eq:optsys}
		\left\{
		\begin{aligned}
			\calA\xopt-\calB\uopt&=b &&\textup{in }\calP',\\
			\calA'\popt&=-DJ(\xopt) &&\textup{in }\calX',\\
			\uopt&=\prox_{\theta/\alpha}\big(\tfrac{1}{\alpha}\calB'\popt\big) &&\textup{in }\calU.
		\end{aligned}
		\right.
	\end{equation}
\end{thrm}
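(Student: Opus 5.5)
Since $\calA$ is an isomorphism, the plan is to eliminate the state and work with a reduced problem in the control only. For $u\in\calU$ we set $S u\coloneqq\calA^{-1}(\calB u+b)\in\calX$; this map is affine and continuous with linear part $\calA^{-1}\calB\in L(\calU,\calX)$. Problem \eqref{eq:OCP} is then equivalent to minimizing
\begin{equation*}
F(u)\coloneqq J(Su)+\tfrac\alpha2\|u\|_\calU^2+\theta(u)
\end{equation*}
over $\calU$, and $x=Su$ recovers the state.

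\textbf{Existence and uniqueness.} $J\circ S$ is convex and continuous, since $J$ is Fr\'echet differentiable and hence continuous. The term $\tfrac\alpha2\|\cdot\|^2$ is strongly convex, and $\theta$ is proper, convex and lsc. So $F$ is proper, lsc and $\alpha$-strongly convex on the Hilbert space $\calU$; properness uses $\domain{\theta}\neq\emptyset$ and the finiteness of $J$.

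Coercivity follows because a proper lsc convex $\theta$ admits a continuous affine minorant, $\theta(u)\ge\langle a,u\rangle+c$. Likewise $J\circ S$ is bounded below by its affine tangent at $0$, namely $J(S0)+DJ(S0)\calA^{-1}\calB u$. Hence $F(u)\ge\tfrac\alpha2\|u\|^2-C\|u\|-C'$.

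Weak lower semicontinuity holds since convex lsc functions are weakly lsc. By the direct method in the reflexive space $\calU$, a minimizer $\uopt$ exists. Strong convexity gives uniqueness, and then $\xopt\coloneqq S\uopt$ is unique.

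\textbf{Optimality condition.} Fermat's rule gives $0\in\partial F(\uopt)$. Since $u\mapsto J(Su)+\tfrac\alpha2\|u\|^2$ is convex and Fr\'echet differentiable on all of $\calU$, the subdifferential sum rule applies without further qualification (one summand is continuous everywhere). This yields
\begin{equation*}
0\in (\calA^{-1}\calB)'DJ(\xopt)+\alpha\uopt+\partial\theta(\uopt).
\end{equation*}
We then define $\popt\coloneqq-(\calA')^{-1}DJ(\xopt)\in\calP$. This is well defined because $\calA'\in L(\calP,\calX')$ is an isomorphism, being the adjoint of an isomorphism, with $\calP$ reflexive so that $\calP''=\calP$. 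By construction $\calA'\popt=-DJ(\xopt)$, and $\popt$ is unique since $\calA'$ is injective.

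\textbf{Prox form.} Using $(\calA^{-1}\calB)'=\calB'(\calA')^{-1}$, the inclusion becomes $\calB'\popt-\alpha\uopt\in\partial\theta(\uopt)$, equivalently
\begin{equation*}
\tfrac1\alpha\calB'\popt-\uopt\in\partial(\theta/\alpha)(\uopt).
\end{equation*}
This is exactly the optimality condition of the strongly convex prox problem defining $\prox_{\theta/\alpha}(\tfrac1\alpha\calB'\popt)$, which gives the third line of \eqref{eq:optsys}. The first line holds by definition of $S$.

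The main point needing care is the bookkeeping of adjoints under the identification $\calP''=\calP$, i.e., checking $(\calA^{-1})'=(\calA')^{-1}$ with these identifications. The coercivity and sum-rule steps are standard.
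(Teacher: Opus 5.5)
Your proposal is correct and follows essentially the same route as the paper. Both eliminate the state via $\calA^{-1}$, obtain a unique minimizer of the strongly convex reduced functional, apply Fermat's rule together with the sum and chain rules, define $\popt$ through the adjoint, and read off the prox characterization. The differences are only in level of detail: you prove existence directly via affine minorants and the direct method, where the paper cites Ekeland--Temam, and you write $(\calA')^{-1}$ where the paper writes $(\calA^{-1})'$, which coincide under the identification $\calP''=\calP$.
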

\begin{proof}
	Using the bijectivity of $\calA$, we equivalently rewrite the optimal control problem \eqref{eq:OCP} by the (unconstrained) optimization problem
	\begin{equation}\label{eq:composite}
		\min_{u \in \calU} g(u) \coloneqq J(\calA^{-1}(\calB u + b)) + \tfrac{\alpha}{2} \|u \|_\calU^2 + \theta(u)
	\end{equation}
	where $\theta$ denotes the nonsmooth part of the objective functional $g$. Recall that $J$ and $\theta$ are convex functions. Indeed, by $\alpha$-strong convexity of the control cost and as $g$ is proper and lower semicontinuous, \eqref{eq:composite} defines an $\alpha$-strongly convex optimization problem, which has a unique solution $\uopt \in \calU$, \cite[Chap.~II, Prop.~1.2]{EkelTema99}. Thus, $(\xopt, \uopt) \in \calX \times \calU$ where $\xopt \coloneqq \calA^{-1}(\calB \uopt + b)$ defines a unique solution of \eqref{eq:OCP}. We now deduce first-order optimality conditions associated with \eqref{eq:OCP}. For $\uopt \in \calU$ to be a global minimizer of \eqref{eq:composite}, it is necessary and sufficient that $0 \in \partial g(\uopt)$, \cite[Thm. 16.3]{BausComb2011}, where $\partial g(\uopt)$ denotes the subgradient\footnote{Let $g:\calU \to \R \cup \{+\infty\}$ be a convex lsc function. The \textit{subgradient} of $g$ at $u_0 \in \calU$ is the set-valued operator $\partial g : \calU \rightrightarrows \calU'$ with
	\begin{align*}
		\partial g(u_0) \coloneqq \left\{ v \in \calU' \,\middle|\, g(u)\geq g(u_0) +\langle v,u-u_0\rangle_{\calU',\calU} \ \text{ for all}\,u\in\calU \right\}.
	\end{align*}} of $g$ at $\uopt$. Precisely,
	\begin{equation}\label{eq:adjoint_eq}
		\uopt = \underset{u \in \calU}{\operatorname{arg \, min}}\  g \quad \text{if and only if} \quad 0 \in \partial g(\uopt).
	\end{equation}
	Because the effective domain of the mapping $u \mapsto J(\calA^{-1}(\calB u +b)) +  \tfrac{\alpha}{2} \|u \|_\calU^2 \eqqcolon K(u)$ satisfies $\domain{K}=\calU$, the sum rule for subgradients \cite[Cor.~16.48]{BausComb2011} yields
	\begin{equation}\label{eq:sum_of_subgr}
		\partial g(u)  = \partial  K(u) + \partial \theta(u).
	\end{equation}
	Note that the Fr\'echet derivative $\nabla K(u) \in L(\calU,\R)$ of $K$ exists at each $u \in \calU$. Hence, $\partial K(u)=\{\nabla K(u)\}$ by \cite[Prop.~17.31]{BausComb2011}. Define the \textit{control-to-state} map $S : \calU \to \calX$ by $S(u)\coloneqq \calA^{-1}(\calB u + b)$.
	Then, by the chain rule for Fr\'echet derivatives, for any direction $h\in \calU$,
	\begin{equation*}
		DK(u)[h] = DJ(S(u)) \bigl[\calA^{-1}\calB h\bigr] + \alpha \langle u,h\rangle_{\calU}.
	\end{equation*}
	Hence
	\begin{align*}
		DK(u) =(\calA^{-1}\calB)' DJ(S(u))+\alpha u =\calB'(\calA^{-1})'DJ(S(u))+ \alpha u.
	\end{align*}
	Thus, by a combination of \eqref{eq:adjoint_eq} and \eqref{eq:sum_of_subgr}, the optimal control $\uopt \in \calU$ satisfies
	\begin{equation*}
		-\tfrac{1}{\alpha} \calB'(\calA^{-1})'
		DJ(S(\uopt)) - \uopt \in \tfrac{1}{\alpha}\partial \theta(\uopt)=\partial \tfrac{1}{\alpha}\theta(\uopt).
	\end{equation*}
	As a consequence, $\uopt$ fulfills the fixed-point equation
	\begin{equation*}
		\uopt = (I+\partial  \tfrac{1}{\alpha}\theta)^{-1} \left(-\tfrac{1}{\alpha} \calB'(\calA^{-1})' DJ(S(\uopt))\right) = \prox_{\theta/\alpha}\left(-\tfrac{1}{\alpha} \calB'(\calA^{-1})' DJ(S(\uopt))\right) ,
	\end{equation*}
	see \cite[Prop. 16.44]{BausComb2011}. Define $\popt \in \calP$ by
	\begin{equation*}
		\popt \coloneqq - (\calA^{-1})' DJ(S(\uopt)) = -(\calA^{-1})' DJ(\xopt),
	\end{equation*}
	where $(\calA^{-1})'\in L(\calX',\calP)$ by the reflexivity of $\calP$, and note that this is equivalent to the second line of \eqref{eq:optsys}. Consequently,
	\begin{equation}\label{eq:optimal_u}
		\uopt=\prox_{\theta/\alpha}\left(\tfrac{1}{\alpha}\calB'  \popt\right),
	\end{equation}
	which is the third line of \eqref{eq:optsys}.
    Finally, the first line of \eqref{eq:optsys} is the state equation in \eqref{eq:OCP}.
\end{proof}
\section{Balancing regularity via dual companions}\label{sec:balancing}
	The optimality system \eqref{eq:optsys} involves the Banach space adjoint $\calA':\calP\to\calX'$. Hence, the adjoint equation is posed in the dual space $\calX'$, whereas the state equation is posed in the data space $\calP'$. In this section, we show that this imbalance can be removed if the state cost enjoys additional regularity, i.e., if $J=f\circ E$ for a dense embedding $E$ of $\calX$ into a reflexive Banach space $\calH$ and a Fr\'echet differentiable function $f$ on $\calH$. In this case, the adjoint equation can be expressed by the Banach space adjoint of an unbounded realization of $\calA$ in $\calH$. For evolution equations, this yields the well-known interpretation of the adjoint equation as an evolution equation posed backward in time, see, e.g., \cite{troltzsch2010optimal} and \cite[Sec.~3.2]{schiela2013concise}.

	For the reader's convenience, we briefly recall some notions concerning unbounded operators and embeddings. Let $Y$ and $Z$ be Banach spaces. A linear, possibly unbounded operator $A:Y\supset\dom A\to Z$ is {\em closed} if its {\em domain} $\dom A$ is complete with respect to the {\em graph norm}
	\[\|y\|_{\dom A}\coloneqq\big(\|y\|_Y^2+\|Ay\|_Z^2\big)^{1/2}.\]
	If $A$ is densely defined, its \textit{Banach space adjoint} $A':Z'\supset\dom{A'}\to Y'$ is defined on
	\[
	\dom{A'}=\left\lbrace z'\in Z' : \exists\, y'\in Y' \text{ s.t.\ }\forall\,y\in\dom A:\;\langle z',Ay\rangle_{Z',Z}=\langle y',y\rangle_{Y',Y} \right\rbrace
	\]
	by $A'z'=y'$, where the density of $\dom A$ in $Y$ implies that $y'$ is uniquely determined by $z'$. The operator $A'$ is closed, and if $Y$ and $Z$ are reflexive and $A$ is closed, then $\dom{A'}$ is dense in $Z'$, see \cite[Thm.~III.5.29]{kato2013perturbation}.

	If $E\in L(Y,Z)$ is injective with dense range, then $E'\in L(Z',Y')$ is injective as well, since $E'z'=0$ implies $\langle z',Ey\rangle_{Z',Z}=0$ for all $y\in Y$ and hence $z'=0$ by the density of $E(Y)$ in $Z$. We then write $Y\hookrightarrow^{E}Z$ and $Z'\hookrightarrow^{E'}Y'$. If $Z$ is a Hilbert space, identified with its dual space via the Riesz isomorphism, this yields the \emph{Gelfand triplet} $Y\hookrightarrow^{E}Z\hookrightarrow^{E'}Y'$ with pivot space $Z$.

    The following result is the key step towards balancing the optimality system. It shows that an adjoint equation $\calA'\popt=-DJ(\xopt)$, which is a priori posed in $\calX'$, can be rewritten as an equation in $\calH'$ whenever its right-hand side lies in $E'(\calH')$, as is the case for $J=f\circ E$. This equation is governed by the Banach space adjoint of the realization $\calA_E$ of $\calA$ in $\calH$, and the multiplier belongs to the domain of $\calA_E'$. Hence, the regularity of the multiplier is determined by $\dom{\calA_E'}$, which can be identified in applications by integration by parts, cf.\ Proposition~\ref{prop:companion_unique}.
\begin{lmm}\label{lem:schiela_38_equiv}
	Let $\calX$ be a Banach space, let $\calP$ and $\calH$ be reflexive Banach spaces, let $E\in L(\calX,\calH)$ be injective with dense range, i.e., $\calX\hookrightarrow^{E}\calH$ and $\calH'\hookrightarrow^{E'}\calX'$, and let $\calA\in L(\calX,\calP')$ be an isomorphism. Define the possibly unbounded operator $\calA_E:\calH\supset\dom{\calA_E}\to\calP'$ by
	\begin{equation*}
		\dom{\calA_E} = E(\calX), \qquad \calA_E (Ex) = \calA x.
	\end{equation*}
	Then $\calA_E$ is well-defined, densely defined and closed, and its Banach space adjoint $\calA_E':\calP\supset\dom{\calA_E'}\to\calH'$ is well-defined. Moreover, for $p\in\calP$, the following statements are equivalent:
	\begin{enumerate}[label=(\roman*)]
		\item $\calA'p\in E'(\calH')$, i.e., there exists $\eta\in\calH'$ such that
		\begin{equation}\label{eq:range_restr}
			\calA' p = E' \eta,
		\end{equation}
		\item $p \in \dom{\calA_E'}$.
	\end{enumerate}
	In this case,
	\begin{equation}\label{eq:calA_adjoint}
		\calA_E' p = (E')^{-1}\calA'p,
	\end{equation}
	where $(E')^{-1}: E'(\calH') \to \calH'$ denotes the inverse of $E':\calH' \to E'(\calH')$.
\end{lmm}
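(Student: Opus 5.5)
We first establish the structural properties of $\calA_E$ and then verify the equivalence directly from the definition of the Banach space adjoint.

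\emph{Well-definedness.} Since $E$ is injective, every $y\in\dom{\calA_E}=E(\calX)$ has a unique preimage $x=E^{-1}y$. Hence $\calA_E y\coloneqq\calA E^{-1}y$ is well defined, and it is clearly linear.

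\emph{Density.} The domain $\dom{\calA_E}=E(\calX)$ is dense in $\calH$ by assumption.

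\emph{Closedness.} We show that the graph norm on $E(\calX)$ is equivalent to the norm transported from $\calX$. For $x\in\calX$,
\[
\|Ex\|_{\dom{\calA_E}}^2=\|Ex\|_\calH^2+\|\calA x\|_{\calP'}^2 .
\]
This is bounded above by $(\|E\|^2+\|\calA\|^2)\|x\|_\calX^2$. It is bounded below by $\|\calA^{-1}\|^{-2}\|x\|_\calX^2$, because $\calA$ is an isomorphism. Thus $E:\calX\to(\dom{\calA_E},\|\cdot\|_{\dom{\calA_E}})$ is a bijective isomorphism, and completeness of $\calX$ transfers to completeness of $\dom{\calA_E}$ in the graph norm, which is the definition of closedness used above. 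Since $\calA_E$ is densely defined, its adjoint $\calA_E':\calP''=\calP\supset\dom{\calA_E'}\to\calH'$ is well defined by the preceding discussion, using the identification $\calP''=\calP$ from reflexivity.

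\emph{The equivalence.} By definition, $p\in\dom{\calA_E'}$ holds iff there is $\eta\in\calH'$ with $\langle p,\calA_E y\rangle=\langle\eta,y\rangle_{\calH',\calH}$ for all $y\in E(\calX)$. Substituting $y=Ex$, this reads
\[
\langle p,\calA x\rangle_{\calP,\calP'}=\langle \eta,Ex\rangle_{\calH',\calH}\quad\text{for all }x\in\calX .
\]
The left side equals $\langle\calA'p,x\rangle_{\calX',\calX}$ and the right side equals $\langle E'\eta,x\rangle_{\calX',\calX}$. So the condition is exactly $\calA'p=E'\eta$ in $\calX'$, which is statement (i). This gives (i)$\Leftrightarrow$(ii) with the same $\eta$, namely $\calA_E'p=\eta$.

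\emph{The formula.} Since $E'$ is injective (shown above from the density of $E(\calX)$), $\eta$ is uniquely determined by $\eta=(E')^{-1}\calA'p$, which is \eqref{eq:calA_adjoint}.

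The only step requiring care is bookkeeping of duality pairings under the identification $\calP''=\calP$. In particular, one must check that $\calA'\in L(\calP,\calX')$ acts via $\langle\calA'p,x\rangle=\langle \calA x,p\rangle_{\calP',\calP}$, which is consistent with the adjoint convention for unbounded $\calA_E$. No other step should present a genuine obstacle.
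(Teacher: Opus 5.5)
Your proof is correct and follows the paper's argument. The equivalence is obtained by the same unwinding of the definition of $\calA_E'$ with the substitution $y=Ex$, which you write as a single chain of equivalences instead of two separate directions, and the formula likewise follows from the injectivity of $E'$. The only variation is closedness: you use the two-sided graph-norm bound to show that $E$ is an isomorphism from $\calX$ onto $\dom{\calA_E}$, whereas the paper checks closedness sequentially via $x_n=\calA^{-1}\calA x_n\to\calA^{-1}z$. Your version is equally valid and also yields the norm equivalence as a by-product.
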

\begin{proof}
	Let $x_1,x_2\in\calX$ with $Ex_1=Ex_2$. As $E$ is injective, this implies $x_1=x_2$ and hence $\calA x_1=\calA x_2$, i.e., $\calA_E$ is well-defined. Moreover, $\calA_E$ is densely defined, as $E(\calX)$ is dense in $\calH$. To show that $\calA_E$ is closed, let $(x_n)_{n\in\N}\subset\calX$ with
	\begin{equation*}
		Ex_n\to h \quad\text{in }\calH, \qquad \calA_E(Ex_n)=\calA x_n\to z \quad\text{in }\calP'.
	\end{equation*}
	As $\calA$ is an isomorphism, $x_n=\calA^{-1}\calA x_n\to\calA^{-1}z$ in $\calX$, and the continuity of $E$ yields $Ex_n\to E\calA^{-1}z$ in $\calH$. Hence, $h=E\calA^{-1}z\in\dom{\calA_E}$ and $\calA_E h=\calA\calA^{-1}z=z$, i.e., $\calA_E$ is closed. As $\calA_E$ is densely defined and $\calP''=\calP$ by reflexivity, its Banach space adjoint $\calA_E':\calP\supset\dom{\calA_E'}\to\calH'$ is well-defined.

	We now show that (ii) implies (i). Let $p\in\dom{\calA_E'}$ and $\eta\coloneqq\calA_E'p\in\calH'$. By definition of the adjoint, $\langle\calA_E g,p\rangle_{\calP',\calP}=\langle\eta,g\rangle_{\calH',\calH}$ for all $g\in E(\calX)$. Choosing $g=Ex$ yields
	\begin{equation*}
		\langle E'\eta, x \rangle_{\calX', \calX}=\langle \eta, Ex \rangle_{\calH',\calH}=  \langle \calA_E Ex, p  \rangle_{\calP',\calP} = \langle \calA  x, p \rangle_{\calP',\calP} = \langle \calA'p, x \rangle_{\calX',\calX}
	\end{equation*}
	for all $x \in \calX$, i.e., \eqref{eq:range_restr} holds.

	Conversely, let $\calA'p = E'\eta$ for some $\eta \in \calH'$. Then
	\begin{equation*}
		\langle \calA_E Ex, p  \rangle_{\calP',\calP} = \langle \calA x, p \rangle_{\calP',\calP} = \langle \calA'p, x \rangle_{\calX',\calX} =   \langle E'\eta, x \rangle_{\calX', \calX}=\langle \eta, Ex \rangle_{\calH',\calH}
	\end{equation*}
	for all $x \in \calX$, i.e., $p \in \dom{\calA_E'}$ with $\calA_E'p=\eta$.

	In both cases, we have $\calA_E'p=\eta$ and $\calA'p=E'\eta$, such that \eqref{eq:calA_adjoint} follows from the injectivity of $E'$. This yields the assertion.
\end{proof}
	Lemma~\ref{lem:schiela_38_equiv} shows that an adjoint equation $\calA'p=E'\eta$ with right-hand side in $E'(\calH')$ can equivalently be written as $\calA_E'p=\eta$ with the unbounded operator $\calA_E'$. Thus, the domain of $\calA_E'$ plays the role of a solution space for adjoint equations with such regular right-hand sides. This motivates the following construction of the dual companion associated with a Banach space $\calX$, an embedding $E$ and an isomorphism $\calA$.
\begin{prpstn}[Dual companions]\label{prop:dual_companion}
	Let the assumptions of Lemma~\Rref{lem:schiela_38_equiv} hold, and let $\calA_E: \calH \supset \dom{\calA_E} \to \calP'$ be the operator defined therein. Define the space
	\begin{equation*}
		\Xd \coloneqq \dom{\calA_E'}= \left\{p \in \calP : \calA' p \in \ran{E'} \right\} , \qquad \| \lambda \|_{\Xd}^2 \coloneqq \| \lambda \|_\calP^2 + \| \calA_E'\lambda \|_{\calH'}^2
	\end{equation*}
	together with the embedding
	\begin{equation*}
		\Ed : \Xd \hookrightarrow \calP , \qquad \Ed \lambda = \lambda
	\end{equation*}
	and the operator
	\begin{equation*}
		\Ad : \Xd \to \calH', \qquad \Ad \lambda = (E')^{-1} \calA' \lambda.
	\end{equation*}
	Then the following statements hold:
	\begin{enumerate}[label=(\roman*)]
		\item $\Xd$ is a Banach space. If, in addition, $\calH$ and $\calP$ are Hilbert spaces, then so is $\Xd$.
		\item $\Ed$ is continuous and injective with dense range.
		\item $\Ad$ is an isomorphism.
		\item The identity
		\begin{equation}\label{eq:dual_companion_identity}
			\calA' \Ed = E'\Ad : \Xd \to \calX'
		\end{equation}
		holds.
	\end{enumerate}
	We call the triple $(\Xd, \Ed,\Ad)$ the \textit{dual companion of $(\calX, E,\calA)$}.
\end{prpstn}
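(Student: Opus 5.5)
We prove the four items in turn, using Lemma~\ref{lem:schiela_38_equiv} throughout: it gives $\calA_E$ closed and densely defined, identifies $\Xd=\dom{\calA_E'}=\{p\in\calP:\calA'p\in\ran{E'}\}$, and yields $\calA_E'p=(E')^{-1}\calA'p$. Hence $\Ad=\calA_E'$ as maps on $\Xd$, and the norm on $\Xd$ is exactly the graph norm of $\calA_E'$.

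For (i), the Banach space adjoint of a densely defined operator is closed. This is the general fact recalled before the lemma, and it can also be checked directly: if $p_n\to p$ in $\calP$ and $\calA_E'p_n\to\eta$ in $\calH'$, then $\calA'p_n\to\calA'p$ in $\calX'$ and $E'\calA_E'p_n\to E'\eta$. Since $\calA'p_n=E'\calA_E'p_n$, we get $\calA'p=E'\eta$, so $p\in\Xd$ and $\calA_E'p=\eta$. Closedness means that the graph norm is complete, so $\Xd$ is a Banach space. If $\calH$ and $\calP$ are Hilbert spaces, then so is $\calH'$, and the norm comes from the inner product $(\lambda,\mu)_\calP+(\calA_E'\lambda,\calA_E'\mu)_{\calH'}$.

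For (ii), $\Ed$ is the inclusion, so it is injective, and $\|\lambda\|_\calP\le\|\lambda\|_{\Xd}$ gives continuity. Density of $\Xd$ in $\calP$ follows from the cited result \cite[Thm.~III.5.29]{kato2013perturbation}: $\calA_E$ is closed and densely defined between the reflexive spaces $\calH$ and $\calP'$ (the latter reflexive because $\calP$ is), so $\dom{\calA_E'}$ is dense in $\calP''=\calP$.

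For (iv), by definition $E'\Ad\lambda=E'(E')^{-1}\calA'\lambda=\calA'\lambda=\calA'\Ed\lambda$, which is immediate.

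The main step is (iii). Boundedness is clear, since $\|\Ad\lambda\|_{\calH'}\le\|\lambda\|_{\Xd}$.

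For injectivity, suppose $\Ad\lambda=0$. Then $\calA'\lambda=E'0=0$, and $\calA'$ is injective because $\calA$ is an isomorphism, so $\lambda=0$.

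For surjectivity, take $\eta\in\calH'$ and set $p\coloneqq(\calA')^{-1}E'\eta=(\calA^{-1})'E'\eta\in\calP$; this is well defined since $\calA'$ is an isomorphism $\calP\to\calX'$. Then $\calA'p=E'\eta\in\ran{E'}$, so $p\in\Xd$ and $\Ad p=\eta$.

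Bounded inverse follows either from the open mapping theorem, now that $\Xd$ is Banach, or directly from the estimate
\[
\|p\|_\calP\le\|(\calA^{-1})'\|\,\|E'\|\,\|\eta\|_{\calH'},
\]
which gives $\|p\|_{\Xd}\le C\|\eta\|_{\calH'}$. The only subtle points in the whole argument are the density claim in (ii), which relies on reflexivity through the cited theorem, and the verification that $\Ad$ coincides with $\calA_E'$ so that completeness transfers.
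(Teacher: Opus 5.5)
Your proof is correct and follows the paper's argument essentially step by step: closedness of $\calA_E'$ for (i), the norm inequality plus Kato's density theorem for (ii), the one-line computation for (iv), and for (iii) boundedness via $\Ad=\calA_E'$, injectivity via injectivity of $\calA'$, and surjectivity via $(\calA')^{-1}E'\eta$. Your direct check of closedness and your explicit estimate $\|p\|_{\Xd}\le C\|\eta\|_{\calH'}$ are harmless extras. They make the argument slightly more self-contained than the paper's appeal to the general closedness fact and to the bounded inverse theorem.
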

\begin{proof}
	As the Banach space adjoint of a densely defined operator, $\calA_E'$ is closed. Hence, its domain $\Xd$ endowed with the graph norm is a Banach space, and the graph norm is induced by an inner product if $\calH$ and $\calP$ are Hilbert spaces. This shows (i).

	As $\|\lambda\|_\calP\leq\|\lambda\|_{\Xd}$ for all $\lambda\in\Xd$, the inclusion $\Ed$ is continuous and injective. Moreover, $\calA_E$ is densely defined and closed by Lemma~\ref{lem:schiela_38_equiv}, and $\calH$ and $\calP'$ are reflexive. Hence, $\Xd=\dom{\calA_E'}$ is dense in $\calP$, see \cite[Thm.~III.5.29]{kato2013perturbation}, which shows (ii).

	For every $\lambda\in\Xd$, the definition of $\Ad$ yields
	\begin{equation*}
		E'\Ad \lambda = E'(E')^{-1}\calA'\lambda = \calA'\lambda=\calA'\Ed \lambda,
	\end{equation*}
	which proves (iv).

	It remains to show (iii). By \eqref{eq:calA_adjoint}, we have $\Ad \lambda=\calA_E'\lambda$ and hence $\|\Ad \lambda\|_{\calH'}\leq\|\lambda\|_{\Xd}$ for all $\lambda\in\Xd$, i.e., $\Ad$ is bounded. As $\calA$ is an isomorphism, so is $\calA'\in L(\calP,\calX')$. If $\Ad \lambda=0$, then $\calA'\lambda=E'\Ad \lambda=0$ by (iv), and hence $\lambda=0$, i.e., $\Ad$ is injective. For $\eta\in\calH'$, the element $\lambda\coloneqq(\calA')^{-1}E'\eta\in\calP$ satisfies $\calA'\lambda=E'\eta\in\ran{E'}$, i.e., $\lambda\in\Xd$, and $\Ad \lambda=\eta$. Hence, $\Ad$ is bijective with $(\Ad)^{-1}=(\calA')^{-1}E'$, and the bounded inverse theorem yields that $\Ad$ is an isomorphism. This yields the assertion.
\end{proof}
We now show that the construction of dual companions applied to the optimality conditions \eqref{eq:optsys} yields an optimality system with balanced regularity, that is, the state and the adjoint state $\lopt$, which represents the multiplier via $\popt=\Ed\lopt$, enjoy the same kind of regularity.
\begin{thrm}\label{thm:adjoint_regularity_general}
	Consider the optimal control problem \eqref{eq:OCP} in the setting of Section~\Rref{sec:abstract_ocp} and let $E:\calX\hookrightarrow \calH$ be a continuous and injective embedding with dense range into a reflexive Banach space $\calH$. Suppose that the state cost satisfies
	\begin{equation}\label{eq:general_cost_embedding}
		J(x)=f(Ex),
	\end{equation}
	for some Fr\'echet differentiable function $f:\calH\to\R$ with derivative $Df(h)\in\calH'$ at $h\in\calH$. Let $(\Xd, \Ed,\Ad)$ be the dual companion associated with $(\calX, E,\calA)$. Then, $(\xopt,\uopt,\popt)\in\calX\times\calU\times\calP$
	satisfies the optimality system \eqref{eq:optsys} if and only if
	\begin{equation*}
		\popt\in \Ed(\Xd), \qquad \text{i.e.,}  \qquad \popt=\Ed\lopt
		\qquad
		\text{for a unique }\lopt\in\Xd,
	\end{equation*}
	and $(\xopt,\uopt,\lopt)\in\calX\times\calU\times\Xd$ satisfies
	\begin{equation}\label{eq:general_balanced_optsys_direct}
		\left\{
		\begin{aligned}
			\calA\xopt-\calB\uopt &=b &&\textup{in }\calP',\\
			\Ad\lopt &=- Df(E\xopt) &&\textup{in }\calH',\\
			\uopt&=\prox_{\theta/\alpha}\big(\tfrac{1}{\alpha}\calB'\Ed\lopt\big) &&\textup{in }\calU.
		\end{aligned}
		\right.
	\end{equation}
\end{thrm}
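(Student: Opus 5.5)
The plan is to first rewrite the derivative of the state cost via the chain rule, and then prove the two directions of the equivalence using Lemma~\ref{lem:schiela_38_equiv} and the identity \eqref{eq:dual_companion_identity} of Proposition~\ref{prop:dual_companion}. Since $J=f\circ E$ with $E\in L(\calX,\calH)$ and $f$ Fr\'echet differentiable, the chain rule gives
\begin{equation*}
	DJ(x)=E'Df(Ex)\qquad\text{for all } x\in\calX .
\end{equation*}
Thus the right-hand side of the adjoint equation in \eqref{eq:optsys} lies in $\ran{E'}$ at every state, and in particular at $\xopt$.

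For the forward direction, assume $(\xopt,\uopt,\popt)$ satisfies \eqref{eq:optsys}. Then
\begin{equation*}
	\calA'\popt=-DJ(\xopt)=E'\bigl(-Df(E\xopt)\bigr)\in\ran{E'},
\end{equation*}
so by Lemma~\ref{lem:schiela_38_equiv}, (i)$\Rightarrow$(ii), we have $\popt\in\dom{\calA_E'}=\Xd$. Set $\lopt\coloneqq\popt$, so that $\popt=\Ed\lopt$. This $\lopt$ is unique because $\Ed$ is injective by Proposition~\ref{prop:dual_companion}(ii). By the definition of $\Ad$ together with the injectivity of $E'$,
\begin{equation*}
	\Ad\lopt=(E')^{-1}\calA'\popt=-Df(E\xopt).
\end{equation*}
The state equation in \eqref{eq:general_balanced_optsys_direct} is literally the same as in \eqref{eq:optsys}, and the control equation follows by substituting $\popt=\Ed\lopt$.

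For the converse, let $(\xopt,\uopt,\lopt)$ satisfy \eqref{eq:general_balanced_optsys_direct} and put $\popt\coloneqq\Ed\lopt$. Applying \eqref{eq:dual_companion_identity} gives
\begin{equation*}
	\calA'\popt=\calA'\Ed\lopt=E'\Ad\lopt=-E'Df(E\xopt)=-DJ(\xopt),
\end{equation*}
which is the adjoint equation in $\calX'$. The remaining two lines of \eqref{eq:optsys} are immediate from those of \eqref{eq:general_balanced_optsys_direct}.

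No real obstacle is expected here, because the substance has already been isolated in Lemma~\ref{lem:schiela_38_equiv}. The only points that need care are the identification $DJ(\xopt)=E'Df(E\xopt)$, which uses the reflexivity convention so that $Df(h)\in\calH'$ pairs correctly with $E$, and the fact that uniqueness of $\lopt$ rests solely on the injectivity of $\Ed$. One may also note that, combined with Theorem~\ref{thm:existence_optimal_sol}, this shows that the unique multiplier automatically lies in $\Ed(\Xd)$.
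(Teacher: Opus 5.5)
Your proposal is correct and follows essentially the same route as the paper: the chain rule $DJ(x)=E'Df(Ex)$, then Lemma~\ref{lem:schiela_38_equiv} to place $\popt$ in $\Xd=\dom{\calA_E'}$ with $\Ad\lopt=(E')^{-1}\calA'\popt=-Df(E\xopt)$, and the identity \eqref{eq:dual_companion_identity} for the converse. As a minor aside, the chain rule step does not need reflexivity of $\calH$; that assumption enters in the construction of the dual companion, not here.
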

\begin{proof}
	Let $(\xopt,\uopt,\popt)$ satisfy \eqref{eq:optsys}. By \eqref{eq:general_cost_embedding} and the chain rule for Fr\'echet derivatives,
	\begin{equation*}
		DJ(x)=E'Df(Ex), \qquad x\in\calX.
	\end{equation*}
	Hence, the second line of \eqref{eq:optsys} yields
	\begin{equation*}
		\calA'\popt =
		-E'Df(E\xopt).
	\end{equation*}
	In particular, $\calA'\popt\in\ran{E'}$. By Lemma~\ref{lem:schiela_38_equiv}, this is equivalent to $ \popt\in\dom{\calA_E'} = \Ed(\Xd)$, and
	\begin{equation*}
		\calA_E'\popt= -Df(E\xopt).
	\end{equation*}
	Since $\Ed: \Xd \to \dom{\calA_E'}$ is an isomorphism, there exists a unique $\lopt\in\Xd$ such that $ \popt=\Ed\lopt$.
	Consequently,
	\begin{equation*}
		\Ad\lopt = \calA_E'\Ed\lopt = -Df(E\xopt),
	\end{equation*}
	which is the second line of \eqref{eq:general_balanced_optsys_direct}. The first and third lines of \eqref{eq:general_balanced_optsys_direct} follow from those of \eqref{eq:optsys} and $\popt=\Ed\lopt$.
	Conversely, let $\lopt\in\Xd$ be such that $(\xopt,\uopt,\lopt)$ satisfies \eqref{eq:general_balanced_optsys_direct} and set $\popt\coloneqq \Ed\lopt$. Then \eqref{eq:dual_companion_identity} yields
	\begin{equation*}
		\calA'\popt=E'\Ad\lopt=-E'Df(E\xopt)=-DJ(\xopt),
	\end{equation*}
	which is the second line of \eqref{eq:optsys}. The first and third lines of \eqref{eq:optsys} follow from those of \eqref{eq:general_balanced_optsys_direct} and $\popt=\Ed\lopt$. This yields the assertion.
\end{proof}
\begin{figure}[t]
\centering
\begin{minipage}[b]{0.41\textwidth}\centering
\begin{tikzpicture}[
    >={Stealth[length=2mm]},
    every node/.style={font=\small},
    sp/.style={draw, rounded corners=3pt, minimum width=1.05cm, minimum height=0.75cm, align=center, thick, fill=gray!7},
    st/.style={blue!70!black, very thick},
    ad/.style={red!70!black, very thick},
    lab/.style={font=\scriptsize, align=center},
    dual/.style={densely dotted, gray!60!black, thick},
    scale=0.85, transform shape]
\useasboundingbox (-2.6,-1.6) rectangle (5.0,4.4);
\coordinate (X) at (0,0);
\draw[dashed, blue!70!black, thick, fill=blue!10] (X) ellipse (1.0 and 0.5);
\node at ($(X)+(-0.5,0.1)$) {$\calX$};
\fill[blue!70!black] ($(X)+(0.2,-0.15)$) circle (1.4pt) node[right, lab] {$\xopt$};
\node[sp] (Pd) at (0,3.2) {$\calP'$};
\draw[->, st] ($(X)+(0,0.5)$) -- node[left, lab, blue!70!black] {$\calA$\\ state\\ equation} (Pd.south);
\coordinate (P) at (3.4,3.2);
\draw[thick, fill=red!3] (P) ellipse (1.4 and 0.8);
\node at ($(P)+(1.1,0.8)$) {$\calP$};
\fill[red!70!black] ($(P)+(0,-0.1)$) circle (1.4pt) node[right, lab] {$\popt$};
\node[sp] (Xp) at (3.4,0) {$\calX'$};
\draw[->, ad] ($(P)+(0,-0.8)$) -- node[right, lab, red!70!black] {$\calA'$\\ adjoint\\ equation} (Xp.north);
\draw[dual] ($(X)+(1.0,0)$) -- node[below, lab, gray!60!black] {dual} (Xp.west);
\draw[dual] (Pd.east) -- node[above, lab, gray!60!black] {dual} ($(P)+(-1.4,0)$);
\end{tikzpicture}\\[1mm]
{\small (a) Unbalanced regularity}
\end{minipage}\hfill
\begin{minipage}[b]{0.57\textwidth}\centering
\begin{tikzpicture}[
    >={Stealth[length=2mm]},
    every node/.style={font=\small},
    sp/.style={draw, rounded corners=3pt, minimum width=1.05cm, minimum height=0.75cm, align=center, thick, fill=gray!7},
    st/.style={blue!70!black, very thick},
    ad/.style={red!70!black, very thick},
    lab/.style={font=\scriptsize, align=center},
    dual/.style={densely dotted, gray!60!black, thick},
    scale=0.85, transform shape]
\useasboundingbox (-3.4,-1.6) rectangle (7.0,4.4);
\coordinate (H) at (0,0);
\draw[thick, fill=blue!4] (H) ellipse (1.5 and 0.85);
\node at ($(H)+(-1.15,0.8)$) {$\calH$};
\draw[dashed, blue!70!black, thick, fill=blue!10] (H) ellipse (1.05 and 0.52);
\node[lab] at ($(H)+(0,0.18)$) {$E(\calX)$};
\fill[blue!70!black] ($(H)+(-0.25,-0.18)$) circle (1.4pt) node[right, lab] {$E\xopt$};
\node[sp, fill=blue!7] (X) at (-2.6,0) {$\calX$};
\draw[->, thick] (X.east) -- node[below, lab] {$E$ \ \ \ } ($(H)+(-1.05,0)$);
\node[sp] (Pd) at (0,3.2) {$\calP'$};
\draw[->, st] ($(H)+(0,0.52)$) -- node[right, lab, blue!70!black, pos=0.5] {$\calA_E$\\ state\\ equation} (Pd.south);
\draw[->, blue!70!black, thick] (X.north) to[out=90, in=180] node[right, lab, blue!70!black, pos=0.55] {$\calA=\calA_E E$} (Pd.west);
\coordinate (P) at (3.4,3.2);
\draw[thick, fill=red!3] (P) ellipse (1.5 and 0.85);
\node at ($(P)+(1.2,0.8)$) {$\calP$};
\draw[dashed, red!70!black, thick, fill=red!8] (P) ellipse (1.05 and 0.52);
\node[lab] at ($(P)+(0,0.18)$) {$\Ed(\Xd)$};
\fill[red!70!black] ($(P)+(-0.6,-0.2)$) circle (1.4pt) node[right, lab] {$\popt=\Ed\lopt$};
\node[sp, fill=red!6] (Xd) at (6.2,3.2) {$\Xd$};
\draw[->, thick] (Xd.west) -- node[below, lab] {\ \ \ $\Ed$} ($(P)+(1.05,0)$);
\node[sp] (Hd) at (3.4,0) {$\calH'$};
\draw[->, ad] ($(P)+(0,-0.52)$) -- node[left, lab, red!70!black, pos=0.5] {$\calA_E'$\\ adjoint\\ equation} (Hd.north);
\draw[->, red!70!black, thick] (Xd.south) to[out=-90, in=0] node[below right, lab, red!70!black, pos=0.7] {$\Ad=\calA_E'\Ed$} (Hd.east);
\draw[dual] ($(H)+(1.5,0)$) -- node[below, lab, gray!60!black] {dual} (Hd.west);
\draw[dual] (Pd.east) -- node[above, lab, gray!60!black] {dual} ($(P)+(-1.5,0)$);
\end{tikzpicture}\\[1mm]
{\small (b) Balanced regularity}
\end{minipage}
\caption{Optimality system with unbalanced (a) and balanced (b) regularity.}
\label{fig:balancing}
\end{figure}

	The space $\Xd$ in Proposition~\ref{prop:dual_companion} is defined abstractly as a subspace of $\calP$. In applications, the main task is to identify $\Xd$ with a concrete solution space, e.g., of a backward evolution equation. The following result shows that this only requires an identity of the form \eqref{eq:dual_companion_identity} for a candidate triple, see \eqref{eq:companion_identity_Y} below, which typically follows from integration by parts, together with the bounded invertibility of the candidate operator.
	\begin{prpstn}[Representations of the dual companion]\label{prop:companion_unique}
		Let the assumptions of Proposition~\Rref{prop:dual_companion} hold and let $(\Xd,\Ed,\Ad)$ be the dual companion of $(\calX,E,\calA)$. Let $\tXd$ be a Banach space, let $\tEd:\tXd\to\calP$ be linear and let $\tAd:\tXd\to\calH'$ be an isomorphism. Then
		\begin{equation}\label{eq:companion_identity_Y}
			\calA'\tEd = E'\tAd
		\end{equation}
		holds if and only if there exists an isomorphism $R:\tXd\to\Xd$ with $\tEd=\Ed R$ and $\tAd=\Ad R$. In this case, $R=(\Ad)^{-1}\tAd$, $\tEd(\tXd)=\Xd=\dom{\calA_E'}$ and $\calA_E'\tEd=\tAd$, and $\tEd$ is continuous and injective. We then call $(\tXd,\tEd,\tAd)$ a \emph{representation} of the dual companion.
	\end{prpstn}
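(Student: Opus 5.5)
We prove the two implications separately. Throughout we use Proposition~\ref{prop:dual_companion}, especially identity \eqref{eq:dual_companion_identity} and the formula $(\Ad)^{-1}=(\calA')^{-1}E'$ from its proof, together with Lemma~\ref{lem:schiela_38_equiv}.

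\emph{The ``if'' direction.} Assume $R:\tXd\to\Xd$ is an isomorphism with $\tEd=\Ed R$ and $\tAd=\Ad R$. Then \eqref{eq:dual_companion_identity} gives
\begin{equation*}
\calA'\tEd=\calA'\Ed R=E'\Ad R=E'\tAd,
\end{equation*}
which is \eqref{eq:companion_identity_Y}.

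\emph{The ``only if'' direction.} Assume \eqref{eq:companion_identity_Y} holds and set $R\coloneqq(\Ad)^{-1}\tAd:\tXd\to\Xd$. It is an isomorphism because both $\tAd$ and $\Ad$ are. By construction $\Ad R=\tAd$. To show $\tEd=\Ed R$, fix $y\in\tXd$. Using \eqref{eq:companion_identity_Y} and then \eqref{eq:dual_companion_identity},
\begin{equation*}
\calA'\tEd y=E'\tAd y=E'\Ad Ry=\calA'\Ed Ry.
\end{equation*}
Since $\calA$ is an isomorphism, so is $\calA'\in L(\calP,\calX')$, and its injectivity yields $\tEd y=\Ed Ry$.

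\emph{Consequences.} The relation $\tEd=\Ed R$, with $R$ surjective onto $\Xd$ and $\Ed$ the inclusion, gives $\tEd(\tXd)=\Xd=\dom{\calA_E'}$. Since $\Ed$ is continuous and injective and $R$ is an isomorphism, the composition $\tEd=\Ed R$ is continuous and injective; in particular, continuity of $\tEd$ need not be assumed. Finally, by \eqref{eq:calA_adjoint} we have $\calA_E'\Ed=\Ad$ on $\Xd$, so
\begin{equation*}
\calA_E'\tEd=\calA_E'\Ed R=\Ad R=\tAd.
\end{equation*}

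\emph{Uniqueness of $R$.} If $R$ is any isomorphism with $\tAd=\Ad R$, then $R=(\Ad)^{-1}\tAd$, so $R$ is forced to take this form.

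The only step that needs care is recovering $\tEd=\Ed R$ when $\tEd$ is merely linear. This works because of the injectivity of $\calA'$, not any continuity of $\tEd$. Every other step is direct composition of the identities above.
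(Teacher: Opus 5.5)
Your proof is correct and follows essentially the same route as the paper's. The only difference is in the ``only if'' direction. You define $R=(\Ad)^{-1}\tAd$ first and deduce $\tEd=\Ed R$ from the injectivity of $\calA'$, whereas the paper first uses Lemma~\ref{lem:schiela_38_equiv} to place $\tEd y$ in $\dom{\calA_E'}$ and then invokes the injectivity of $\Ad$; both arguments amount to the identity $(\Ad)^{-1}=(\calA')^{-1}E'$.
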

	\begin{proof}
		If $R:\tXd\to\Xd$ is an isomorphism with $\tEd=\Ed R$ and $\tAd=\Ad R$, then \eqref{eq:dual_companion_identity} yields $\calA'\tEd=\calA'\Ed R=E'\Ad R=E'\tAd$. Conversely, let \eqref{eq:companion_identity_Y} hold and let $y\in\tXd$. Then $\calA'\tEd y=E'\tAd y\in\ran{E'}$. Hence, Lemma~\ref{lem:schiela_38_equiv} yields $\tEd y\in\dom{\calA_E'}=\Xd$ and
		\begin{equation*}
			\Ad\tEd y=\calA_E'\tEd y=(E')^{-1}\calA'\tEd y=\tAd y.
		\end{equation*}
		Since $\Ad$ is bijective by Proposition~\ref{prop:dual_companion}, this implies $\tEd y=(\Ad)^{-1}\tAd y=Ry$ with $R\coloneqq(\Ad)^{-1}\tAd$, i.e., $\tEd=\Ed R$ and $\tAd=\Ad R$. As $\tAd$ and $(\Ad)^{-1}$ are isomorphisms, so is $R$. In particular, $\tEd(\tXd)=R(\tXd)=\Xd$, and $\tEd=\Ed R$ is continuous and injective. Finally, any isomorphism $R$ with $\tAd=\Ad R$ is given by $R=(\Ad)^{-1}\tAd$. This yields the assertion.
	\end{proof}
	Thus, the dual companion itself is canonical, whereas its representation in terms of a concrete solution space depends on the identification of $\Xd$.
The construction of the dual companion is symmetric in the sense that the original triple is recovered, up to representation, by forming the dual companion twice.
\begin{crllr}[Duality of dual companions]\label{cor:companion_duality}
	Let the assumptions of Proposition~\Rref{prop:dual_companion} hold and let $(\Xd,\Ed,\Ad)$ be the dual companion of $(\calX,E,\calA)$. Then $(\Xd,\Ed,\Ad)$ satisfies the assumptions of Proposition~\Rref{prop:dual_companion} with the roles of $\calH$ and $\calP$ interchanged, and its dual companion $((\Xd)^{\mathrm d},(\Ed)^{\mathrm d},(\Ad)^{\mathrm d})$ is given by
	\begin{equation*}
		(\Xd)^{\mathrm d}=E(\calX)=\dom{\calA_E},\qquad (\Ed)^{\mathrm d}h=h,\qquad (\Ad)^{\mathrm d}=\calA_E.
	\end{equation*}
	Moreover, $E:\calX\to(\Xd)^{\mathrm d}$ is an isomorphism satisfying
	\begin{equation*}
		E=(\Ed)^{\mathrm d}E,\qquad \calA=(\Ad)^{\mathrm d}E,
	\end{equation*}
	i.e., $(\calX,E,\calA)$ is a representation of the dual companion of $(\Xd,\Ed,\Ad)$ in the sense of Proposition~\Rref{prop:companion_unique}.
\end{crllr}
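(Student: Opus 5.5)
First I check that $(\Xd,\Ed,\Ad)$ meets the hypotheses of Proposition~\ref{prop:dual_companion} with roles swapped. Concretely, the new "state space" is $\Xd$ (a Banach space by Proposition~\ref{prop:dual_companion}(i)). The new "embedding" is $\Ed:\Xd\to\calP$, which is continuous and injective with dense range by (ii), and $\calP$ is reflexive. The new "isomorphism" is $\Ad:\Xd\to\calH'$ by (iii), and it maps onto the dual of the reflexive space $\calH$. So $\calP$ plays the role of $\calH$ and $\calH$ plays the role of $\calP$. Lemma~\ref{lem:schiela_38_equiv} and Proposition~\ref{prop:dual_companion} then apply and produce a dual companion. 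Its space is
\[
(\Xd)^{\mathrm d}=\{h\in\calH:(\Ad)'h\in\ran{(\Ed)'}\},
\]
with $(\Ad)'\in L(\calH,(\Xd)')$, $(\Ed)'\in L(\calP',(\Xd)')$, and $(\Ad)^{\mathrm d}h=((\Ed)')^{-1}(\Ad)'h$.

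Rather than computing this space directly, I will use Proposition~\ref{prop:companion_unique} with the candidate triple $(\calX,E,\calA)$. Here $\calX$ is Banach, $E:\calX\to\calH$ is linear, and $\calA:\calX\to\calP'$ is an isomorphism, so it only remains to check the identity $(\Ad)'E=(\Ed)'\calA$ in $L(\calX,(\Xd)')$. For this I test against $\lambda\in\Xd$ and use \eqref{eq:dual_companion_identity}:
\[
\langle (\Ad)'Ex,\lambda\rangle=\langle \Ad\lambda,Ex\rangle_{\calH',\calH}=\langle E'\Ad\lambda,x\rangle=\langle\calA'\Ed\lambda,x\rangle=\langle\calA x,\Ed\lambda\rangle_{\calP',\calP}=\langle(\Ed)'\calA x,\lambda\rangle .
\]
This uses the reflexive identifications $\calH''=\calH$ and $\calP''=\calP$, which I will state carefully. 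Proposition~\ref{prop:companion_unique} then gives an isomorphism $R=((\Ad)^{\mathrm d})^{-1}\calA:\calX\to(\Xd)^{\mathrm d}$ with $E=(\Ed)^{\mathrm d}R$ and $\calA=(\Ad)^{\mathrm d}R$. It also gives $E(\calX)=(\Xd)^{\mathrm d}$.

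Next I identify the objects explicitly. Since $(\Ed)^{\mathrm d}$ is the inclusion $(\Xd)^{\mathrm d}\subset\calH$, the relation $E=(\Ed)^{\mathrm d}R$ says $Rx=Ex$. Hence $(\Xd)^{\mathrm d}=E(\calX)=\dom{\calA_E}$, and $(\Ed)^{\mathrm d}h=h$. Then $(\Ad)^{\mathrm d}(Ex)=\calA x=\calA_E(Ex)$, so $(\Ad)^{\mathrm d}=\calA_E$, and $E:\calX\to(\Xd)^{\mathrm d}$ is the isomorphism $R$. The norms agree up to equivalence, since the graph norm on $(\Xd)^{\mathrm d}$ makes $E=R$ an isomorphism. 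This is precisely the statement that $(\calX,E,\calA)$ is a representation.

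The only delicate point is the bookkeeping of duals and biduals in the role swap. I must make sure that $(\Ad)'$ is viewed as an operator on $\calH$ and $(\Ed)'$ as an operator on $\calP'$ via reflexivity, so that the definitions in Lemma~\ref{lem:schiela_38_equiv} are matched correctly. The computation itself is routine.
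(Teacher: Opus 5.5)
Your proposal is correct and takes the same route as the paper's proof. You verify the role-swapped hypotheses via Proposition~\ref{prop:dual_companion}(i)--(iii) and reflexivity, apply Proposition~\ref{prop:companion_unique} to the candidate $(\calX,E,\calA)$ after checking $(\Ad)'E=(\Ed)'\calA$ through the chain of pairings built on \eqref{eq:dual_companion_identity}, and then read off $R=E$ and $(\Ad)^{\mathrm d}=\calA_E$ from the fact that $(\Ed)^{\mathrm d}$ is the inclusion into $\calH$.
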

\begin{proof}
	By Proposition~\ref{prop:dual_companion}, $\Xd$ is a Banach space, $\Ed:\Xd\to\calP$ is continuous and injective with dense range, and $\Ad:\Xd\to\calH'$ is an isomorphism. Hence, $(\Xd,\Ed,\Ad)$ satisfies the assumptions of Proposition~\ref{prop:dual_companion} with $\calH$ and $\calP$ interchanged, where we use that both spaces are reflexive.  We apply Proposition~\ref{prop:companion_unique} with $(\Xd,\Ed,\Ad)$ in place of $(\calX,E,\calA)$, with $\calH$ and $\calP$ interchanged, and with the candidate $(\calX,E,\calA)$ in place of $(\tXd,\tEd,\tAd)$. Here, $E:\calX\to\calH$ is linear and $\calA:\calX\to\calP'$ is an isomorphism, as required. For $x\in\calX$ and $\lambda\in\Xd$, the identity \eqref{eq:dual_companion_identity} yields
    \begin{equation*}
        \begin{array}{rclclcl}
             \langle(\Ad)'Ex,\lambda\rangle_{(\Xd)',\Xd}&=&\langle \Ad \lambda,Ex\rangle_{\calH',\calH}&=&\langle E'\Ad \lambda,x\rangle_{\calX',\calX} &&\\
		&=&\langle\calA'\Ed \lambda,x\rangle_{\calX',\calX}&=&\langle\calA x,\Ed \lambda\rangle_{\calP',\calP}&=&\langle(\Ed)'\calA x,\lambda\rangle_{(\Xd)',\Xd},
        \end{array}
    \end{equation*}
	i.e.,
    \begin{equation*}
        (\Ad)'E=(\Ed)'\calA : \calX \to (\Xd)' ,
    \end{equation*}
    which is the identity \eqref{eq:companion_identity_Y} in this setting. Hence, Proposition~\ref{prop:companion_unique} yields $E(\calX)=(\Xd)^{\mathrm d}$ and that $R\coloneqq((\Ad)^{\mathrm d})^{-1}\calA:\calX\to(\Xd)^{\mathrm d}$ is an isomorphism with $E=(\Ed)^{\mathrm d}R$ and $\calA=(\Ad)^{\mathrm d}R$. Since $(\Ed)^{\mathrm d}$ is the inclusion of $(\Xd)^{\mathrm d}$ into $\calH$, the first identity yields $R=E$, and the second one yields $(\Ad)^{\mathrm d}(Ex)=\calA x=\calA_E(Ex)$ for all $x\in\calX$, i.e., $(\Ad)^{\mathrm d}=\calA_E$.
\end{proof}

\section{Applications}\label{sec:examples_ocp}

	In this section, we illustrate the abstract framework by four classes of examples. In contrast to the motivating example of Section~\ref{sec:ode_example}, where the additional regularity was derived by hand, we now obtain it from Theorem~\ref{thm:adjoint_regularity_general} by identifying the dual companion via Proposition~\ref{prop:companion_unique}. We consider linear elliptic PDEs in weak form in Subsection~\ref{subsec:elliptic}, linear parabolic PDEs in the variational setting of \cite{schiela2013concise} in Subsection~\ref{subsec:parabolic}, abstract Cauchy problems governed by general $C_0$-semigroups, which contain the example of Section~\ref{sec:ode_example}, in Subsection~\ref{subsec:cauchy}, and a wave equation with a cost functional acting on the accumulated state in Subsection~\ref{subsec:wave}.

	\subsection{Linear elliptic PDEs in weak form}\label{subsec:elliptic}
	We start with a stationary problem, for which the additional regularity of the adjoint state and, consequently, of the optimal control is well known, see, e.g., \cite{troltzsch2010optimal,hinze2009optimization}. In the following, we show that this regularity is a direct consequence of our general theory. In the weak formulation considered here, the data space is the dual of the state space, i.e., $\calP=\calX$. Hence, in contrast to evolution equations, no imbalance between state and multiplier occurs, and the dual companion identifies the additional regularity of the adjoint state in space beyond the form domain.

	Let $\Omega\subset\R^d$ be a bounded Lipschitz domain, let $\omega\subset\Omega$ be open, and let $K\in L^\infty(\Omega)^{d\times d}$ be symmetric and uniformly positive definite, $w\in W^{1,\infty}(\Omega)^d$ and $c\in L^\infty(\Omega)$ with $c-\tfrac12\operatorname{div}w\geq0$ a.e.\ in $\Omega$. With the dual space $H^{-1}(\Omega)\coloneqq H^1_0(\Omega)'$, we define $L,L^*:H^1_0(\Omega)\to H^{-1}(\Omega)$ by $\langle Ly,v\rangle=a(y,v)$ and $\langle L^*p,v\rangle=a(v,p)$ with the bilinear form
	\begin{equation*}
		a(y,v)\coloneqq\int_\Omega K\nabla y\cdot\nabla v+(w\cdot\nabla y)\,v+c\,y\,v\,\mathrm{d}x,\qquad y,v\in H^1_0(\Omega),
	\end{equation*}
	i.e., $Ly=-\operatorname{div}(K\nabla y)+w\cdot\nabla y+cy$ and $L^*p=-\operatorname{div}(K\nabla p)-w\cdot\nabla p+(c-\operatorname{div}w)p$ in the sense of distributions. As $\int_\Omega(w\cdot\nabla y)\,y\,\mathrm{d}x=-\tfrac12\int_\Omega(\operatorname{div}w)\,y^2\,\mathrm{d}x$ for $y\in H^1_0(\Omega)$, the form $a$ is coercive by the Poincar\'e inequality, and the Lax--Milgram lemma shows that $L$ and $L^*$ are isomorphisms. Moreover, $L^*$ restricts to an isomorphism from the maximal domain
	\begin{equation*}
		\dom{L^*}\coloneqq\left\{p\in H^1_0(\Omega):L^*p\in L^2(\Omega)\right\},\qquad \|\lambda\|_{\dom{L^*}}\coloneqq\|\lambda\|_{H^1(\Omega)}+\|L^*\lambda\|_{L^2(\Omega)},
	\end{equation*}
	onto $L^2(\Omega)$. If $\Omega$ is convex and $K\in W^{1,\infty}(\Omega)^{d\times d}$, then $\dom{L^*}=H^2(\Omega)\cap H^1_0(\Omega)$ with equivalent norms by elliptic regularity, see \cite[Thm.~3.2.1.2]{grisvard1985elliptic}.

	\smallskip

	\noindent \textbf{The optimal control problem.} For $y_d\in L^2(\Omega)$, $b\in H^{-1}(\Omega)$, $\alpha>0$ and a proper, convex and lower semicontinuous function $\theta_0:\R\to\R\cup\{+\infty\}$, we consider
	\begin{equation}\label{eq:OCP_elliptic}
		\min_{(y,u)\in H^1_0(\Omega)\times L^2(\omega)}\ \tfrac12\|y-y_d\|^2_{L^2(\Omega)}+\tfrac\alpha2\|u\|^2_{L^2(\omega)}+\int_\omega\theta_0(u(x))\,\mathrm{d}x\quad\mathrm{s.t.}\quad Ly=\calB u+b,
	\end{equation}
	where $\calB:L^2(\omega)\to H^{-1}(\Omega)$ denotes the extension by zero. This is \eqref{eq:OCP} with $\calX=\calP=H^1_0(\Omega)$, $\calP'=H^{-1}(\Omega)$, $\calU=L^2(\omega)$, $\calA=L$ and
	\begin{equation*}
		\theta(u)=\int_\omega\theta_0(u(x))\,\mathrm{d}x,
	\end{equation*}
	which is proper, convex and lower semicontinuous on $L^2(\omega)$, see \cite[Cor.~to Thm.~2]{rockafellar1968integrals}. Here, $\calA'=L^*$ and $\calB'p=p|_\omega$, as
    \begin{equation*}
        \langle\calA'p,h\rangle=\langle Lh,p\rangle=a(h,p)=\langle L^*p,h\rangle
    \end{equation*}
    for $h,p\in H^1_0(\Omega)$. Hence, Theorem~\ref{thm:existence_optimal_sol} yields a unique solution $(\yopt,\uopt)$ and a unique multiplier $\popt\in H^1_0(\Omega)$ such that
	\begin{equation}\label{eq:elliptic_weak_adjoint}
		a(h,\popt)=-\langle\yopt-y_d,h\rangle_{L^2(\Omega)}\qquad\forall\,h\in H^1_0(\Omega),\qquad \uopt=\prox_{\theta/\alpha}\big(\tfrac1\alpha\popt|_\omega\big).
	\end{equation}
	Thus, state and multiplier are weak solutions of Dirichlet problems in the same form domain $H^1_0(\Omega)$, i.e., the optimality system has balanced regularity from the outset.

	\smallskip

	\noindent\textbf{Dual companion and additional regularity.} Although no balancing is required here, the dual companion shows that the regularity of the cost functional lifts the multiplier $\popt$ beyond the form domain, i.e., $\popt=\lopt$ with an adjoint state $\lopt\in\dom{L^*}$.
	 To this end, we choose $E:H^1_0(\Omega)\to\calH\coloneqq L^2(\Omega)$, $Ey\coloneqq y$, which is continuous and injective with dense range, and identify $\calH$ with its dual space via the $L^2$-inner product. Then $J=f\circ E$ with $f(h)=\tfrac12\|h-y_d\|^2_{L^2(\Omega)}$ and $Df(h)=h-y_d\in\calH'$.
	\begin{prpstn}\label{prop:elliptic_companion}
		The dual companion of $(H^1_0(\Omega),E,L)$ is given by $\Xd=\dom{L^*}$, the inclusion $\Ed$ of $\dom{L^*}$ into $H^1_0(\Omega)$ and $\Ad=L^*:\dom{L^*}\to L^2(\Omega)$.
	\end{prpstn}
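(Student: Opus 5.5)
We apply Proposition~\ref{prop:companion_unique} with the candidate triple $\tXd\coloneqq\dom{L^*}$, $\tEd$ the inclusion of $\dom{L^*}$ into $H^1_0(\Omega)=\calP$, and $\tAd\coloneqq L^*:\dom{L^*}\to L^2(\Omega)=\calH'$. Two things must be checked: that $\tXd$ is a Banach space on which $\tAd$ is an isomorphism, and that the identity $\calA'\tEd=E'\tAd$ of \eqref{eq:companion_identity_Y} holds. Proposition~\ref{prop:companion_unique} then gives an isomorphism $R=(\Ad)^{-1}\tAd$ with $\tEd=\Ed R$. Since both $\tEd$ and $\Ed$ are inclusions into $\calP$, this forces $R$ to be the identity, $\Xd=\dom{L^*}$ as sets with equivalent norms, and $\Ad=L^*$. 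That is the assertion.

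\textbf{The identity.} For $\lambda\in\dom{L^*}$ and $h\in H^1_0(\Omega)$ we compute
\begin{equation*}
\langle\calA'\tEd\lambda,h\rangle=a(h,\lambda)=\langle L^*\lambda,h\rangle_{H^{-1},H^1_0}=\langle L^*\lambda,h\rangle_{L^2(\Omega)}=\langle E'L^*\lambda,h\rangle_{\calX',\calX}.
\end{equation*}
The third equality holds because $L^*\lambda\in L^2(\Omega)$, and the $H^{-1}$--$H^1_0$ pairing of an $L^2$ function with $h$ is its $L^2$ inner product; this is exactly how $E'$ embeds $L^2(\Omega)\cong\calH'$ into $H^{-1}(\Omega)$. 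Thus the identity is only a matter of unwinding the identifications, and no integration by parts is needed.

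\textbf{The isomorphism property.} Completeness of $\dom{L^*}$ with the norm $\|\lambda\|_{H^1}+\|L^*\lambda\|_{L^2}$ follows from closedness. If $\lambda_n\to\lambda$ in $H^1_0$ and $L^*\lambda_n\to g$ in $L^2$, then $L^*\lambda_n\to L^*\lambda$ in $H^{-1}$ by continuity of $L^*$. Hence $L^*\lambda=g\in L^2$, which gives $\lambda\in\dom{L^*}$ and convergence in the graph norm.

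Boundedness of $L^*$ on $\dom{L^*}$ is immediate from the definition of the norm. Injectivity is inherited from the Lax--Milgram isomorphism $L^*:H^1_0\to H^{-1}$. For surjectivity, take $g\in L^2(\Omega)$ and view it in $H^{-1}$; the Lax--Milgram preimage $\lambda=(L^*)^{-1}g$ then lies in $\dom{L^*}$ by definition. The bounded inverse theorem then yields the isomorphism. Alternatively, the inverse bound $\|\lambda\|_{H^1}\le C\|L^*\lambda\|_{H^{-1}}\le C'\|L^*\lambda\|_{L^2}$ gives it directly.

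\textbf{Main obstacle.} The only delicate point is keeping the dualities consistent. We must verify that $\calA'=L^*$ under the identification $\calP=H^1_0$, which was shown before the proposition. We must also check that $E'$ corresponds to the canonical embedding $L^2\hookrightarrow H^{-1}$ once $\calH$ is identified with $\calH'$ via the $L^2$ inner product. Everything else is routine. As a remark, Theorem~\ref{thm:adjoint_regularity_general} then yields $\popt\in\dom{L^*}$, which equals $H^2(\Omega)\cap H^1_0(\Omega)$ under the stated convexity and regularity assumptions.
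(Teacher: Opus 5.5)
Your proposal is correct and follows essentially the same route as the paper. You verify $\calA'\tEd=E'\tAd$ by unwinding $a(h,\lambda)=\langle L^*\lambda,h\rangle_{L^2(\Omega)}$ and then invoke Proposition~\ref{prop:companion_unique} with the candidate $(\dom{L^*},\text{inclusion},L^*)$. Beyond the paper, you also prove that $L^*:\dom{L^*}\to L^2(\Omega)$ is a Banach-space isomorphism and that $R$ is the identity, whereas the paper takes the isomorphism property from the discussion preceding the proposition.
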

	\begin{proof}
		For $h\in H^1_0(\Omega)$ and $\lambda\in\dom{L^*}$, we have
		\begin{equation*}
			(\calA'\lambda)(h)=a(h,\lambda)=\langle L^*\lambda,h\rangle_{L^2(\Omega)}=\int_\Omega h\,L^*\lambda\,\mathrm{d}x=(E'L^*\lambda)(h),
		\end{equation*}
		i.e., the identity \eqref{eq:companion_identity_Y} holds with $\tXd=\dom{L^*}$, $\tEd$ the inclusion into $H^1_0(\Omega)$ and $\tAd=L^*$. As $L^*:\dom{L^*}\to L^2(\Omega)$ is an isomorphism, Proposition~\ref{prop:companion_unique} yields the assertion.
	\end{proof}
	\begin{crllr}\label{cor:elliptic_optsys}
		The triple $(\yopt,\uopt,\popt)$ satisfies \eqref{eq:optsys} if and only if $\popt=\lopt$ for some $\lopt\in\dom{L^*}$ with
		\begin{equation}\label{eq:optsys_elliptic}
			\left\{
			\begin{aligned}
				L\yopt&=\calB\uopt+b &&\textup{in }H^{-1}(\Omega),\\
				L^*\lopt&=-(\yopt-y_d) &&\textup{in }L^2(\Omega),\\
				\uopt(x)&=\prox_{\theta_0/\alpha}\big(\tfrac1\alpha\lopt(x)\big) &&\textup{for a.e. }x\in\omega.
			\end{aligned}
			\right.
		\end{equation}
		In particular, \eqref{eq:optsys_elliptic} has a unique solution, and $\lopt\in H^2(\Omega)\cap H^1_0(\Omega)$ if $\Omega$ is convex and $K\in W^{1,\infty}(\Omega)^{d\times d}$. If, in addition, $d\leq3$, then $\uopt$ has a representative in $C(\overline\omega)$.
	\end{crllr}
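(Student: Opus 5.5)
The plan is to apply Theorem~\ref{thm:adjoint_regularity_general} with the dual companion identified in Proposition~\ref{prop:elliptic_companion}, and then to read off the extra regularity from elliptic regularity and Sobolev embeddings.

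\textbf{Step 1: the equivalence.} The setting is that of Section~\ref{sec:abstract_ocp} with $\calX=\calP=H^1_0(\Omega)$, $\calA=L$, and $J=f\circ E$, where $E$ is the dense embedding into $\calH=L^2(\Omega)$ and $Df(h)=h-y_d$. Theorem~\ref{thm:adjoint_regularity_general} then says that \eqref{eq:optsys} holds if and only if $\popt=\Ed\lopt$ for a unique $\lopt\in\Xd$ solving \eqref{eq:general_balanced_optsys_direct}. By Proposition~\ref{prop:elliptic_companion}, $\Xd=\dom{L^*}$, $\Ed$ is the inclusion and $\Ad=L^*$. Hence $\popt=\lopt$, and the second line of \eqref{eq:general_balanced_optsys_direct} reads $L^*\lopt=-(\yopt-y_d)$ in $L^2(\Omega)$. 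Since $\calB'\lopt=\lopt|_\omega$, the third line becomes $\uopt=\prox_{\theta/\alpha}(\frac1\alpha\lopt|_\omega)$.

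\textbf{Step 2: the pointwise prox formula.} This is the only genuinely new ingredient. I have to show that, for the integral functional $\theta(u)=\int_\omega\theta_0(u)\,\mathrm{d}x$ on $L^2(\omega)$,
\begin{equation*}
	\prox_{\theta/\alpha}(z)(x)=\prox_{\theta_0/\alpha}(z(x))\quad\text{for a.e. }x\in\omega.
\end{equation*}
The argument runs as follows.
\begin{enumerate}[label=(\alph*)]
	\item The map $s\mapsto\prox_{\theta_0/\alpha}(s)$ is nonexpansive, hence measurable, so $v(x)\coloneqq\prox_{\theta_0/\alpha}(z(x))$ defines a measurable function.
	\item Pick $s_0\in\domain{\theta_0}$ and set $q=\prox_{\theta_0/\alpha}(s_0)$. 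Nonexpansiveness gives $|v(x)-q|\le|z(x)-s_0|$, so $v\in L^2(\omega)$ because $\omega$ is bounded.
	\item For every $u\in L^2(\omega)$ and a.e.\ $x$, the scalar minimizing property gives $\theta_0(v(x))+\frac\alpha2|v(x)-z(x)|^2\le\theta_0(u(x))+\frac\alpha2|u(x)-z(x)|^2$.
	\item Integrating this inequality shows that $v$ minimizes the functional defining $\prox_{\theta/\alpha}(z)$. Some care is needed that the integrals are well defined. The negative part of $\theta_0$ is bounded below by an affine minorant, which is integrable on the bounded set $\omega$, so the integrals make sense.
	\item By uniqueness of the minimizer, $v=\prox_{\theta/\alpha}(z)$.
\end{enumerate}
This gives the third line of \eqref{eq:optsys_elliptic}.

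\textbf{Step 3: uniqueness and regularity.} Unique solvability of \eqref{eq:optsys_elliptic} follows from Theorem~\ref{thm:existence_optimal_sol} together with the equivalence in Step~1. That theorem gives uniqueness of $(\yopt,\uopt)$ and of $\popt$, and $\popt=\lopt$.
\begin{itemize}
	\item If $\Omega$ is convex and $K\in W^{1,\infty}$, then $\dom{L^*}=H^2(\Omega)\cap H^1_0(\Omega)$ by the elliptic regularity result cited above, so $\lopt\in H^2(\Omega)\cap H^1_0(\Omega)$.
	\item For $d\le3$, the Sobolev embedding $H^2(\Omega)\hookrightarrow C(\overline\Omega)$ yields a continuous representative of $\lopt$.
	\item $\prox_{\theta_0/\alpha}$ is Lipschitz on $\R$, so composing it with the continuous function $\frac1\alpha\lopt$ on $\overline\omega$ gives a continuous representative of $\uopt$ on $\overline\omega$.
\end{itemize}

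I expect Step~2 to be the main obstacle: the measurability and integrability details, and making sure the superposition operator genuinely coincides with the Hilbert-space prox.
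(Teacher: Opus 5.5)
Your proposal is correct and follows the paper's proof essentially step by step. You combine Theorem~\ref{thm:adjoint_regularity_general} with Proposition~\ref{prop:elliptic_companion}, identify $\prox_{\theta/\alpha}$ pointwise via nonexpansiveness and pointwise minimization (more carefully than the paper, including the affine-minorant argument that makes $\theta$ well defined), and conclude with elliptic regularity and $H^2(\Omega)\hookrightarrow C(\overline\Omega)$.

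The one step to tighten is uniqueness. The statement of Theorem~\ref{thm:existence_optimal_sol} only gives uniqueness of the optimal pair and of the multiplier belonging to it, so it does not by itself show that \eqref{eq:optsys_elliptic} has exactly one solution. You also need that the first-order conditions are sufficient. By convexity, $0\in\partial g(u)$ characterizes minimizers (cf.\ \eqref{eq:adjoint_eq}), and \eqref{eq:optsys} is equivalent to this inclusion, so every solution of \eqref{eq:optsys} is the optimal triple. The paper states this sufficiency explicitly.
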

	\begin{proof}
		By Theorem~\ref{thm:adjoint_regularity_general} and Proposition~\ref{prop:elliptic_companion}, the triple $(\yopt,\uopt,\popt)$ satisfies \eqref{eq:optsys} if and only if $\popt=\lopt\in\dom{L^*}$ and the first two lines of \eqref{eq:optsys_elliptic} hold together with $\uopt=\prox_{\theta/\alpha}(\tfrac1\alpha\calB'\lopt)=\prox_{\theta/\alpha}(\tfrac1\alpha\lopt|_\omega)$. For $v\in L^2(\omega)$, the function $x\mapsto\prox_{\theta_0/\alpha}(v(x))$ belongs to $L^2(\omega)$, as $\prox_{\theta_0/\alpha}$ is nonexpansive, and minimizes the integrand of $u\mapsto\theta(u)+\tfrac\alpha2\|u-v\|^2_{L^2(\omega)}$ pointwise. Hence, $(\prox_{\theta/\alpha}v)(x)=\prox_{\theta_0/\alpha}(v(x))$ for a.e.\ $x\in\omega$, which yields the third line of \eqref{eq:optsys_elliptic}. Moreover, the optimality conditions in the proof of Theorem~\ref{thm:existence_optimal_sol} are necessary and sufficient, such that \eqref{eq:optsys} and hence \eqref{eq:optsys_elliptic} have exactly one solution.
		 Finally, if $\Omega$ is convex, $K\in W^{1,\infty}(\Omega)^{d\times d}$ and $d\leq3$, then $\lopt\in H^2(\Omega)\hookrightarrow C(\overline\Omega)$, and the composition of $\tfrac1\alpha\lopt$ with the nonexpansive map $\prox_{\theta_0/\alpha}$ is continuous on $\overline\omega$.
	\end{proof}
	The gained regularity stems from the regularity of the cost functional, as the right-hand side $-(\yopt-y_d)$ of the adjoint equation belongs to $L^2(\Omega)$. In particular, if $b\in H^{-1}(\Omega)\setminus L^2(\Omega)$, the adjoint state is even more regular than the state, whereas for $b\in L^2(\Omega)$, the same argument applied to $L$ shows that the state belongs to the maximal domain of $L$ and state and adjoint state enjoy the same regularity.

	\subsection{Linear parabolic PDEs}\label{subsec:parabolic}
    In this subsection, we consider linear parabolic PDEs in the variational setting of \cite{schiela2013concise} and show that the a posteriori lifting of the adjoint state in \cite[Prop.~3.8]{schiela2013concise} is a special case of Theorem~\ref{thm:adjoint_regularity_general}. For simplicity, we restrict ourselves to Hilbert spaces.
	To this end, let $V$ and $H$ be separable Hilbert spaces such that $V\hookrightarrow H$ continuously and densely, and consider the Gelfand triplet $V\hookrightarrow H\simeq H'\hookrightarrow V'$. Here, the time derivative of $y\in L^2(0,T;V)$ is understood in the distributional sense. That is, we write $y'=w$ for some $w\in L^2(0,T;V')$ if
	\begin{equation*}
		\int_0^T\varphi'(t)\,y(t)\,\mathrm{d}t=-\int_0^T\varphi(t)\,w(t)\,\mathrm{d}t\qquad\text{in }V'\text{ for all }\varphi\in C_c^\infty(0,T),
	\end{equation*}
	where $y$ is regarded as a $V'$-valued function via the Gelfand triplet. Equivalently, $\tfrac{\mathrm{d}}{\mathrm{d}t}\langle y(\cdot),v\rangle_H=\langle w(\cdot),v\rangle_{V',V}$ holds in the sense of scalar distributions on $(0,T)$ for all $v\in V$, see \cite[Prop.~23.20]{Zeidler1990}. We denote the duality pairing between $L^2(0,T;V')$ and $L^2(0,T;V)$ by $\langle\cdot,\cdot\rangle$. Consider the space
	\begin{equation*}
		W([0,T])\coloneqq\left\{y\in L^2(0,T;V): y'\in L^2(0,T;V')\right\}
	\end{equation*}
	endowed with the norm $\|y\|_{W([0,T])}\coloneqq\|y\|_{L^2(0,T;V)}+\|y'\|_{L^2(0,T;V')}$.
	As is well known, $W([0,T])\hookrightarrow C([0,T];H)$ continuously and the integration by parts formula
	\begin{equation}\label{eq:ibp_W}
		\langle y',\lambda\rangle+\langle \lambda',y\rangle=\langle y(T),\lambda(T)\rangle_H-\langle y(0),\lambda(0)\rangle_H,\qquad y,\lambda\in W([0,T]),
	\end{equation}
	holds, see, e.g., \cite[Prop.~23.23]{Zeidler1990}. Further, let $\Lambda:L^2(0,T;V)\to L^2(0,T;V')$ be linear and continuous such that $\langle\Lambda y,y\rangle\geq\gamma\|y\|^2_{L^2(0,T;V)}$ for some $\gamma>0$ and all $y\in L^2(0,T;V)$, e.g., $(\Lambda y)(t)=a(y(t),\cdot)$ for a bounded and coercive bilinear form $a:V\times V\to\R$. By \cite[Thm.~3.4]{schiela2013concise},
	\begin{equation*}
		\calA:W([0,T])\to\calP'= L^2(0,T;V')\times H,\qquad \calA y\coloneqq\stack{y'+\Lambda y}{y(0)},
	\end{equation*}
	is an isomorphism, where $\calP\coloneqq L^2(0,T;V)\times H$. Hence, a multiplier is an element $p=(\lambda,\lambda_0)\in\calP$, and
	\begin{equation*}
		(\calA'p)(y)=\langle\calA y,p\rangle_{\calP',\calP}=\langle y'+\Lambda y,\lambda\rangle+\langle y(0),\lambda_0\rangle_H,\qquad y\in W([0,T]).
	\end{equation*}
	In other words, $(\lambda,\lambda_0)$ is the adjoint state of \cite[Thm.~1.1]{schiela2013concise}.

	\smallskip

	\noindent\textbf{The optimal control problem.} As in \cite[Sec.~4]{schiela2013concise}, we consider the control of the initial value, i.e., $\calU=H$, $\calB u=(0,u)$ and $b=0$, such that the state equation $\calA y-\calB u=0$ reads $y'+\Lambda y=0$, $y(0)=u$. The cost functional is given by
	\begin{equation*}
		J(y)=\frac12\int_0^T\|y(t)-y_d(t)\|_H^2\,\mathrm{d}t+\frac12\|y(0)-y_{d,0}\|_H^2+\frac12\|y(T)-y_{d,T}\|_H^2
	\end{equation*}
	with $y_{d,0},y_{d,T}\in H$ and $y_d\in L^2(0,T;H)$, and $\theta=0$, i.e., we do not impose control constraints. By Theorem~\ref{thm:existence_optimal_sol}, there is a unique optimal pair $(\yopt,\uopt)$ and a unique multiplier $\popt=(\lopt,\lopt_0)\in L^2(0,T;V)\times H$ with $\calA'\popt=-DJ(\yopt)$ and $\uopt=\tfrac1\alpha\calB'\popt=\tfrac1\alpha\lopt_0$. A priori, however, we only know that $\lopt\in L^2(0,T;V)$, i.e., the adjoint equation only holds in $W([0,T])'$. Indeed, by definition of $\calA'$, solutions of the equation $\calA'(\lambda,\lambda_0)=\vartheta$ are pairs $(\lambda,\lambda_0)\in L^2(0,T;V)\times H$, which, for general right-hand sides $\vartheta\in W([0,T])'$, neither have initial or terminal values nor are continuous in time. In particular, the second component $\lambda_0$ is a priori not related to $\lambda$.

	\smallskip

	\noindent\textbf{Dual companion and balanced optimality system.} To balance the optimality system, we choose
	\begin{equation*}
		E:W([0,T])\to\calH\coloneqq L^2(0,T;V)\times H\times H,\qquad Ey\coloneqq(y,y(0),y(T)),
	\end{equation*}
	which is continuous with dense range, see \cite[Lem.~2.3]{schiela2013concise}, and obviously injective. Then $J=f\circ E$ with
	\begin{equation*}
		f(h,h_0,h_T)=\tfrac12\|h-y_d\|^2_{L^2(0,T;H)}+\tfrac12\|h_0-y_{d,0}\|_H^2+\tfrac12\|h_T-y_{d,T}\|_H^2,
	\end{equation*}
	which is Fr\'echet differentiable on $\calH$ as $L^2(0,T;V)\hookrightarrow L^2(0,T;H)$, and its derivative is given by
    \begin{equation*}
        Df(h,h_0,h_T)=(h-y_d,h_0-y_{d,0},h_T-y_{d,T})\in\calH'=L^2(0,T;V')\times H\times H,
    \end{equation*}
    where $h-y_d\in L^2(0,T;H)\subset L^2(0,T;V')$ by the Gelfand triplet.
	\begin{prpstn}\label{prop:parabolic_companion}
		The dual companion of $(W([0,T]),E,\calA)$ is given by $\Xd=W([0,T])\times H$, the inclusion $\Ed$ of $W([0,T])\times H$ into $\calP$ and
		\begin{equation*}
			\Ad:\Xd\to\calH',\qquad \Ad\stack{\lambda}{\lambda_0}=\begin{bsmallmatrix}-\lambda'+\Lambda'\lambda\\ \lambda_0-\lambda(0)\\ \lambda(T)\end{bsmallmatrix},
		\end{equation*}
		where $\Lambda':L^2(0,T;V)\to L^2(0,T;V')$ denotes the Banach space adjoint of $\Lambda$, i.e., $\langle\Lambda'\lambda,y\rangle=\langle\Lambda y,\lambda\rangle$.
	\end{prpstn}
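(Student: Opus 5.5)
We apply Proposition~\ref{prop:companion_unique} with the candidate $\tXd\coloneqq W([0,T])\times H$, $\tEd$ the inclusion into $\calP=L^2(0,T;V)\times H$, and $\tAd$ the operator in the statement. Two things must be checked: the identity $\calA'\tEd=E'\tAd$ in $W([0,T])'$, and that $\tAd:W([0,T])\times H\to\calH'=L^2(0,T;V')\times H\times H$ is an isomorphism. The statement then follows directly from Proposition~\ref{prop:companion_unique}. Boundedness of $\tAd$ is immediate from the continuity of $\Lambda'$ and of the embedding $W([0,T])\hookrightarrow C([0,T];H)$.

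For the identity, fix $y\in W([0,T])$ and $(\lambda,\lambda_0)\in W([0,T])\times H$. By definition of $\calA'$,
\begin{equation*}
(\calA'(\lambda,\lambda_0))(y)=\langle y',\lambda\rangle+\langle\Lambda y,\lambda\rangle+\langle y(0),\lambda_0\rangle_H .
\end{equation*}
The integration by parts formula \eqref{eq:ibp_W} gives $\langle y',\lambda\rangle=-\langle\lambda',y\rangle+\langle y(T),\lambda(T)\rangle_H-\langle y(0),\lambda(0)\rangle_H$, and $\langle\Lambda y,\lambda\rangle=\langle\Lambda'\lambda,y\rangle$. Hence the right-hand side equals
\begin{equation*}
\langle-\lambda'+\Lambda'\lambda,y\rangle+\langle\lambda_0-\lambda(0),y(0)\rangle_H+\langle\lambda(T),y(T)\rangle_H=\langle\tAd(\lambda,\lambda_0),Ey\rangle_{\calH',\calH}=(E'\tAd(\lambda,\lambda_0))(y),
\end{equation*}
which is the required identity.

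The main step is bijectivity of $\tAd$; the bounded inverse theorem then gives that it is an isomorphism. Given $(g,\xi_0,\xi_T)\in L^2(0,T;V')\times H\times H$, we must solve
\begin{equation*}
-\lambda'+\Lambda'\lambda=g,\qquad \lambda(T)=\xi_T,\qquad \lambda_0=\xi_0+\lambda(0).
\end{equation*}
The last equation determines $\lambda_0$ uniquely once $\lambda$ is known, so it suffices that the backward problem has a unique solution $\lambda\in W([0,T])$. To see this, reverse time via $\mu(t)\coloneqq\lambda(T-t)$ and define $(\tilde\Lambda\mu)(t)\coloneqq(\Lambda'\lambda)(T-t)$. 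Then the problem becomes $\mu'+\tilde\Lambda\mu=g(T-\cdot)$ with $\mu(0)=\xi_T$. The time reflection is an isometric isomorphism of $L^2(0,T;V)$, of $L^2(0,T;V')$ and of $W([0,T])$, and it maps derivatives to negative derivatives. Moreover, $\tilde\Lambda$ is bounded and satisfies $\langle\tilde\Lambda\mu,\mu\rangle=\langle\Lambda'\lambda,\lambda\rangle=\langle\Lambda\lambda,\lambda\rangle\ge\gamma\|\lambda\|^2_{L^2(0,T;V)}=\gamma\|\mu\|^2_{L^2(0,T;V)}$. Therefore \cite[Thm.~3.4]{schiela2013concise}, applied to $\tilde\Lambda$, yields a unique solution $\mu\in W([0,T])$, and hence existence and uniqueness of $\lambda$.

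The care needed lies in this reduction: one must verify that the reflection commutes correctly with the distributional derivative, and that the hypotheses on $\Lambda$ in \cite{schiela2013concise} are only boundedness and coercivity, with no causality or pointwise-in-time structure, so that $\tilde\Lambda$ qualifies. If a pointwise structure were required, one would instead assume $(\Lambda y)(t)=a(y(t),\cdot)$ and use the adjoint form $a(\cdot,y)$. Injectivity also follows independently from the identity: if $\tAd(\lambda,\lambda_0)=0$, then $\calA'(\lambda,\lambda_0)=0$, and since $\calA'$ is injective, $(\lambda,\lambda_0)=0$. Thus only surjectivity, i.e.\ existence for the backward problem, actually requires the time reversal.
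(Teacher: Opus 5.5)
Your proposal is correct and follows the paper's proof essentially step for step: you apply Proposition~\ref{prop:companion_unique} to the same candidate, obtain the identity from \eqref{eq:ibp_W}, and prove bijectivity of $\tAd$ by reversing time to a forward problem with $\mathcal R_T\Lambda'\mathcal R_T$ and invoking \cite[Thm.~3.4]{schiela2013concise}. Your side remark, that injectivity of $\tAd$ already follows from the identity together with the injectivity of $\calA'$, is a valid small shortcut.
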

	\begin{proof}
		We apply Proposition~\ref{prop:companion_unique} with $\tXd\coloneqq W([0,T])\times H$, the inclusion $\tEd$ into $\calP$ and $\tAd\coloneqq\Ad$ as defined above. By \eqref{eq:ibp_W}, we obtain for all $(\lambda,\lambda_0)\in\tXd$ and $y\in W([0,T])$
		\begin{align*}
			\left(\calA'\tEd\stack{\lambda}{\lambda_0}\right)(y)&=\langle y'+\Lambda y,\lambda\rangle+\langle y(0),\lambda_0\rangle_H\\
			&=\langle -\lambda'+\Lambda'\lambda,y\rangle+\langle \lambda_0-\lambda(0),y(0)\rangle_H+\langle \lambda(T),y(T)\rangle_H\\
			&=\left\langle\tAd\stack{\lambda}{\lambda_0},Ey\right\rangle_{\calH',\calH},
		\end{align*}
		i.e., the identity \eqref{eq:companion_identity_Y}, which reduces to \cite[Prop.~3.6]{schiela2013concise} for $\lambda_0=\lambda(0)$. Moreover, $\tAd$ is an isomorphism. Indeed, for $(g,g_0,g_T)\in\calH'$, the time reversal $q(t)\coloneqq \lambda(T-t)$ transforms the backward problem $-\lambda'+\Lambda'\lambda=g$, $\lambda(T)=g_T$ into a forward problem of the above form with $\Lambda$ replaced by $\mathcal R_T\Lambda'\mathcal R_T$, where $(\mathcal R_Tv)(t)\coloneqq v(T-t)$, which is again continuous and satisfies the same coercivity estimate. Hence, \cite[Thm.~3.4]{schiela2013concise} yields a unique solution $\lambda\in W([0,T])$ which depends continuously on the data, and $\lambda_0\coloneqq g_0+\lambda(0)$ is the unique choice with $\tAd(\lambda,\lambda_0)=(g,g_0,g_T)$. Consequently, Proposition~\ref{prop:companion_unique} yields the assertion.
	\end{proof}
	In particular, $\dom{\calA_E'}=W([0,T])\times H$ and $\calA_E'=\Ad$ by Proposition~\ref{prop:companion_unique}. Together with Lemma~\ref{lem:schiela_38_equiv}, we conclude for $(\lambda,\lambda_0)\in L^2(0,T;V)\times H$ that $\calA'(\lambda,\lambda_0)\in\ran{E'}$ holds if and only if $\lambda\in W([0,T])$. This is precisely the a posteriori lifting of the adjoint state in \cite[Prop.~3.8]{schiela2013concise}, i.e., although the multiplier is a priori only an element of $L^2(0,T;V)\times H$, the regularity of the cost lifts it into $W([0,T])\times H$.
	\begin{crllr}\label{cor:parabolic_optsys}
		The triple $(\yopt,\uopt,\popt)$ satisfies \eqref{eq:optsys} if and only if $\popt=(\lopt,\lopt_0)$ for some $(\lopt,\lopt_0)\in W([0,T])\times H$ with
		\begin{equation}\label{eq:optsys_parabolic}
	\left\{
	\begin{alignedat}{2}
		(\yopt)'+\Lambda\yopt&=0, &\qquad \yopt(0)&=\uopt,\\
		-\lopt'+\Lambda'\lopt&=-(\yopt-y_d), &\qquad \lopt(T)&=-(\yopt(T)-y_{d,T}),\\
		\uopt&=\tfrac1\alpha\lopt_0, &\qquad \lopt_0&=\lopt(0)-(\yopt(0)-y_{d,0}).
	\end{alignedat}
	\right.
\end{equation}
		In particular, \eqref{eq:optsys_parabolic} has a unique solution.
	\end{crllr}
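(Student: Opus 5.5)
The plan is to derive Corollary~\ref{cor:parabolic_optsys} directly from Theorem~\ref{thm:adjoint_regularity_general}, using the dual companion identified in Proposition~\ref{prop:parabolic_companion}. The hypotheses of Theorem~\ref{thm:adjoint_regularity_general} have already been checked above: $E$ is continuous and injective with dense range into the Hilbert space $\calH$, and $J=f\circ E$ with $f$ Fr\'echet differentiable.

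Theorem~\ref{thm:adjoint_regularity_general} then states that $(\yopt,\uopt,\popt)$ satisfies \eqref{eq:optsys} if and only if $\popt=\Ed(\lopt,\lopt_0)$ for a unique $(\lopt,\lopt_0)\in W([0,T])\times H$ satisfying the balanced system \eqref{eq:general_balanced_optsys_direct}. Since $\Ed$ is the inclusion, this is just $\popt=(\lopt,\lopt_0)$. It remains to unpack the three lines of \eqref{eq:general_balanced_optsys_direct}.

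\begin{enumerate}[label=(\roman*)]
\item \emph{State equation.} The first line $\calA\yopt-\calB\uopt=0$ means $(\yopt)'+\Lambda\yopt=0$ and $\yopt(0)=\uopt$, by the definitions of $\calA$ and $\calB u=(0,u)$.
\item \emph{Adjoint equation.} The second line reads $\Ad(\lopt,\lopt_0)=-Df(E\yopt)=-(\yopt-y_d,\ \yopt(0)-y_{d,0},\ \yopt(T)-y_{d,T})$. Comparing components with the formula for $\Ad$ in Proposition~\ref{prop:parabolic_companion} gives
\[
-\lopt'+\Lambda'\lopt=-(\yopt-y_d),\qquad \lopt_0-\lopt(0)=-(\yopt(0)-y_{d,0}),\qquad \lopt(T)=-(\yopt(T)-y_{d,T}).
\]
The middle identity is the relation for $\lopt_0$ in \eqref{eq:optsys_parabolic}.
\item \emph{Control equation.} Since $\theta=0$, the proximal operator is the identity. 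Also $\calB'(\lambda,\lambda_0)=\lambda_0$, because $\langle\calB u,(\lambda,\lambda_0)\rangle=\langle u,\lambda_0\rangle_H$. Hence the third line becomes $\uopt=\tfrac1\alpha\lopt_0$.
\end{enumerate}

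Each of these is an equivalence, so the reformulation holds in both directions.

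For uniqueness, Theorem~\ref{thm:existence_optimal_sol} gives a unique optimal pair and a unique multiplier $\popt$. Moreover, the optimality conditions used in its proof are necessary and sufficient, exactly as argued in the proof of Corollary~\ref{cor:elliptic_optsys}. Consequently \eqref{eq:optsys} has exactly one solution. Through the equivalence above, with $(\lopt,\lopt_0)$ determined uniquely by $\popt$, the system \eqref{eq:optsys_parabolic} also has exactly one solution.

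The only point needing care is matching the components of $\calH'=L^2(0,T;V')\times H\times H$ correctly. In particular, $-(\yopt-y_d)$ must be read in $L^2(0,T;V')$ via the Gelfand triplet. Beyond that, the argument is pure bookkeeping.
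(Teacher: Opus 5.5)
Your proposal is correct and follows the paper's proof essentially step by step. You apply Theorem~\ref{thm:adjoint_regularity_general} with the dual companion from Proposition~\ref{prop:parabolic_companion}, and unpack the three lines componentwise: the prox is the identity, $\calB'(\lambda,\lambda_0)=\lambda_0$, and the middle component of $\Ad$ yields the relation for $\lopt_0$. Uniqueness then follows because the optimality conditions in the proof of Theorem~\ref{thm:existence_optimal_sol} are necessary and sufficient.
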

	\begin{proof}
		As $\theta=0$, the proximal operator $\prox_{\theta/\alpha}$ is the identity, and $\calB'(\lopt,\lopt_0)=\lopt_0$, such that the third line of \eqref{eq:general_balanced_optsys_direct} reads $\uopt=\tfrac1\alpha\lopt_0$. Moreover, the state equation $\calA\yopt-\calB\uopt=0$ is equivalent to the first line of \eqref{eq:optsys_parabolic}, and $\Ad(\lopt,\lopt_0)=-Df(E\yopt)$ is equivalent to the second line together with $\lopt_0=\lopt(0)-(\yopt(0)-y_{d,0})$. Hence, the characterization follows from Theorem~\ref{thm:adjoint_regularity_general} and Proposition~\ref{prop:parabolic_companion}. Again, the optimality conditions in the proof of Theorem~\ref{thm:existence_optimal_sol} are necessary and sufficient, such that \eqref{eq:optsys} and hence \eqref{eq:optsys_parabolic} have exactly one solution.
	\end{proof}
	The system \eqref{eq:optsys_parabolic} is precisely the optimality system of \cite[Thm.~4.1]{schiela2013concise}. In this form, the regularity of state and adjoint is balanced, and they even enjoy the same regularity, i.e., $\yopt,\lopt\in W([0,T])$. Note that $\lopt_0$ differs from $\lopt(0)$ due to the observation of the initial value in the cost functional.
		\subsection{Abstract Cauchy problems}\label{subsec:cauchy}
	Many linear evolution equations, such as heat, wave, transport or Schr\"odinger equations, as well as linear ODEs, can be written as an abstract Cauchy problem
	\begin{equation}\label{eq:abstract_Cauchy}
		\tfrac{\mathrm{d}}{\mathrm{d}t}x(t)=Ax(t)+Bu(t),\qquad x(0)=x_0,\qquad t\in(0,T),
	\end{equation}
	where, throughout this subsection, $X$ and $U$ are Hilbert spaces, $A:X\supset\dom{A}\to X$ is the generator of a $C_0$-semigroup $(\mathbb{T}(t))_{t\geq0}$ on $X$, and $B\in L(U,X)$. Here, the spatial differential operator and the boundary conditions are encoded in the single operator $A$, see, e.g., \cite{curtain2020introduction,tucsnak2009observation}. While semigroup theory is a standard tool in mathematical systems theory, linear evolution equations in PDE-constrained optimization are usually treated by variational methods, i.e., by means of weak solutions. Both perspectives are, however, equivalent: by a well-known result of \cite{ball1977strongly}, weak solutions of \eqref{eq:abstract_Cauchy} coincide with the mild solutions given by the variation of constants formula. Hence, abstract Cauchy problems provide a unified framework for a broad class of linear ODEs and PDEs, which does not require analyticity, coercivity or maximal regularity, and thus also covers hyperbolic problems.

	We follow the strategy of Subsection~\ref{subsec:parabolic}, i.e., we write the state equation as an operator equation with an isomorphism $\calA$ and identify the dual companion by Proposition~\ref{prop:companion_unique}. In contrast to the variational setting of Subsection~\ref{subsec:parabolic}, however, a general generator $A$ does not enjoy maximal regularity, e.g., for hyperbolic problems, such that the isomorphism results for analytic semigroups, see \cite[Part~II, Ch.~1, Thm.~3.1]{bensoussan2007representation}, do not apply. Hence, we first have to establish the required isomorphism, and we proceed in three steps. First, we derive the natural solution space of \eqref{eq:abstract_Cauchy}, which consists of all continuous $X$-valued functions whose residual in the differential equation, understood in an extrapolation space, is integrable in time. Second, we show that the operator which maps a state to its residual and its initial value is an isomorphism, such that the results of Sections~\ref{sec:abstract_ocp} and~\ref{sec:balancing} apply. Third, we identify the dual companion, which shows that the multiplier is given by the solution $\lopt$ of an evolution equation posed backward in time and governed by the adjoint generator $A^*$, in a solution space of the same kind as the state.

	We denote the \textit{Hilbert space adjoint} of $B$ by $B^*\in L(X,U)$. The Hilbert space adjoint $A^*:X\supset\dom{A^*}\to X$ of the densely defined operator $A$ has the domain
	\[
	\dom{A^*}=\left\lbrace y\in X : \exists\, z\in X \text{ s.t.\ }\forall\,x\in\dom A:\;\langle y,Ax\rangle_{X}=\langle z,x\rangle_{X} \right\rbrace
	\]
	and is given by $A^*y=z$, which is well-defined by the density of $\dom A$ in $X$. Then $A^*$ generates the $C_0$-semigroup $(\mathbb{T}^*(t))_{t\geq0}$, see \cite[Thm.~2.3.6]{curtain2020introduction}.

	\smallskip

	\noindent\textbf{Interpolation and extrapolation spaces.} As $A$ is unbounded, the residual $x'-Ax$ of a mild solution of \eqref{eq:abstract_Cauchy} is, in general, not an element of $X$. To give it a meaning, we employ the scale of interpolation and extrapolation spaces associated with $A$, see \cite[Sec.~2.10]{tucsnak2009observation}. It is well known that the generator $A$ is densely defined and closed and has nonempty resolvent set $\rho(A)$, see, e.g., \cite[Thm.~2.1.13 and Lem.~2.1.14]{curtain2020introduction}. Fix $\mu\in\rho(A)$. The \textit{interpolation space associated with $A$} is defined by
	\begin{equation*}
		X_1\coloneqq\dom{A},\qquad \|z\|_{X_1}\coloneqq\|(\mu I-A)z\|_X,\qquad z\in\dom{A}.
	\end{equation*}
	Different choices of $\mu\in\rho(A)$ yield equivalent norms on $X_1$, such that $X_1$ is independent of the particular choice of $\mu$, up to equivalence of norms. Moreover, $\|\cdot\|_{X_1}$ is equivalent to the graph norm of $A$, and $X_1$ is a Hilbert space, see \cite[Prop.~2.10.1]{tucsnak2009observation}. In particular, its inner product may equivalently be chosen as the graph inner product
	\begin{equation*}
		\langle z,v\rangle_{\dom{A}}\coloneqq\langle z,v\rangle_X+\langle Az,Av\rangle_X,\qquad z,v\in\dom{A},
	\end{equation*}
	and the canonical embedding $X_1\hookrightarrow X$ is continuous and dense. Analogously, since $\overline{\mu}\in\rho(A^*)$, we define the \textit{interpolation space associated with $A^*$} by
	\begin{equation*}
		X_1^d\coloneqq\dom{A^*},\qquad \|z\|_{X_1^d}\coloneqq\|(\overline{\mu}I-A^*)z\|_X,\qquad z\in\dom{A^*}.
	\end{equation*}
	The \textit{extrapolation space associated with $A$} is obtained by equipping $X$ with the norm
	\begin{equation*}
		\|z\|_{X_{-1}}\coloneqq\|(\mu I-A)^{-1}z\|_X,\qquad z\in X,
	\end{equation*}
	and defining $X_{-1}$ as the completion of $X$ with respect to this norm. Again, different choices of $\mu\in\rho(A)$ lead to equivalent norms and hence to the same extrapolation space up to canonical isomorphism. The operator $A\in L(X_1,X)$ has a unique bounded extension $A_{-1}\in L(X,X_{-1})$, the \textit{extrapolation of $A$}, see \cite[Prop.~2.10.3]{tucsnak2009observation}, and we obtain the continuous and dense embeddings
	\begin{equation*}
		X_1\hookrightarrow X\hookrightarrow X_{-1}.
	\end{equation*}
	With the roles of $A$ and $A^*$ interchanged, we analogously obtain the extrapolation space $X_{-1}^d$ together with the extrapolation $(A^*)_{-1}\in L(X,X_{-1}^d)$ of $A^*$. In the Hilbert space setting, the extrapolation spaces can be identified with the duals of the interpolation spaces with respect to the pivot space $X$, i.e.,
	\begin{equation*}
		(X_1^d)'\simeq X_{-1},\qquad (X_1)'\simeq X_{-1}^d,
	\end{equation*}
	see \cite[Prop.~1.3.15]{Hosfeld2025}. Consequently,
	\begin{equation*}
		X_1^d\hookrightarrow X\simeq X'\hookrightarrow X_{-1}
	\end{equation*}
	forms a Gelfand triplet, also referred to as an \textit{evolution triple} or a \textit{rigged Hilbert space}, and the duality pairing between $X_{-1}$ and $X_1^d$ extends the inner product on the pivot space $X$, i.e.,
	\begin{equation*}
		\langle h,v\rangle_{X_{-1},X_1^d}=\langle h,v\rangle_X,\qquad h\in X,\ v\in X_1^d.
	\end{equation*}
	Moreover, the extrapolation of $A$ is characterized by
	\begin{equation}\label{eq:A_extrapolation}
		\langle A_{-1}z,v\rangle_{X_{-1},X_1^d}=\langle z,A^*v\rangle_X,\qquad z\in X,\ v\in X_1^d,
	\end{equation}
	see \cite[Prop.~2.10.3 and its proof]{tucsnak2009observation}. In particular, if $z\in X$ satisfies $A_{-1}z\in X$, then \eqref{eq:A_extrapolation} and $A^{**}=A$ imply $z\in\dom{A}$ and $Az=A_{-1}z$. The same arguments with $A$ and $A^*$ interchanged yield $\langle (A^*)_{-1}z,v\rangle_{X_{-1}^d,X_1}=\langle z,Av\rangle_X$ for all $z\in X$ and $v\in X_1$.

	\smallskip

	\noindent\textbf{Forward and backward operators.} We recall the two standard notions of solutions of the abstract Cauchy problem with a general source term.
	\begin{dfntn}[Solution concepts]\label{def:solution_concepts}
		Let $p\in[1,\infty)$, $x_0\in X$ and $g\in L^p(0,T;X)$.
		\begin{enumerate}[label=(\roman*)]
			\item The function
			\begin{equation*}
				x(t)=\mathbb{T}(t)x_0+\int_0^t\mathbb{T}(t-s)g(s)\,\mathrm{d}s,\qquad t\in[0,T],
			\end{equation*}
			which belongs to $C([0,T];X)$, is called the \textit{mild solution} of $x'=Ax+g$, $x(0)=x_0$.
			\item A function $x\in C([0,T];X)$ is called a \textit{weak solution} of $x'=Ax+g$, $x(0)=x_0$, if $x(0)=x_0$ and, for every $v\in\dom{A^*}$, the function $t\mapsto\langle x(t),v\rangle_X$ is absolutely continuous with
			\begin{equation}\label{eq:weak_sol}
				\tfrac{\mathrm{d}}{\mathrm{d}t}\langle x(t),v\rangle_X=\langle x(t),A^*v\rangle_X+\langle g(t),v\rangle_X\qquad\text{for a.e. }t\in(0,T).
			\end{equation}
		\end{enumerate}
	\end{dfntn}
	By \cite{ball1977strongly}, see also \cite[Thm.~5.1.10]{curtain2020introduction}, for every $x_0\in X$ and $g\in L^p(0,T;X)$, there exists a unique weak solution, which coincides with the mild solution.

	For a Banach space $Y$, we denote by $D'(0,T;Y)$ the space of $Y$-valued distributions on $(0,T)$, i.e., of continuous linear mappings from $C_c^\infty(0,T)$ to $Y$. Every $x\in L^1(0,T;Y)$, and in particular every $x\in C([0,T];Y)$, defines an element of $D'(0,T;Y)$ via $\varphi\mapsto\int_0^T\varphi(t)x(t)\,\mathrm{d}t$, and its distributional derivative $x'\in D'(0,T;Y)$ is given by
	\begin{equation*}
		x'(\varphi)\coloneqq-\int_0^T\varphi'(t)\,x(t)\,\mathrm{d}t,\qquad\varphi\in C_c^\infty(0,T).
	\end{equation*}
	As $X\hookrightarrow X_{-1}$ and $A_{-1}\in L(X,X_{-1})$, we may regard $x'$ and $A_{-1}x$ as elements of $D'(0,T;X_{-1})$ for every $x\in C([0,T];X)$. 	For $p\in(1,\infty)$, we define the solution space
	\begin{equation*}
		E_p(0,T;A)\coloneqq\left\{x\in C([0,T];X): x'-A_{-1}x\in L^p(0,T;X)\right\},
	\end{equation*}
	endowed with the norm
	\begin{equation*}
		\|x\|_{E_p(0,T;A)}\coloneqq\|x\|_{C([0,T];X)}+\|x'-A_{-1}x\|_{L^p(0,T;X)}.
	\end{equation*}
	Further, let $q\coloneqq\tfrac{p}{p-1}$ and define the reflexive Banach space $\calP\coloneqq L^q(0,T;X)\times X$, whose dual space we identify with
	\begin{equation*}
		\calP'=L^p(0,T;X)\times X
	\end{equation*}
	via the pairing
	\begin{equation*}
		\left\langle\stack{g}{x_0},\stack{\lambda}{\lambda_0}\right\rangle_{\calP',\calP}\coloneqq\int_0^T\langle g(t),\lambda(t)\rangle_X\,\mathrm{d}t+\langle x_0,\lambda_0\rangle_X,
	\end{equation*}
	which is possible as $X$ is a Hilbert space. We consider the linear \emph{forward operator} $\calA:E_p(0,T;A)\to\calP'$ given by
	\begin{equation}\label{eq:def_A}
		\calA x\coloneqq\stack{x'-A_{-1}x}{x(0)}.
	\end{equation}
	\begin{lmm}\label{lem:Ep_Banach}
		The space $E_p(0,T;A)$ is a Banach space, and $\calA:E_p(0,T;A)\to \calP'$ is a bounded linear operator.
	\end{lmm}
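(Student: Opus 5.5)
Linearity of $\calA$ and the norm bound are immediate, so the real content is completeness of $E_p(0,T;A)$. First we check that $\|\cdot\|_{E_p(0,T;A)}$ is a norm on a vector space: $E_p(0,T;A)$ is a linear subspace of $C([0,T];X)$, because the maps $x\mapsto x'$ and $x\mapsto A_{-1}x$ are linear from $C([0,T];X)$ into $D'(0,T;X_{-1})$. The norm is then the sum of the norm on $C([0,T];X)$ and a seminorm. Boundedness of $\calA$ follows at once:
\begin{equation*}
\|\calA x\|_{\calP'}\le \|x'-A_{-1}x\|_{L^p(0,T;X)}+\|x(0)\|_X\le \|x\|_{E_p(0,T;A)}.
\end{equation*}

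For completeness, let $(x_n)$ be Cauchy in $E_p(0,T;A)$. Then $x_n\to x$ in $C([0,T];X)$, and $g_n\coloneqq x_n'-A_{-1}x_n\to g$ in $L^p(0,T;X)$, since both target spaces are complete. It remains to identify $g=x'-A_{-1}x$ in $D'(0,T;X_{-1})$, which we do by passing to the limit distributionally. Fix $\varphi\in C_c^\infty(0,T)$. Then
\begin{equation*}
x_n'(\varphi)=-\int_0^T\varphi'(t)x_n(t)\,\mathrm dt\to-\int_0^T\varphi'(t)x(t)\,\mathrm dt=x'(\varphi)
\end{equation*}
in $X$, hence in $X_{-1}$ by the continuous embedding $X\hookrightarrow X_{-1}$. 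Similarly,
\begin{equation*}
(A_{-1}x_n)(\varphi)=A_{-1}\int_0^T\varphi x_n\,\mathrm dt\to A_{-1}\int_0^T\varphi x\,\mathrm dt,
\end{equation*}
using $A_{-1}\in L(X,X_{-1})$ and the fact that bounded operators commute with Bochner integrals. Finally,
\begin{equation*}
\int_0^T\varphi g_n\,\mathrm dt\to\int_0^T\varphi g\,\mathrm dt
\end{equation*}
in $X$ by H\"older's inequality. Uniqueness of limits in $X_{-1}$ then gives $x'-A_{-1}x=g\in L^p(0,T;X)$, so $x\in E_p(0,T;A)$. Moreover,
\begin{equation*}
\|x_n-x\|_{E_p(0,T;A)}=\|x_n-x\|_{C([0,T];X)}+\|g_n-g\|_{L^p(0,T;X)}\to0.
\end{equation*}

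The only delicate step is justifying that the distributional derivative and $A_{-1}$ pass to the limit. This rests entirely on working in $X_{-1}$, where both terms are continuous functions of $x\in C([0,T];X)$ tested against a fixed $\varphi$. It also needs $L^p$ functions to be identified with distributions injectively, which holds by the standard fundamental lemma for Bochner integrals.
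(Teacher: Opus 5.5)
Your proof is correct and takes essentially the same approach as the paper. The paper phrases completeness as closedness of the operator $x\mapsto x'-A_{-1}x$ from $C([0,T];X)$ into $L^p(0,T;X)$ rather than via a Cauchy sequence, but the decisive step is identical: pass to the limit in $D'(0,T;X_{-1})$ using $X\hookrightarrow X_{-1}$ and $A_{-1}\in L(X,X_{-1})$, then invoke uniqueness of limits.
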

	\begin{proof}
		We show that the linear operator $\mathcal U:C([0,T];X)\supset E_p(0,T;A)\to L^p(0,T;X)$, $\mathcal Ux\coloneqq x'-A_{-1}x$, is closed. To this end, let $(x_n)_{n\in\N}\subset E_p(0,T;A)$ be such that
		\begin{equation*}
			x_n\to x\quad\text{in }C([0,T];X),\qquad \mathcal Ux_n\to g\quad\text{in }L^p(0,T;X),
		\end{equation*}
		as $n\to\infty$. We show that $x\in E_p(0,T;A)$ and $\mathcal Ux=g$. First, for every $\varphi\in C_c^\infty(0,T)$, we have
		\begin{equation*}
			x_n'(\varphi)=-\int_0^T\varphi'(t)\,x_n(t)\,\mathrm{d}t\to-\int_0^T\varphi'(t)\,x(t)\,\mathrm{d}t=x'(\varphi)\quad\text{in }X\hookrightarrow X_{-1},
		\end{equation*}
		i.e., $x_n'\to x'$ in $D'(0,T;X_{-1})$. Because $A_{-1}\in L(X,X_{-1})$, we obtain $A_{-1}x_n\to A_{-1}x$ in $C([0,T];X_{-1})$ and hence in $D'(0,T;X_{-1})$. Consequently, $x_n'-A_{-1}x_n\to x'-A_{-1}x$ in $D'(0,T;X_{-1})$. On the other hand, since $L^p(0,T;X)\hookrightarrow D'(0,T;X_{-1})$ continuously, we have $x_n'-A_{-1}x_n\to g$ in $D'(0,T;X_{-1})$. By uniqueness of limits, as $D'(0,T;X_{-1})$ is a Hausdorff space, we obtain
		\begin{equation*}
			x'-A_{-1}x=g\in L^p(0,T;X).
		\end{equation*}
		This implies $x\in E_p(0,T;A)$ and $\mathcal Ux=g$. Hence, $\mathcal U$ is closed, such that its domain endowed with the graph norm is a Banach space. As the graph norm of $\mathcal U$ coincides with $\|\cdot\|_{E_p(0,T;A)}$, the space $E_p(0,T;A)$ is a Banach space. Finally, $\calA$ is bounded by definition of the norm on $E_p(0,T;A)$ and the continuity of the evaluation $C([0,T];X)\ni x\mapsto x(0)\in X$.
	\end{proof}
	The following result shows that the operator $\calA$ provides an equivalent formulation of the abstract Cauchy problem, which links the variational and the semigroup perspective. 	For generators of analytic semigroups, maximal regularity yields that $x\mapsto(x'-Ax,x(0))$ is an isomorphism from $W^{1,2}(0,T;X)\cap L^2(0,T;\dom{A})$ onto $L^2(0,T;X)\times D_A(\tfrac12,2)$, with an interpolation space $D_A(\tfrac12,2)$ of initial values, see \cite[Part~II, Ch.~1, Thm.~3.1]{bensoussan2007representation}. For coercive variational problems, an analogous isomorphism holds in the setting of Subsection~\ref{subsec:parabolic}, see \cite[Part~II, Ch.~2, Thm.~1.1]{bensoussan2007representation}. Theorem~\ref{thm:inv_of_A} extends these results to generators of arbitrary $C_0$-semigroups and initial values in $X$, where maximal regularity is replaced by the requirement that the residual $x'-A_{-1}x$, understood in the extrapolation space $X_{-1}$, is $p$-integrable.
	\begin{thrm}[Isomorphism property of the forward operator]\label{thm:inv_of_A}
		Let $p\in (1,\infty)$, $x_0\in X$ and $g\in L^p(0,T;X)$. Then $x\in E_p(0,T;A)$ satisfies
		\begin{equation*}
			\calA x=\stack{g}{x_0}
		\end{equation*}
		if and only if $x$ is a weak solution of $x'=Ax+g$, $x(0)=x_0$. In particular, $\calA$ is an isomorphism from $E_p(0,T;A)$ onto $\calP'$, and
		\begin{equation}\label{eq:mild_sol_general}
			\calA^{-1}\stack{g}{x_0}=\mathbb{T}(\cdot)x_0+\int_0^\cdot\mathbb{T}(\cdot-s)g(s)\,\mathrm{d}s.
		\end{equation}
	\end{thrm}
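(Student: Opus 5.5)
The plan is to prove the equivalence first, and then obtain the isomorphism property from existence and uniqueness of weak solutions due to \cite{ball1977strongly}. The bridge is the duality pairing \eqref{eq:A_extrapolation} between $X_{-1}$ and $X_1^d$. For $x\in C([0,T];X)$ and $v\in\dom{A^*}$, I test the $X_{-1}$-valued distribution $x'-A_{-1}x$ against $v$. Pairing with $v$ commutes with distributional differentiation, since $\langle\cdot,v\rangle_{X_{-1},X_1^d}$ is a bounded functional on $X_{-1}$. Together with \eqref{eq:A_extrapolation}, this gives, in the sense of scalar distributions on $(0,T)$,
\begin{equation*}
	\langle x'-A_{-1}x,v\rangle_{X_{-1},X_1^d}=\tfrac{\mathrm{d}}{\mathrm{d}t}\langle x(\cdot),v\rangle_X-\langle x(\cdot),A^*v\rangle_X .
\end{equation*}

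For the direction ``$\calA x=(g,x_0)$ implies weak solution'', suppose $x\in E_p(0,T;A)$ satisfies $x'-A_{-1}x=g$ and $x(0)=x_0$. The identity above shows that the scalar distributional derivative of $t\mapsto\langle x(t),v\rangle_X$ equals $\langle x(\cdot),A^*v\rangle_X+\langle g(\cdot),v\rangle_X$, which lies in $L^1(0,T)$. A continuous function whose distributional derivative is in $L^1$ is absolutely continuous, with a.e.\ derivative equal to that $L^1$ function. Hence \eqref{eq:weak_sol} holds.

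For the converse, let $x$ be a weak solution. Then the identity yields $\langle x'-A_{-1}x-g,v\rangle=0$ as a scalar distribution for every $v\in X_1^d$. To conclude $x'-A_{-1}x=g$ in $D'(0,T;X_{-1})$, apply this to a fixed test function $\varphi$: the element $(x'-A_{-1}x-g)(\varphi)\in X_{-1}$ annihilates all of $X_1^d$. Since $(X_1^d)'\simeq X_{-1}$, the space $X_1^d$ separates points of $X_{-1}$, so this element vanishes. Thus $x'-A_{-1}x=g\in L^p(0,T;X)$, so $x\in E_p(0,T;A)$ and $\calA x=(g,x_0)$.

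For the isomorphism statement, $\calA$ is bounded by Lemma~\ref{lem:Ep_Banach}, and $E_p(0,T;A)$ is a Banach space. Given any $(g,x_0)\in\calP'$, the mild solution is a weak solution by \cite{ball1977strongly} (see also \cite[Thm.~5.1.10]{curtain2020introduction}), so by the equivalence it lies in $E_p(0,T;A)$ and is a preimage; hence $\calA$ is surjective. Uniqueness of weak solutions gives injectivity, and the bounded inverse theorem makes $\calA^{-1}$ bounded. This also identifies $\calA^{-1}$ with the mild solution formula \eqref{eq:mild_sol_general}.

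The main obstacle is justifying the interchange of pairing and distributional derivative, and the passage between scalar distributions with $L^1$ derivatives and absolute continuity. I would handle the interchange by writing $x'(\varphi)=-\int_0^T\varphi' x\,\mathrm{d}t$ as a Bochner integral in $X\hookrightarrow X_{-1}$ and pulling the bounded functional $\langle\cdot,v\rangle$ inside the integral. The absolute continuity step then follows from the standard fact that a distribution with vanishing derivative is constant.
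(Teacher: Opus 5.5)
Your proposal is correct and follows essentially the same route as the paper. You test $x'-A_{-1}x$ against $v\in X_1^d$ via \eqref{eq:A_extrapolation}, use that $X_1^d$ separates the points of $X_{-1}\simeq(X_1^d)'$, and obtain bijectivity from existence and uniqueness of weak (= mild) solutions together with the bounded inverse theorem. The only minor deviation is in the absolute continuity of $t\mapsto\langle x(t),v\rangle_X$: you get it directly from the scalar fact that a continuous function with $L^1$ distributional derivative is absolutely continuous, whereas the paper first shows $x\in W^{1,p}(0,T;X_{-1})$ and then applies the bounded functional $\langle\cdot,v\rangle_{X_{-1},X_1^d}$. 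Both arguments are valid.
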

	\begin{proof}
		First, let $x\in C([0,T];X)$ be a weak solution of $x'=Ax+g$, $x(0)=x_0$. For all $v\in X_1^d=\dom{A^*}$ and $\varphi\in C_c^\infty(0,T)$, the absolute continuity of $t\mapsto\langle x(t),v\rangle_X$, \eqref{eq:weak_sol} and \eqref{eq:A_extrapolation} yield
		\begin{align*}
			\int_0^T\varphi'(t)\,\langle x(t),v\rangle_{X_{-1},X_1^d}\,\mathrm{d}t&=\int_0^T\varphi'(t)\,\langle x(t),v\rangle_X\,\mathrm{d}t
			=-\int_0^T\varphi(t)\,\tfrac{\mathrm{d}}{\mathrm{d}t}\langle x(t),v\rangle_X\,\mathrm{d}t\\
			&=-\int_0^T\varphi(t)\big(\langle x(t),A^*v\rangle_X+\langle g(t),v\rangle_X\big)\,\mathrm{d}t\\
			&=-\int_0^T\varphi(t)\,\langle A_{-1}x(t)+g(t),v\rangle_{X_{-1},X_1^d}\,\mathrm{d}t.
		\end{align*}
		As $X_{-1}\simeq(X_1^d)'$, the elements of $X_{-1}$ are separated by $X_1^d$, and we conclude
		\begin{equation*}
			x'-A_{-1}x=g\qquad\text{in }D'(0,T;X_{-1}).
		\end{equation*}
		Since $g\in L^p(0,T;X)$, this shows $x\in E_p(0,T;A)$. Together with $x(0)=x_0$, we obtain $\calA x=(g,x_0)$.

		Conversely, let $x\in E_p(0,T;A)$ satisfy $\calA x=(g,x_0)$. Then $x(0)=x_0$, and the above computation shows that \eqref{eq:weak_sol} holds in the sense of distributions on $(0,T)$ for all $v\in X_1^d$. It remains to show that $t\mapsto\langle x(t),v\rangle_X$ is absolutely continuous for every $v\in X_1^d$. By definition of $\calA$, we have $x'-A_{-1}x=g$ in $D'(0,T;X_{-1})$. Since $x\in C([0,T];X)$ and $A_{-1}\in L(X,X_{-1})$, we have
		\begin{equation*}
			A_{-1}x\in C([0,T];X_{-1})\hookrightarrow L^p(0,T;X_{-1}).
		\end{equation*}
		Moreover, $g\in L^p(0,T;X)\hookrightarrow L^p(0,T;X_{-1})$, and consequently
		\begin{equation*}
			x'=A_{-1}x+g\in L^p(0,T;X_{-1}),
		\end{equation*}
		i.e., $x\in W^{1,p}(0,T;X_{-1})$. Let $v\in X_1^d$. Since $(X_{-1})'\simeq X_1^d$, the mapping $X_{-1}\ni z\mapsto\langle z,v\rangle_{X_{-1},X_1^d}$ is a bounded linear functional. As bounded linear operators preserve Bochner--Sobolev regularity, the function $t\mapsto\langle x(t),v\rangle_{X_{-1},X_1^d}$ belongs to $W^{1,p}(0,T)$ with
		\begin{equation*}
			\tfrac{\mathrm{d}}{\mathrm{d}t}\langle x(t),v\rangle_{X_{-1},X_1^d}=\langle x'(t),v\rangle_{X_{-1},X_1^d}\qquad\text{for a.e. }t\in(0,T).
		\end{equation*}
		In particular, this function is absolutely continuous. Since $x(t)\in X$ for every $t\in[0,T]$ and the duality pairing extends the inner product on $X$, we have $\langle x(t),v\rangle_{X_{-1},X_1^d}=\langle x(t),v\rangle_X$. Consequently, $t\mapsto\langle x(t),v\rangle_X$ is absolutely continuous, and $x$ is a weak solution.

		By \cite[Thm.~5.1.10]{curtain2020introduction}, for every $(g,x_0)\in L^p(0,T;X)\times X$, there exists a unique weak solution, which is given by the mild solution. By the equivalence shown above, $\calA$ is bijective and \eqref{eq:mild_sol_general} holds. Finally, $\calA$ is bounded by Lemma~\ref{lem:Ep_Banach}, and its inverse is bounded by the open mapping theorem.
	\end{proof}
	From now on, we restrict ourselves to $p=q=2$, such that $\calP=\calP'=L^2(0,T;X)\times X$ is a Hilbert space, the above pairing coincides with its inner product, and $\calA:E_2(0,T;A)\to\calP'$ is an isomorphism.

	Analogously, we define the solution space of the backward problem
	\begin{equation*}
		E_2(0,T;-A^*)\coloneqq\left\{\lambda\in C([0,T];X):\lambda'+(A^*)_{-1}\lambda\in L^2(0,T;X)\right\},
	\end{equation*}
	endowed with the norm $\|\lambda\|_{E_2(0,T;-A^*)}\coloneqq\|\lambda\|_{C([0,T];X)}+\|\lambda'+(A^*)_{-1}\lambda\|_{L^2(0,T;X)}$, where $\lambda'+(A^*)_{-1}\lambda$ is understood in $D'(0,T;X_{-1}^d)$, and the \emph{backward operator}
	\begin{equation}\label{eq:bkw_cauchy}
		\Ad:E_2(0,T;-A^*)\to\calP,\qquad \Ad \lambda\coloneqq\stack{-\lambda'-(A^*)_{-1}\lambda}{\lambda(T)}.
	\end{equation}
	\begin{lmm}\label{cor:backward_iso}
		The operator $\Ad$ is an isomorphism and
		\begin{equation*}
			(\Ad)^{-1}\stack{g}{\lambda_T}=\mathbb{T}^*(T-\cdot)\lambda_T+\int_\cdot^T\mathbb{T}^*(s-\cdot)g(s)\,\mathrm{d}s.
		\end{equation*}
	\end{lmm}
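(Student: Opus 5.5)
Our plan is to reduce Lemma~\ref{cor:backward_iso} to Theorem~\ref{thm:inv_of_A} by time reversal, applied to the adjoint generator $A^*$. We use that $A^*$ generates the $C_0$-semigroup $(\mathbb{T}^*(t))_{t\geq0}$ and that $(A^*)^*=A$. Hence Theorem~\ref{thm:inv_of_A} with $p=2$, with $A$ replaced by $A^*$, and with the extrapolation space $X_{-1}^d$ in place of $X_{-1}$, shows the following. The forward operator $\calA^*:E_2(0,T;A^*)\to L^2(0,T;X)\times X$, $\calA^*q\coloneqq(q'-(A^*)_{-1}q,\,q(0))$, is an isomorphism with inverse
\[
(g,q_0)\mapsto \mathbb{T}^*(\cdot)q_0+\int_0^\cdot\mathbb{T}^*(\cdot-s)g(s)\,\mathrm{d}s .
\]
Lemma~\ref{lem:Ep_Banach} applies verbatim in this setting, so $E_2(0,T;A^*)$ is a Banach space.

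Next we introduce the reflection $(\mathcal R_Tv)(t)\coloneqq v(T-t)$. On $C([0,T];X)$ it is an isometric involution. On $L^2(0,T;X)$ it is also an isometric involution. On distributions it acts by $(\mathcal R_T w)(\varphi)\coloneqq w(\mathcal R_T\varphi)$. The key identity is
\[
(\mathcal R_T\lambda)'=-\mathcal R_T(\lambda')\quad\text{in }D'(0,T;X_{-1}^d),\qquad \lambda\in C([0,T];X),
\]
which follows from the substitution $t\mapsto T-t$ in $-\int_0^T\varphi'(t)\lambda(T-t)\,\mathrm{d}t$. Since $(A^*)_{-1}$ acts pointwise in time, it commutes with $\mathcal R_T$. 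Together these give
\[
(\mathcal R_T\lambda)'-(A^*)_{-1}\mathcal R_T\lambda=-\mathcal R_T\bigl(\lambda'+(A^*)_{-1}\lambda\bigr).
\]
Consequently, $\lambda\in E_2(0,T;-A^*)$ if and only if $\mathcal R_T\lambda\in E_2(0,T;A^*)$, and $\mathcal R_T$ is an isometric isomorphism between these two spaces with respect to the given norms. Moreover $(\mathcal R_T\lambda)(0)=\lambda(T)$.

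These identities yield the factorization
\[
\Ad=\diag(\mathcal R_T,I)\,\calA^*\,\mathcal R_T .
\]
Here $\mathcal R_T:E_2(0,T;-A^*)\to E_2(0,T;A^*)$, and $\diag(\mathcal R_T,I)$ is an isometric isomorphism of $\calP=L^2(0,T;X)\times X$. As a composition of isomorphisms, $\Ad$ is an isomorphism. Its inverse is $\mathcal R_T(\calA^*)^{-1}\diag(\mathcal R_T,I)$. Evaluating this with the mild solution formula gives, at time $t$,
\[
\mathbb{T}^*(T-t)\lambda_T+\int_0^{T-t}\mathbb{T}^*(T-t-s)g(T-s)\,\mathrm{d}s .
\]
The substitution $r=T-s$ turns the integral into $\int_t^T\mathbb{T}^*(r-t)g(r)\,\mathrm{d}r$, which is the stated formula.

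The only real obstacle is justifying that Theorem~\ref{thm:inv_of_A} transfers to $A^*$ with the correct extrapolation space. Its proof uses the duality $X_{-1}\simeq(X_1^d)'$ and the characterization \eqref{eq:A_extrapolation}. The paper already records the mirrored versions: $(X_1)'\simeq X_{-1}^d$ and $\langle (A^*)_{-1}z,v\rangle_{X_{-1}^d,X_1}=\langle z,Av\rangle_X$. Weak solutions for $A^*$ are then tested against $\dom{A}=\dom{(A^*)^*}$, and \cite[Thm.~5.1.10]{curtain2020introduction} applies to the generator $A^*$. So the argument carries over verbatim, and we would state this substitution explicitly rather than redo the proof.
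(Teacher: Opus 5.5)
Your proposal is correct and follows the paper's proof essentially verbatim. The paper likewise applies Theorem~\ref{thm:inv_of_A} to $A^*$, uses the time reversal $\mathcal R_T$ to obtain the factorization $\Ad=\diag(\mathcal R_T,I)\,\calA_{A^*}\mathcal R_T$, and derives the inverse formula from \eqref{eq:mild_sol_general} by substitution; your remark on transferring Theorem~\ref{thm:inv_of_A} to $A^*$ via $(X_1)'\simeq X_{-1}^d$ and $(A^*)^*=A$ only spells out what the paper leaves implicit.
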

	\begin{proof}
		Denote by $E_2(0,T;A^*)$ and $\calA_{A^*}$ the space and the operator of Theorem~\ref{thm:inv_of_A} with $A$ replaced by $A^*$, such that $\calA_{A^*}$ is an isomorphism. For the time reversal $(\mathcal R_T\lambda)(t)\coloneqq \lambda(T-t)$, extended to distributions via $(\mathcal R_Tw)(\varphi)\coloneqq w(\varphi(T-\cdot))$, the substitution $s=T-t$ yields $(\mathcal R_T\lambda)'=-\mathcal R_T\lambda'$ and hence
		\begin{equation*}
			(\mathcal R_T\lambda)'-(A^*)_{-1}\mathcal R_T\lambda=-\mathcal R_T\big(\lambda'+(A^*)_{-1}\lambda\big).
		\end{equation*}
		Thus, $\mathcal R_T$ is an isometric isomorphism from $E_2(0,T;-A^*)$ onto $E_2(0,T;A^*)$, and $\Ad=\diag(\mathcal R_T,I)\,\calA_{A^*}\mathcal R_T$ is an isomorphism. The formula follows from \eqref{eq:mild_sol_general} applied to $A^*$ by substitution.
	\end{proof}

	\smallskip

	\noindent\textbf{Dual companion of the forward operator.} The link between the forward and the backward operator is given by the following integration by parts formula.
	\begin{prpstn}[Green identity]\label{prop:Green}
		Let $h\in E_2(0,T;A)$ and $\lambda\in E_2(0,T;-A^*)$, and define
		\begin{equation*}
			f_h\coloneqq h'-A_{-1}h\in L^2(0,T;X),\qquad g_\lambda\coloneqq \lambda'+(A^*)_{-1}\lambda\in L^2(0,T;X).
		\end{equation*}
		Then the function $t\mapsto\langle \lambda(t),h(t)\rangle_X$ belongs to $W^{1,1}(0,T)$, and for all $0\le s\le t\le T$,
		\begin{equation}\label{eq:int_by_parts}
			\langle \lambda(t),h(t)\rangle_X-\langle \lambda(s),h(s)\rangle_X=\int_s^t\langle g_\lambda(\tau),h(\tau)\rangle_X+\langle \lambda(\tau),f_h(\tau)\rangle_X\,\mathrm{d}\tau.
		\end{equation}
	\end{prpstn}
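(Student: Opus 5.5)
The plan is to prove the Green identity \eqref{eq:int_by_parts} first for smooth functions and then pass to the limit by density, using the isomorphisms of Theorem~\ref{thm:inv_of_A} and Lemma~\ref{cor:backward_iso} to produce the approximations. Given $h\in E_2(0,T;A)$ and $\lambda\in E_2(0,T;-A^*)$, write $h=\calA^{-1}(f_h,h(0))$ and $\lambda=(\Ad)^{-1}(-g_\lambda,\lambda(T))$. Then choose sequences $f_n,g_n\in C^1([0,T];X)$, $h_{0,n}\in\dom{A}$ and $\lambda_{T,n}\in\dom{A^*}$ with $f_n\to f_h$ and $g_n\to g_\lambda$ in $L^2(0,T;X)$, and $h_{0,n}\to h(0)$ and $\lambda_{T,n}\to\lambda(T)$ in $X$. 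Define $h_n\coloneqq\calA^{-1}(f_n,h_{0,n})$ and $\lambda_n\coloneqq(\Ad)^{-1}(-g_n,\lambda_{T,n})$. By standard semigroup theory, e.g.\ \cite[Thm.~5.1.3 ff.]{curtain2020introduction}, mild solutions with $C^1$ inhomogeneity and initial value in the domain are classical solutions. Hence $h_n\in C^1([0,T];X)\cap C([0,T];\dom{A})$ with $h_n'=Ah_n+f_n$, and analogously $\lambda_n'=-A^*\lambda_n+g_n$ in $X$.

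For these classical solutions, the map $t\mapsto\langle\lambda_n(t),h_n(t)\rangle_X$ is $C^1$ by the product rule in the Hilbert space $X$. Its derivative is
\[
\langle -A^*\lambda_n+g_n,h_n\rangle_X+\langle\lambda_n,Ah_n+f_n\rangle_X=\langle g_n,h_n\rangle_X+\langle\lambda_n,f_n\rangle_X,
\]
because the terms $\langle A^*\lambda_n,h_n\rangle_X$ and $\langle\lambda_n,Ah_n\rangle_X$ cancel by the definition of $A^*$, using $h_n(t)\in\dom A$ and $\lambda_n(t)\in\dom{A^*}$. Integrating from $s$ to $t$ gives \eqref{eq:int_by_parts} for $(h_n,\lambda_n)$.

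Passing to the limit uses the bounded inverses. Since $\calA^{-1}$ and $(\Ad)^{-1}$ are bounded, $h_n\to h$ and $\lambda_n\to\lambda$ in $C([0,T];X)$. Therefore the left-hand sides converge pointwise for every $s,t$. The integrands $\langle g_n,h_n\rangle_X+\langle\lambda_n,f_n\rangle_X$ converge in $L^1(0,T)$ to $\langle g_\lambda,h\rangle_X+\langle\lambda,f_h\rangle_X$, since each is a product of an $L^2$-convergent factor and a factor converging in $C([0,T];X)\subset L^2(0,T;X)$. This yields \eqref{eq:int_by_parts} for $h,\lambda$. Because the limit integrand lies in $L^1(0,T)$, the identity shows that $t\mapsto\langle\lambda(t),h(t)\rangle_X$ is absolutely continuous with that derivative, i.e.\ it belongs to $W^{1,1}(0,T)$.

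The main obstacle I expect is justifying the classical-solution regularity of the approximants: $C^1$ forcing together with an initial value in $\dom A$ gives $h_n\in C^1([0,T];X)$ and $h_n(t)\in\dom A$ for all $t$. This is a standard result for $C_0$-semigroups (Pazy, Cor.~4.2.5), and I would cite it. I also need to check that $h_n$ really lies in $E_2(0,T;A)$ with $h_n'-A_{-1}h_n=f_n$. This follows from Theorem~\ref{thm:inv_of_A}, since $h_n$ is the mild solution and hence the weak solution. For the backward problem I would apply the same argument after the time reversal used in Lemma~\ref{cor:backward_iso}.
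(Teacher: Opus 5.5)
Your proposal is correct and follows essentially the same route as the paper. You obtain classical approximants by applying the bounded inverses $\calA^{-1}$ and $(\Ad)^{-1}$ to data in $C^1([0,T];X)$ with initial/terminal values in $\dom{A}$ resp.\ $\dom{A^*}$, verify the identity by the product rule where the $A$- and $A^*$-terms cancel, and pass to the limit using convergence in $C([0,T];X)$ and $L^2(0,T;X)$; the only difference is cosmetic, as the paper phrases the approximation step as density of $C^1([0,T];X)\cap C([0,T];X_1)$ in $E_2(0,T;A)$ (and of its analogue in $E_2(0,T;-A^*)$ via time reversal), whereas you construct the approximating sequences directly.
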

	\begin{proof} Let $x_0\in\dom{A}$, $g\in C^1([0,T];X)$ and $x\coloneqq\calA^{-1}(g,x_0)$ as in \eqref{eq:mild_sol_general}. By \cite[Thm. 5.1.3]{curtain2020introduction}, $x\in C^1([0,T];X)$, and $A_{-1}x=x'-g\in C([0,T];X)$ implies $x\in C([0,T];X_1)$ by the consequence of \eqref{eq:A_extrapolation} stated above. As $\dom{A}\times C^1([0,T];X)$ is dense in $X\times L^2(0,T;X)$ and $\calA^{-1}$ is continuous, the set $C^1([0,T];X)\cap C([0,T];X_1)$ is dense in $E_2(0,T;A)$. Applying this to $A^*$ and using the time reversal of the proof of Lemma~\ref{cor:backward_iso}, which leaves $C^1([0,T];X)\cap C([0,T];X_1^d)$ invariant, the latter set is dense in $E_2(0,T;-A^*)$.

		Let $h\in C^1([0,T];X)\cap C([0,T];X_1)$ and $\lambda\in C^1([0,T];X)\cap C([0,T];X_1^d)$. Then
		\begin{equation*}
			f_h=h'-A_{-1}h=h'-Ah\in C([0,T];X),\qquad g_\lambda=\lambda'+(A^*)_{-1}\lambda=\lambda'+A^*\lambda\in C([0,T];X).
		\end{equation*}
		By the product rule and as $\langle \lambda(t),Ah(t)\rangle_X=\langle A^*\lambda(t),h(t)\rangle_X$, we obtain
		\begin{align*}
			\tfrac{\mathrm{d}}{\mathrm{d}t}\langle \lambda(t),h(t)\rangle_X&=\langle \lambda'(t),h(t)\rangle_X+\langle \lambda(t),h'(t)\rangle_X\\
			&=\langle \lambda'(t)+A^*\lambda(t),h(t)\rangle_X+\langle \lambda(t),h'(t)-Ah(t)\rangle_X\\
			&=\langle g_\lambda(t),h(t)\rangle_X+\langle \lambda(t),f_h(t)\rangle_X.
		\end{align*}
		Integrating from $s$ to $t$ yields \eqref{eq:int_by_parts} for $h\in C^1([0,T];X)\cap C([0,T];X_1)$ and $\lambda\in C^1([0,T];X)\cap C([0,T];X_1^d)$.

		By the beginning of this proof, there are sequences $(h_n)_{n\in\N}\subset C^1([0,T];X)\cap C([0,T];X_1)$ and $(\lambda_n)_{n\in\N}\subset C^1([0,T];X)\cap C([0,T];X_1^d)$ such that
		\begin{equation*}
			h_n\to h\quad\text{in }E_2(0,T;A),\qquad \lambda_n\to \lambda\quad\text{in }E_2(0,T;-A^*),
		\end{equation*}
		as $n\to\infty$. Clearly, the identity \eqref{eq:int_by_parts} holds for $(h_n,\lambda_n)$ with $f_{h_n}=h_n'-Ah_n$ and $g_{\lambda_n}=\lambda_n'+A^*\lambda_n$. Since $h_n\to h$ and $\lambda_n\to \lambda$ in $C([0,T];X)$, the boundary terms converge, i.e.,
		\begin{equation*}
			\langle \lambda_n(t),h_n(t)\rangle_X-\langle \lambda_n(s),h_n(s)\rangle_X\to\langle \lambda(t),h(t)\rangle_X-\langle \lambda(s),h(s)\rangle_X,
		\end{equation*}
		as $n\to\infty$. Moreover, $f_{h_n}\to f_h$ and $g_{\lambda_n}\to g_\lambda$ in $L^2(0,T;X)$. We estimate
		\begin{align*}
			&\left|\int_s^t\Big(\langle g_{\lambda_n}(\tau),h_n(\tau)\rangle_X-\langle g_\lambda(\tau),h(\tau)\rangle_X\Big)\,\mathrm{d}\tau\right|\\
			&\hspace{2cm}=\left|\int_s^t\Big(\langle g_{\lambda_n}(\tau)-g_\lambda(\tau),h_n(\tau)\rangle_X+\langle g_\lambda(\tau),h_n(\tau)-h(\tau)\rangle_X\Big)\,\mathrm{d}\tau\right|\\
			&\hspace{2cm}\le\|g_{\lambda_n}-g_\lambda\|_{L^2(0,T;X)}\|h_n\|_{L^2(0,T;X)}+\|g_\lambda\|_{L^2(0,T;X)}\|h_n-h\|_{L^2(0,T;X)},
		\end{align*}
		which tends to $0$ as $n\to\infty$, as $(h_n)_{n\in\N}$ is bounded in $L^2(0,T;X)$. Analogously, $\int_s^t\langle \lambda_n,f_{h_n}\rangle_X\,\mathrm{d}\tau$ converges to $\int_s^t\langle \lambda,f_h\rangle_X\,\mathrm{d}\tau$. Passing to the limit yields \eqref{eq:int_by_parts}. As the integrand on the right-hand side of \eqref{eq:int_by_parts} belongs to $L^1(0,T)$, the function $t\mapsto\langle \lambda(t),h(t)\rangle_X$ belongs to $W^{1,1}(0,T)$.
	\end{proof}

	To balance the optimality system, we define the trace embeddings
	\begin{align*}
		E_{T,A}&:E_2(0,T;A)\to\calP, & E_{T,A}h&\coloneqq(h,h(T)),\\
		E_{0,-A^*}&:E_2(0,T;-A^*)\to\calP, & E_{0,-A^*}\lambda&\coloneqq(\lambda,\lambda(0)).
	\end{align*}
	\begin{lmm}\label{cor:ETA}
		The maps $E_{T,A}$ and $E_{0,-A^*}$ are continuous and injective with dense range. In particular, the adjoint $E_{T,A}'\in L(\calP,E_2(0,T;A)')$, given by
		\begin{equation*}
			(E_{T,A}'(f,x_T))(x)=\int_0^T\langle f(t),x(t)\rangle_X\,\mathrm{d}t+\langle x_T,x(T)\rangle_X,\qquad x\in E_2(0,T;A),
		\end{equation*}
		is injective.
	\end{lmm}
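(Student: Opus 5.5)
Continuity is immediate: for $h\in E_2(0,T;A)$ we have $\|h\|_{L^2(0,T;X)}\le\sqrt T\,\|h\|_{C([0,T];X)}$ and $\|h(T)\|_X\le\|h\|_{C([0,T];X)}$, hence $\|E_{T,A}h\|_\calP\le(\sqrt T+1)\|h\|_{E_2(0,T;A)}$. The same estimate holds for $E_{0,-A^*}$. Injectivity is also trivial, since the first component of $E_{T,A}h$ is $h$ itself, viewed as an element of $L^2(0,T;X)$. A continuous function that vanishes a.e.\ vanishes identically, so $E_{T,A}h=0$ forces $h=0$. The stated formula for $E_{T,A}'$ is just the definition of the Banach space adjoint with respect to the pairing on $\calP$ (here $\calP=\calP'$ via the inner product). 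Injectivity of $E_{T,A}'$ then follows from injectivity with dense range, by the general remark preceding Lemma~\ref{lem:schiela_38_equiv}.

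The only real step is density of the range. I will show that $E_{T,A}(E_2(0,T;A))$ is dense in $L^2(0,T;X)\times X$, using the isomorphism of Theorem~\ref{thm:inv_of_A}. Fix $(f,x_T)\in\calP$ and $\varepsilon>0$. First, $C([0,T];X)$-functions, in particular smooth ones, are dense in $L^2(0,T;X)$. Next, I claim that $E_2(0,T;A)$ contains $C^1([0,T];X)\cap C([0,T];X_1)$, because for such $h$ we have $h'-A_{-1}h=h'-Ah\in C([0,T];X)$. Functions of the form $\varphi(t)z$ with $\varphi\in C^\infty([0,T])$ and $z\in\dom{A}$ lie in this set. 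Their linear span is dense in $L^2(0,T;X)$, by density of $\dom A$ in $X$ and density of $C^\infty([0,T])\otimes X$ in $L^2(0,T;X)$. So I can pick such an $h_1$ with $\|h_1-f\|_{L^2}<\varepsilon$.

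It remains to correct the terminal value without spoiling the $L^2$-approximation. Choose $z\in\dom{A}$ with $\|z-(x_T-h_1(T))\|_X<\varepsilon$. Also choose $\chi\in C^\infty([0,T])$ with $\chi(T)=1$ and $\|\chi\|_{L^2(0,T)}\le\varepsilon/(\|z\|_X+1)$; this is possible by taking a bump concentrated near $T$. Then $h\coloneqq h_1+\chi z\in C^1([0,T];X)\cap C([0,T];X_1)\subset E_2(0,T;A)$ satisfies
\begin{equation*}
\|h-f\|_{L^2(0,T;X)}<2\varepsilon,\qquad \|h(T)-x_T\|_X<\varepsilon.
\end{equation*}
This shows density. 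The key point, and the only mild obstacle, is that the trace at $T$ is not continuous with respect to the $L^2$-component, so the two components can be prescribed essentially independently.

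The argument for $E_{0,-A^*}$ is identical, with $\dom{A^*}$ in place of $\dom A$, the bump concentrated near $0$, and $\lambda'+(A^*)_{-1}\lambda=\lambda'+A^*\lambda\in C([0,T];X)$ for smooth $\dom{A^*}$-valued $\lambda$. Alternatively, one can apply the time reversal $\mathcal R_T$ from the proof of Lemma~\ref{cor:backward_iso}, which maps $E_2(0,T;-A^*)$ onto $E_2(0,T;A^*)$ and interchanges the traces at $0$ and $T$; this reduces the claim to the first case with $A$ replaced by $A^*$.
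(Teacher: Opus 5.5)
Your proof is correct and follows essentially the same route as the paper: the same continuity estimate, density via smooth $\dom{A}$-valued approximants of the $L^2$ component plus a bump concentrated near $T$ to fix the terminal value, and injectivity of $E_{T,A}'$ from $\ker E_{T,A}'=\ran{E_{T,A}}^\perp=\{0\}$. The only differences are minor. The paper takes the approximants compactly supported in $(0,T)$, so no correction by $h_1(T)$ is needed. Your announced use of Theorem~\ref{thm:inv_of_A} never actually enters the argument, and it is not needed.
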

	\begin{proof}
		Let $h\in E_2(0,T;A)$. Since $\|h\|_{L^2(0,T;X)}\le T^{1/2}\|h\|_{C([0,T];X)}$ and $\|h(T)\|_X\le\|h\|_{C([0,T];X)}$, we obtain
		\begin{equation*}
			\|E_{T,A}h\|_\calP\le(T^{1/2}+1)\|h\|_{C([0,T];X)}\le(T^{1/2}+1)\|h\|_{E_2(0,T;A)},
		\end{equation*}
		i.e., $E_{T,A}$ is continuous. Clearly, $E_{T,A}$ is injective. We next prove that $E_{T,A}$ has dense range. Note that $C^1([0,T];X_1)\subset E_2(0,T;A)$, as $h'-A_{-1}h=h'-Ah\in C([0,T];X)$ for $h\in C^1([0,T];X_1)$. Let $(g,\xi)\in\calP$. As $X_1$ is dense in $X$, we may choose $(g_n)_{n\in\N}\subset C_c^1((0,T);X_1)$ and $(\xi_n)_{n\in\N}\subset X_1$ with
		\begin{equation*}
			g_n\to g\quad\text{in }L^2(0,T;X),\qquad \xi_n\to\xi\quad\text{in }X,
		\end{equation*}
		as $n\to\infty$. Further, let $\eta_n\in C^1([0,T])$ with $0\le\eta_n\le1$, $\eta_n(T)=1$ and $\operatorname{supp}\eta_n\subset[T-\tfrac1n,T]$, such that $\|\eta_n\xi_n\|_{L^2(0,T;X)}\le n^{-1/2}\|\xi_n\|_X\to0$. Then $h_n\coloneqq g_n+\eta_n\xi_n\in C^1([0,T];X_1)\subset E_2(0,T;A)$ satisfies $h_n(T)=\xi_n\to\xi$ in $X$ and
		\begin{equation*}
			\|h_n-g\|_{L^2(0,T;X)}\le\|g_n-g\|_{L^2(0,T;X)}+\|\eta_n\xi_n\|_{L^2(0,T;X)}\to0,
		\end{equation*}
		such that $E_{T,A}h_n=(h_n,h_n(T))\to(g,\xi)$ in $\calP$. This proves that $E_{T,A}$ has dense range, and $E_{T,A}'$ is injective as $\ker E_{T,A}'=\ran{E_{T,A}}^\perp=\{0\}$. The assertions for $E_{0,-A^*}$ follow analogously, using $C^1([0,T];X_1^d)\subset E_2(0,T;-A^*)$ and functions $\eta_n$ with $\eta_n(0)=1$ and $\operatorname{supp}\eta_n\subset[0,\tfrac1n]$.
	\end{proof}

	The Green identity yields the following relation between the forward operator, the backward operator and the trace embeddings.
	\begin{prpstn}\label{prop:cauchy_companion_identity}
		For all $\lambda\in E_2(0,T;-A^*)$ and $h\in E_2(0,T;A)$, we have
		\begin{equation*}
			(\calA'E_{0,-A^*}\lambda)(h)=(E_{T,A}'\Ad \lambda)(h),
		\end{equation*}
		i.e., $\calA'E_{0,-A^*}=E_{T,A}'\Ad:E_2(0,T;-A^*)\to E_2(0,T;A)'$.
	\end{prpstn}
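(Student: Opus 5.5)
The identity follows by unwinding both sides and applying the Green identity of Proposition~\ref{prop:Green} with $s=0$ and $t=T$. Fix $\lambda\in E_2(0,T;-A^*)$ and $h\in E_2(0,T;A)$. We write $f_h=h'-A_{-1}h$ and $g_\lambda=\lambda'+(A^*)_{-1}\lambda$, both in $L^2(0,T;X)$.

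First we compute the left-hand side. By definition of the Banach space adjoint and the pairing on $\calP'\times\calP$,
\begin{equation*}
(\calA'E_{0,-A^*}\lambda)(h)=\left\langle\calA h,\stack{\lambda}{\lambda(0)}\right\rangle_{\calP',\calP}=\int_0^T\langle f_h(t),\lambda(t)\rangle_X\,\mathrm{d}t+\langle h(0),\lambda(0)\rangle_X.
\end{equation*}
Next we compute the right-hand side. Since $\Ad\lambda=(-g_\lambda,\lambda(T))\in\calP$, the formula for $E_{T,A}'$ in Lemma~\ref{cor:ETA} gives
\begin{equation*}
(E_{T,A}'\Ad\lambda)(h)=-\int_0^T\langle g_\lambda(t),h(t)\rangle_X\,\mathrm{d}t+\langle\lambda(T),h(T)\rangle_X.
\end{equation*}

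Subtracting the two expressions, the claim reduces to
\begin{equation*}
\langle\lambda(T),h(T)\rangle_X-\langle\lambda(0),h(0)\rangle_X=\int_0^T\langle g_\lambda,h\rangle_X+\langle\lambda,f_h\rangle_X\,\mathrm{d}\tau.
\end{equation*}
This is exactly \eqref{eq:int_by_parts} with $s=0$ and $t=T$. Since $X$ is a real Hilbert space, the inner products are symmetric, so the order of the arguments does not matter. As $h$ and $\lambda$ were arbitrary, the operator identity $\calA'E_{0,-A^*}=E_{T,A}'\Ad$ holds on $E_2(0,T;-A^*)$.

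No step here is a real obstacle. The analytic work, namely the density argument that justifies integration by parts for merely mild-type solutions, is already contained in Proposition~\ref{prop:Green}. The only point needing care is sign and pairing bookkeeping: $\calP$ and $\calP'$ are identified through the $L^2\times X$ pairing, the first component of $\Ad\lambda$ carries a minus sign, and $\calA h$ contributes the initial trace while $\Ad\lambda$ contributes the terminal trace.
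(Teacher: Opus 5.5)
Your proof is correct and is essentially the paper's argument: both pairings are expanded by definition, and their difference vanishes by the Green identity \eqref{eq:int_by_parts} with $s=0$ and $t=T$. The paper swaps the arguments of the inner products exactly as you do, and if $X$ were complex, taking the complex conjugate of \eqref{eq:int_by_parts} would yield the same conclusion.
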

	\begin{proof}
		By definition of the operators involved, we have
		\begin{align*}
			&(\calA'E_{0,-A^*}\lambda)(h)-(E_{T,A}'\Ad \lambda)(h)\\
            &=\int_0^T\langle h'(t)-A_{-1}h(t),\lambda(t)\rangle_X-\langle -\lambda'(t)-(A^*)_{-1}\lambda(t),h(t)\rangle_X\,\mathrm{d}t
            +\langle h(0),\lambda(0)\rangle_X-\langle \lambda(T),h(T)\rangle_X\\
            &=0,
		\end{align*}
		where the last equality follows from \eqref{eq:int_by_parts} with $s=0$ and $t=T$.
	\end{proof}
	\begin{thrm}[Dual companion of the forward operator]\label{thm:cauchy_companion}
		Let $\calA_E$ be the unbounded operator of Lemma~\Rref{lem:schiela_38_equiv} with $\calH=\calP$ and $E=E_{T,A}$, i.e., $\calA_E(h,h(T))\coloneqq\calA h$ with domain $E_{T,A}(E_2(0,T;A))\subset\calP$. Then
		\begin{gather*}
			\dom{\calA_E'}=E_{0,-A^*}(E_2(0,T;-A^*)),\\
			\calA_E'\stack{\lambda}{\lambda(0)}=\Ad \lambda=\stack{-\lambda'-(A^*)_{-1}\lambda}{\lambda(T)}\quad\text{for } \lambda\in E_2(0,T;-A^*),
		\end{gather*}
		and $E_{0,-A^*}$ is an isomorphism from $E_2(0,T;-A^*)$ onto $\dom{\calA_E'}$ endowed with the graph norm. In particular, the dual companion of $(E_2(0,T;A),E_{T,A},\calA)$ is represented by $(E_2(0,T;-A^*),\allowbreak E_{0,-A^*},\allowbreak\Ad)$.
	\end{thrm}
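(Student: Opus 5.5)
The plan is to apply Proposition~\ref{prop:companion_unique} with $\calX=E_2(0,T;A)$, $E=E_{T,A}$, $\calH=\calP$ (a Hilbert space, hence reflexive), and the candidate triple $\tXd=E_2(0,T;-A^*)$, $\tEd=E_{0,-A^*}$, $\tAd=\Ad$.

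First we check the hypotheses. Proposition~\ref{prop:dual_companion} requires the following, and each is already available:
\begin{itemize}
\item $\calA:E_2(0,T;A)\to\calP'$ is an isomorphism, by Theorem~\ref{thm:inv_of_A} with $p=2$.
\item $E_{T,A}$ is continuous and injective with dense range, by Lemma~\ref{cor:ETA}.
\item $\calP$ is reflexive.
\end{itemize}
For the candidate, $E_2(0,T;-A^*)$ is a Banach space by the argument of Lemma~\ref{lem:Ep_Banach}, applied after time reversal or directly with $A^*$ in place of $A$. Next, $E_{0,-A^*}$ is linear, and $\Ad:E_2(0,T;-A^*)\to\calP=\calH'$ is an isomorphism by Lemma~\ref{cor:backward_iso}. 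Here we identify $\calH'=\calP'$ with $\calP$ via the $L^2(0,T;X)\times X$ inner product, so that $E'=E_{T,A}'$ is exactly the operator described in Lemma~\ref{cor:ETA}.

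The key identity $\calA'\tEd=E'\tAd$ is exactly Proposition~\ref{prop:cauchy_companion_identity}, which comes from the Green identity. Proposition~\ref{prop:companion_unique} therefore gives three things:
\begin{itemize}
\item $E_{0,-A^*}(E_2(0,T;-A^*))=\Xd=\dom{\calA_E'}$;
\item $\calA_E'E_{0,-A^*}=\Ad$, which is the stated formula $\calA_E'(\lambda,\lambda(0))=\Ad\lambda$;
\item $R=(\Ad)^{-1}\Ad=\ldots$ is an isomorphism, more precisely $R:E_2(0,T;-A^*)\to\Xd$ with $E_{0,-A^*}=\Ed R$.
\end{itemize}
Since $\Ed$ is the inclusion, $R$ coincides with $E_{0,-A^*}$ viewed as a map into $\dom{\calA_E'}$ with the graph norm. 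Hence $E_{0,-A^*}$ is an isomorphism onto $\dom{\calA_E'}$, and the triple is a representation of the dual companion.

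I expect no real obstacle, since the analytic work is contained in Theorem~\ref{thm:inv_of_A}, the Green identity and Lemma~\ref{cor:backward_iso}. Two points need care. The first is the bookkeeping of the identification $\calH'\simeq\calP$: the pairing used for $E_{T,A}'$ in Lemma~\ref{cor:ETA} must match the one used for $\calA'$ in Proposition~\ref{prop:cauchy_companion_identity}. The second is making explicit that the graph norm on $\Xd$ is the one for which $R$ is an isomorphism. This holds because Proposition~\ref{prop:dual_companion} defines $\|\cdot\|_{\Xd}$ as precisely that graph norm.
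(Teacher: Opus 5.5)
Your argument is correct. It differs from the paper's main proof, and is in fact the alternative route the paper itself points out in the remark following Theorem~\ref{thm:cauchy_companion}.

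The paper's main proof is direct. For $(\lambda,\lambda_0)\in\dom{\calA_E'}$ with $\calA_E'(\lambda,\lambda_0)=(z,z_T)$:
\begin{itemize}
\item It tests with $h=\varphi v$, $\varphi\in C_c^\infty(0,T)$, $v\in X_1$, to get $\lambda'=-z-(A^*)_{-1}\lambda$ in $D'(0,T;X_{-1}^d)$, hence $\lambda\in W^{1,2}(0,T;X_{-1}^d)$ with traces.
\item It identifies $\lambda(T)=z_T$ and $\lambda(0)=\lambda_0$ by integrating by parts against $h\in C^1([0,T];X_1)$.
\item It obtains $\lambda\in C([0,T];X)$ from the variation of constants formula for the extrapolated adjoint semigroup on $X_{-1}^d$.
\end{itemize}
Only the reverse inclusion uses Proposition~\ref{prop:cauchy_companion_identity}. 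The graph-norm equivalence is then checked by hand via Lemmas~\ref{cor:ETA} and~\ref{cor:backward_iso}.

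In your route, the inclusion $\dom{\calA_E'}\subset E_{0,-A^*}(E_2(0,T;-A^*))$ is absorbed into Proposition~\ref{prop:companion_unique}. Let $\calA_E'p=\eta$ and let $\Ad$ denote the backward operator of Lemma~\ref{cor:backward_iso}. Then the Green identity gives $\calA'p=E_{T,A}'\eta=\calA'E_{0,-A^*}(\Ad)^{-1}\eta$. Injectivity of $\calA'$ then forces $p=E_{0,-A^*}(\Ad)^{-1}\eta$.

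Your approach therefore replaces the uniqueness theory for solutions in the extrapolation space with the surjectivity of $\calA$, i.e.\ existence of mild solutions of the forward problem. The norm statement also comes out of Proposition~\ref{prop:companion_unique} automatically. What the direct proof buys is transparency: it shows how an arbitrary element of $\dom{\calA_E'}$ acquires the backward equation, the identification $\lambda_0=\lambda(0)$ and continuity in time, rather than inferring all of this from uniqueness.

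One cosmetic point concerns ``$R=(\Ad)^{-1}\Ad$''. There the first $\Ad$ must be the abstract companion operator of Proposition~\ref{prop:dual_companion}, and the second the concrete backward operator \eqref{eq:bkw_cauchy}. As written it reads like the identity, although your subsequent description of $R$ is right.
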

	\begin{proof}
		As $\calP$ is identified with its dual space, the definition of the Banach space adjoint yields that $(\lambda,\lambda_0)\in\calP$ belongs to $\dom{\calA_E'}$ if and only if there exists $(z,z_T)\in\calP$ such that
		\begin{equation}\label{eq:adjoint_inner_product}
			\int_0^T\langle \lambda(t),h'(t)-A_{-1}h(t)\rangle_X\,\mathrm{d}t+\langle \lambda_0,h(0)\rangle_X=\int_0^T\langle z(t),h(t)\rangle_X\,\mathrm{d}t+\langle z_T,h(T)\rangle_X
		\end{equation}
		for all $h\in E_2(0,T;A)$, and in this case, $\calA_E'(\lambda,\lambda_0)=(z,z_T)$.

		We first show $\dom{\calA_E'}\subset E_{0,-A^*}(E_2(0,T;-A^*))$. Let $(\lambda,\lambda_0)\in\dom{\calA_E'}$ and $(z,z_T)\coloneqq\calA_E'(\lambda,\lambda_0)$. For $\varphi\in C_c^\infty(0,T)$ and $v\in X_1$, the function $h\coloneqq\varphi v$ belongs to $C_c^1((0,T);X_1)\subset E_2(0,T;A)$ with $h(0)=h(T)=0$ and $A_{-1}h=\varphi Av$. As $\langle \lambda(t),Av\rangle_X=\langle(A^*)_{-1}\lambda(t),v\rangle_{X_{-1}^d,X_1}$, the identity \eqref{eq:adjoint_inner_product} yields
		\begin{equation*}
			\int_0^T\varphi'(t)\,\langle \lambda(t),v\rangle_{X_{-1}^d,X_1}\,\mathrm{d}t=\int_0^T\varphi(t)\,\langle z(t)+(A^*)_{-1}\lambda(t),v\rangle_{X_{-1}^d,X_1}\,\mathrm{d}t
		\end{equation*}
		for all $\varphi\in C_c^\infty(0,T)$ and $v\in X_1$. As $X_{-1}^d\simeq(X_1)'$, we conclude
		\begin{equation*}
			\lambda'=-z-(A^*)_{-1}\lambda\qquad\text{in }D'(0,T;X_{-1}^d).
		\end{equation*}
		Since $z\in L^2(0,T;X)$ and $\lambda\in L^2(0,T;X)$, we obtain $\lambda'\in L^2(0,T;X_{-1}^d)$, i.e., $\lambda\in W^{1,2}(0,T;X_{-1}^d)\hookrightarrow C([0,T];X_{-1}^d)$. In particular, the traces $\lambda(0),\lambda(T)\in X_{-1}^d$ are well-defined. For $h\in C^1([0,T];X_1)$, the integration by parts formula for $X_{-1}^d$-valued Sobolev functions yields
		\begin{equation*}
			\int_0^T\langle \lambda(t),h'(t)\rangle_{X_{-1}^d,X_1}\,\mathrm{d}t=\langle \lambda(T),h(T)\rangle_{X_{-1}^d,X_1}-\langle \lambda(0),h(0)\rangle_{X_{-1}^d,X_1}-\int_0^T\langle \lambda'(t),h(t)\rangle_{X_{-1}^d,X_1}\,\mathrm{d}t.
		\end{equation*}
		Using $\langle \lambda(t),Ah(t)\rangle_X=\langle(A^*)_{-1}\lambda(t),h(t)\rangle_{X_{-1}^d,X_1}$ and $\lambda'+(A^*)_{-1}\lambda=-z$, this yields
		\begin{align*}
			\int_0^T\langle \lambda(t),h'(t)-Ah(t)\rangle_X\,\mathrm{d}t&=\langle \lambda(T),h(T)\rangle_{X_{-1}^d,X_1}-\langle \lambda(0),h(0)\rangle_{X_{-1}^d,X_1}\\
			&\qquad-\int_0^T\langle \lambda'(t)+(A^*)_{-1}\lambda(t),h(t)\rangle_{X_{-1}^d,X_1}\,\mathrm{d}t\\
			&=\langle \lambda(T),h(T)\rangle_{X_{-1}^d,X_1}-\langle \lambda(0),h(0)\rangle_{X_{-1}^d,X_1}+\int_0^T\langle z(t),h(t)\rangle_X\,\mathrm{d}t.
		\end{align*}
		On the other hand, \eqref{eq:adjoint_inner_product} implies
		\begin{equation*}
			\int_0^T\langle \lambda(t),h'(t)-Ah(t)\rangle_X\,\mathrm{d}t=\int_0^T\langle z(t),h(t)\rangle_X\,\mathrm{d}t+\langle z_T,h(T)\rangle_X-\langle \lambda_0,h(0)\rangle_X.
		\end{equation*}
		Combining these two identities gives
		\begin{equation*}
			\langle \lambda(T)-z_T,h(T)\rangle_{X_{-1}^d,X_1}+\langle \lambda_0-\lambda(0),h(0)\rangle_{X_{-1}^d,X_1}=0\qquad\text{for all }h\in C^1([0,T];X_1).
		\end{equation*}
		Choosing $h\in C^1([0,T];X_1)$ with $h(0)=0$ and arbitrary $h(T)\in X_1$, and conversely with $h(T)=0$ and arbitrary $h(0)\in X_1$, yields $\lambda(T)=z_T$ and $\lambda(0)=\lambda_0$ in $X_{-1}^d\simeq(X_1)'$. As $z_T,\lambda_0\in X$, we conclude that $\lambda(T),\lambda(0)\in X$ and that these equalities hold in $X$.

		It remains to show $\lambda\in C([0,T];X)$. The time-reversed function $q\coloneqq \lambda(T-\cdot)$ satisfies $q\in W^{1,2}(0,T;X_{-1}^d)\cap L^2(0,T;X)$ and
		\begin{equation*}
			q'=(A^*)_{-1}q+z(T-\cdot)\quad\text{in }L^2(0,T;X_{-1}^d),\qquad q(0)=z_T.
		\end{equation*}
		As $(A^*)_{-1}$ with domain $X$ generates the extrapolation $(\mathbb{T}^*(t)_{-1})_{t\geq0}$ of the semigroup $(\mathbb{T}^*(t))_{t\geq0}$ to $X_{-1}^d$, see \cite[Sec.~2.10]{tucsnak2009observation}, $q$ is a solution of this problem in $X_{-1}^d$ in the sense of \cite[Def.~4.1.1]{tucsnak2009observation} with $A^*$ in place of $A$, and hence it is given by the variation of constants formula, see \cite[Prop.~4.1.4]{tucsnak2009observation}. Reversing time again, we obtain
		\begin{equation*}
			\lambda(t)=\mathbb{T}^*(T-t)_{-1}z_T+\int_t^T\mathbb{T}^*(s-t)_{-1}z(s)\,\mathrm{d}s,\qquad t\in[0,T].
		\end{equation*}
		Since $\mathbb{T}^*(t)_{-1}$ coincides with $\mathbb{T}^*(t)$ on $X$, and $z_T\in X$ as well as $z\in L^2(0,T;X)$, we conclude $\lambda\in C([0,T];X)$. Together with $\lambda'+(A^*)_{-1}\lambda=-z\in L^2(0,T;X)$, this shows $\lambda\in E_2(0,T;-A^*)$, $(\lambda,\lambda_0)=E_{0,-A^*}\lambda$ and $\calA_E'(\lambda,\lambda(0))=(z,z_T)=\Ad \lambda$.

		It remains to prove $E_{0,-A^*}(E_2(0,T;-A^*))\subset\dom{\calA_E'}$. Let $\lambda\in E_2(0,T;-A^*)$. By Proposition~\ref{prop:cauchy_companion_identity}, we have
		\begin{equation*}
			\left\langle\calA_E\stack{h}{h(T)},\stack{\lambda}{\lambda(0)}\right\rangle_\calP=(\calA'E_{0,-A^*}\lambda)(h)=(E_{T,A}'\Ad \lambda)(h)=\left\langle\Ad \lambda,\stack{h}{h(T)}\right\rangle_\calP
		\end{equation*}
		for all $h\in E_2(0,T;A)$. Hence, \eqref{eq:adjoint_inner_product} holds with $(\lambda_0,z,z_T)=(\lambda(0),-\lambda'-(A^*)_{-1}\lambda,\lambda(T))$, i.e., $(\lambda,\lambda(0))\in\dom{\calA_E'}$ and $\calA_E'(\lambda,\lambda(0))=\Ad \lambda$.

		A combination of Lemma~\ref{cor:ETA} and Lemma~\ref{cor:backward_iso} shows that
        \begin{equation*}
            \|\lambda\|_{E_2(0,T;-A^*)} \leq \|(\Ad)^{-1}\|\,\|\Ad \lambda\|_\calP\leq C \left( \|E_{0,-A^*}\lambda\|_{\calP} +\|\Ad \lambda\|_\calP  \right) \leq C' \|\lambda\|_{E_2(0,T;-A^*)}
        \end{equation*}
        for $\lambda\in E_2(0,T;-A^*)$ and some constants $C,C' > 0$. Thus, the graph norm $\|E_{0,-A^*}\lambda\|_{\dom{\calA_E'}}$ and  $\|\lambda\|_{E_2(0,T;-A^*)}$ are equivalent. Hence, $E_{0,-A^*}$ is an isomorphism onto $\dom{\calA_E'}$ endowed with the graph norm, and the last assertion follows from Proposition~\ref{prop:dual_companion} and Proposition~\ref{prop:companion_unique}.
	\end{proof}
	\begin{rmrk}
		Theorem~\ref{thm:cauchy_companion} also follows from Proposition~\ref{prop:cauchy_companion_identity}, Lemma~\ref{cor:backward_iso} and Proposition~\ref{prop:companion_unique}, which shows that, in our framework, the identification of the dual companion merely requires an integration by parts formula and the bounded invertibility of a candidate backward operator. The direct proof above, however, shows explicitly how the multiplier inherits continuity in time from the backward evolution with adjoint generator $A^*$.
	\end{rmrk}
	For the embedding $E=E_{T,A}$, the identification of the backward evolution space in the sense of Proposition~\ref{prop:companion_unique} is thus given by
	\begin{equation*}
		R=E_{0,-A^*}:E_2(0,T;-A^*)\to\Xd,
	\end{equation*}
	and the corresponding operator $\Ad$ is precisely the backward operator introduced in \eqref{eq:bkw_cauchy}. Conversely, by Corollary~\ref{cor:companion_duality}, the dual companion of $(E_2(0,T;-A^*),\allowbreak E_{0,-A^*},\allowbreak\Ad)$ is represented by $(E_2(0,T;A),\allowbreak E_{T,A},\allowbreak\calA)$, i.e., the forward problem is recovered as the dual companion of the backward problem.

	\smallskip

	\noindent\textbf{The optimal control problem and optimality conditions.} We consider \eqref{eq:OCP} with $\calX=E_2(0,T;A)$, $\calU=L^2(0,T;U)$, $\calB u=(Bu,0)$, $b=(0,x_0)$, $\theta=\iota_{\Uad}$ with $\Uad=\{u\in L^2(0,T;U):u(t)\in U_\mathrm{ad}\text{ for a.e.\ }t\in(0,T)\}$ for a nonempty, closed and convex set $U_\mathrm{ad}\subset U$, and $J(x)=f(E_{T,A}x)$ with
	\begin{equation*}
		f(h,h_T)=\tfrac12\|h-x_d\|^2_{L^2(0,T;X)}+\varphi(h_T)
	\end{equation*}
	for some $x_d\in L^2(0,T;X)$ and a convex and Fr\'echet differentiable function $\varphi:X\to\R$. By Theorem~\ref{thm:inv_of_A}, the state equation $\calA x-\calB u=(0,x_0)$ is equivalent to \eqref{eq:abstract_Cauchy} in the weak sense.
	\begin{crllr}\label{cor:forward_backward}
		The triple $(\xopt,\uopt,\popt)$ satisfies \eqref{eq:optsys} if and only if $\popt=(\lopt,\lopt(0))$ for some $\lopt\in E_2(0,T;-A^*)$ with
		\begin{equation}\label{eq:optsys_semilinear}
			\left\{
			\begin{aligned}
				(\xopt)'&=A_{-1}\xopt+B\uopt, && \xopt(0)=x_0,\\
				-\lopt'-(A^*)_{-1}\lopt&=-(\xopt-x_d), && \lopt(T)=-\nabla\varphi(\xopt(T)),\\
				\uopt&=P_{U_\mathrm{ad}}\big(\tfrac{1}{\alpha}B^*\lopt\big) && \text{a.e.\ in }(0,T),
			\end{aligned}
			\right.
		\end{equation}
		i.e., $(\xopt,\lopt)\in C([0,T];X)^2$ is the mild solution of the forward-backward system
		\begin{align*}
			\xopt(t)&=\mathbb{T}(t)x_0+\int_0^t\mathbb{T}(t-s)B\uopt(s)\,\mathrm{d}s,\\
			\lopt(t)&=-\mathbb{T}^*(T-t)\nabla\varphi(\xopt(T))-\int_t^T\mathbb{T}^*(s-t)\big(\xopt(s)-x_d(s)\big)\,\mathrm{d}s,
		\end{align*}
		for $t\in[0,T]$. In particular, \eqref{eq:optsys_semilinear} has a unique solution.
	\end{crllr}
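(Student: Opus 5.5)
The plan is to apply Theorem~\ref{thm:adjoint_regularity_general} with $\calX=E_2(0,T;A)$, $\calH=\calP=L^2(0,T;X)\times X$ and $E=E_{T,A}$, which is continuous and injective with dense range by Lemma~\ref{cor:ETA}. Together with Theorem~\ref{thm:cauchy_companion}, the dual companion is represented by $(E_2(0,T;-A^*),E_{0,-A^*},\Ad)$.

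First I would check the hypotheses on $f$. The function $f(h,h_T)=\tfrac12\|h-x_d\|^2_{L^2(0,T;X)}+\varphi(h_T)$ is Fr\'echet differentiable on $\calH$ with $Df(h,h_T)=(h-x_d,\nabla\varphi(h_T))$, where $\calH'$ is identified with $\calH$. Moreover $J=f\circ E_{T,A}$ is convex and differentiable, so the setting of Section~\ref{sec:abstract_ocp} applies. Theorem~\ref{thm:adjoint_regularity_general} then gives the equivalence: \eqref{eq:optsys} holds if and only if $\popt=\Ed\mu$ for a unique $\mu\in\Xd$ solving \eqref{eq:general_balanced_optsys_direct}.

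Next I would transport this to the representation via Proposition~\ref{prop:companion_unique}. With $R=(\Ad_{\mathrm{abs}})^{-1}\Ad$, where $\Ad_{\mathrm{abs}}$ denotes the abstract companion operator, we have $\Ed R=E_{0,-A^*}$ and $\Ad_{\mathrm{abs}}R=\Ad$. Writing $\mu=R\lopt$, the conditions become the following.
\begin{itemize}
\item $\popt=E_{0,-A^*}\lopt=(\lopt,\lopt(0))$.
\item $\Ad\lopt=-Df(E_{T,A}\xopt)$, i.e.\ $-\lopt'-(A^*)_{-1}\lopt=-(\xopt-x_d)$ and $\lopt(T)=-\nabla\varphi(\xopt(T))$.
\item $\uopt=\prox_{\theta/\alpha}(\tfrac1\alpha\calB'(\lopt,\lopt(0)))$.
\end{itemize}
For the control equation I would compute $\calB'(\lambda,\lambda_0)=B^*\lambda$ directly from $\calB u=(Bu,0)$. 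I would also use $\prox_{\theta/\alpha}=P_{\Uad}$, since scaling an indicator function does not change it. Finally, $P_{\Uad}$ acts pointwise as $P_{U_\mathrm{ad}}$; this follows from the pointwise minimization argument used in Corollary~\ref{cor:elliptic_optsys}. By Theorem~\ref{thm:inv_of_A}, the state line $\calA\xopt-\calB\uopt=(0,x_0)$ is exactly $(\xopt)'-A_{-1}\xopt=B\uopt$ with $\xopt(0)=x_0$, which is the first line.

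The mild-solution representation then follows from \eqref{eq:mild_sol_general} applied with $g=B\uopt$, and from the inverse formula in Lemma~\ref{cor:backward_iso} with $g=-(\xopt-x_d)$ and $\lambda_T=-\nabla\varphi(\xopt(T))$. Both formulas also show that $\xopt,\lopt\in C([0,T];X)$.

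Uniqueness of the solution of \eqref{eq:optsys_semilinear} comes from combining three facts:
\begin{itemize}
\item Theorem~\ref{thm:existence_optimal_sol} gives existence and uniqueness of $(\xopt,\uopt,\popt)$.
\item The optimality conditions there are necessary and sufficient, so any solution of \eqref{eq:optsys} is the optimal triple.
\item $\lopt$ is uniquely determined by $\popt$, because $E_{0,-A^*}$ is injective.
\end{itemize}

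The only mildly delicate step is the bookkeeping through the isomorphism $R$. One must make sure that the abstract multiplier $\Ed\mu$ coincides with $(\lopt,\lopt(0))$, so that $\calB'$ acts on the first component. This is guaranteed by $\Ed R=E_{0,-A^*}$ from Proposition~\ref{prop:companion_unique}.
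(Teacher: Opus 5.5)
Your proposal is correct and follows essentially the same route as the paper. Both apply Theorem~\ref{thm:adjoint_regularity_general} with $E=E_{T,A}$ and the representation $(E_2(0,T;-A^*),E_{0,-A^*},\Ad)$ from Theorem~\ref{thm:cauchy_companion}, use $\calB'(\lambda,\lambda_0)=B^*\lambda$ and the pointwise projection, pass to the mild form via Theorem~\ref{thm:inv_of_A} and Lemma~\ref{cor:backward_iso}, and obtain uniqueness from the sufficiency of the optimality conditions; your explicit bookkeeping through the isomorphism $R$ of Proposition~\ref{prop:companion_unique} only spells out a step the paper leaves implicit.
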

	\begin{proof}
		By Lemma~\ref{cor:ETA}, $E_{T,A}$ is continuous and injective with dense range, and $f$ is convex and Fr\'echet differentiable with $\nabla f(h,h_T)=(h-x_d,\nabla\varphi(h_T))$. Hence, Theorems~\ref{thm:existence_optimal_sol} and~\ref{thm:adjoint_regularity_general} apply, where, by Theorem~\ref{thm:cauchy_companion}, the dual companion is represented by $(E_2(0,T;-A^*),E_{0,-A^*},\Ad)$. Since $\prox_{\theta/\alpha}=P_{\Uad}$ and $\Uad$ is defined pointwise, we have $(P_{\Uad}v)(t)=P_{U_\mathrm{ad}}(v(t))$ for a.e.\ $t\in(0,T)$. Together with $\calB'(\lambda,\lambda_0)=B^*\lambda$, Theorem~\ref{thm:adjoint_regularity_general} yields the characterization by \eqref{eq:optsys_semilinear}, which is equivalent to the mild formulation by Theorem~\ref{thm:inv_of_A} and Lemma~\ref{cor:backward_iso}. Moreover, the optimality conditions in the proof of Theorem~\ref{thm:existence_optimal_sol} are necessary and sufficient, such that \eqref{eq:optsys} and hence \eqref{eq:optsys_semilinear} have exactly one solution.
	\end{proof}
	In this form, the regularity of state and adjoint is balanced, as they belong to solution spaces of the same type, i.e., $\xopt\in E_2(0,T;A)$ and $\lopt\in E_2(0,T;-A^*)$. In contrast to Subsection~\ref{subsec:parabolic}, no coercivity or analyticity is required, such that, e.g., hyperbolic problems are covered. Moreover, for $X=\R^n$, we have $X_{-1}=X$ and $E_2(0,T;A)=H^1(0,T;\R^n)$ with equivalent norms, such that Corollary~\ref{cor:forward_backward} recovers the forward-backward system \eqref{eq:ODE_fb} of Section~\ref{sec:ode_example}.
	\begin{rmrk}
	In \cite{farkas2025dissipativity}, the optimality system of linear-quadratic optimal control problems governed by $C_0$-groups is decomposed in time by means of an unbounded operator, which is first defined on functions in $H^1(\ell,r;X)\cap L^2(\ell,r;\dom{A})$, called weak solutions in \cite{farkas2025dissipativity}, and then closed. By the above results, the domain of this closure is explicitly given by $E_{r,A}(E_2(\ell,r;A))\times E_{\ell,-A^*}(E_2(\ell,r;-A^*))\times X^2$ on each subinterval $[\ell,r]$. Indeed, up to a bounded perturbation, the operator is given by $\begin{bsmallmatrix}0&-\calA_E'\\ \calA_E&0\end{bsmallmatrix}$ with $E=E_{r,A}$, whose closedness follows from Lemma~\ref{lem:schiela_38_equiv} and Theorem~\ref{thm:cauchy_companion}, and these functions are dense in $E_2(\ell,r;A)$ and $E_2(\ell,r;-A^*)$, see the proof of Proposition~\ref{prop:Green}.
\end{rmrk}
	\subsection{Wave equation with accumulated-state observation}\label{subsec:wave}
	In this subsection, we consider the wave equation. Formally, it is contained in the setting of Subsection~\ref{subsec:cauchy}, but the cost functional is different. In particular, it depends on the accumulated wave state, which leads to an embedding $E$ different from the trace embedding $E_{T,A}$.
	Let $A_0:H\supset\dom{A_0}\to H$ be self-adjoint and uniformly positive on a Hilbert space $H$, let $H_{1/2}\coloneqq\dom{A_0^{1/2}}$ with $\langle\cdot,\cdot\rangle_{H_{1/2}}\coloneqq\langle A_0^{1/2}\cdot,A_0^{1/2}\cdot\rangle_H$, $H_1\coloneqq\dom{A_0}$, and let $H_{-1/2}$ be the dual space of $H_{1/2}$ with respect to the pivot space $H$. Define the \textit{energy space}
\begin{equation*}
    X \coloneqq H_{1/2} \times H, \qquad \left\langle \stack{u}{v}, \stack{z}{w} \right\rangle_X \coloneqq \langle A_0^{1/2} u,A_0^{1/2}z \rangle_{H} + \langle v, w \rangle_H
\end{equation*} and the wave operator
	\begin{equation*}
		A\coloneqq\begin{bmatrix}0&I\\-A_0&0\end{bmatrix},\qquad \dom{A}=H_1\times H_{1/2}.
	\end{equation*}
	As is well known, $A$ is skew-adjoint and thus generates a unitary group on $X$. Moreover, $X_1=X_1^d=H_1\times H_{1/2}$ and $X_{-1}=X_{-1}^d=H\times H_{-1/2}$, and $A_{-1}=-(A^*)_{-1}$ is given by the same matrix with $A_0$ extended to an operator in $L(H_{1/2},H_{-1/2})$. By definition
\begin{align*}
    E_2(0,T;A) &= \left\{ \stack{y}{v} \in C([0,T];H_{1/2} \times H) : y' - v \in L^2(0,T;H_{1/2}) , v' + A_0 y \in L^2(0,T;H) \right\} \\
   \left|\left|\stack{y \vphantom{y'}}{v \vphantom{v'+A_0}} \right|\right|_{E_2(0,T;A)} &= \left|\left|\stack{y \vphantom{y'}}{v \vphantom{v'+A_0}} \right|\right|_{C([0,T];H_{1/2} \times H)} +\left|\left|\stack{y'-v}{v'+A_0 y} \right|\right|_{L^2(0,T;H_{1/2} \times H)} .
\end{align*}
and, in particular, $E_2(0,T;-A^*)=E_2(0,T;A)$.

	\smallskip

	\noindent\textbf{The optimal control problem.} Let $V:L^2(0,T;X)\to L^2(0,T;X)$, $(Vx)(t)\coloneqq\int_0^tx(s)\,\mathrm{d}s$, be the Volterra operator and consider
	\begin{equation*}
		E:E_2(0,T;A)\to\calP,\qquad Ex\coloneqq\stack{Vx}{x(T)}.
	\end{equation*}
	Then $E$ is continuous and injective, as $Vx=0$ implies $x=0$. Moreover, $E$ has dense range. Indeed, for $(g,\xi)\in\calP$, let $g_n\in C_c^2((0,T);X_1)$, $\xi_n\in X_1$ and $\eta_n$ be as in the proof of Lemma~\ref{cor:ETA}. Then $x_n\coloneqq g_n'+\eta_n\xi_n\in C^1([0,T];X_1)$ satisfies $x_n(T)=\xi_n$ and $Vx_n=g_n+V(\eta_n\xi_n)$ with $\|V(\eta_n\xi_n)\|_{L^2(0,T;X)}\leq\tfrac{T^{1/2}}{n}\|\xi_n\|_X$, such that $Ex_n\to(g,\xi)$. For $x_d\in L^2(0,T;X)$, a convex and Fr\'echet differentiable $\varphi:X\to\R$, $\calU=L^2(0,T;H)$ and $\alpha>0$, we consider
	\begin{equation}\label{eq:OCP_wave_accumulated}
		\min_{(x,u)\in E_2(0,T;A)\times L^2(0,T;H)}\ \tfrac{1}{2}\|Vx-x_d\|_{L^2(0,T;X)}^2+\varphi(x(T))+\tfrac{\alpha}{2}\|u\|_{L^2(0,T;H)}^2\quad\mathrm{s.t.}\quad\calA x-\calB u=\stack{0}{x_0},
	\end{equation}
	where $\calB u\coloneqq((0,u),0)$, i.e., the control acts on the second component of the wave equation. This is \eqref{eq:OCP} without control constraints, i.e., with $\theta=0$. Then $J=f\circ E$ with $f(h,h_T)=\tfrac12\|h-x_d\|^2_{L^2(0,T;X)}+\varphi(h_T)$. In contrast to a standard tracking-type cost, the value $(Vx)(t)$ depends on the complete history of the state on $[0,t]$.
	\smallskip

	\noindent\textbf{Dual companion and balanced optimality system.} The dual companion is identified by Proposition~\ref{prop:companion_unique} as follows.
	\begin{prpstn}\label{prop:wave_accumulated_companion}
		The dual companion of $(E_2(0,T;A),E,\calA)$ is represented by
		\begin{equation*}
			\Xd=\left\{\lambda\in E_2(0,T;A):\lambda'-A_{-1}\lambda\in H^1(0,T;X),\ (\lambda'-A_{-1}\lambda)(T)=0\right\},
		\end{equation*}
		together with $\Ed \lambda=(\lambda,\lambda(0))$ and $\Ad \lambda=((\lambda'-A_{-1}\lambda)',\lambda(T))$.
	\end{prpstn}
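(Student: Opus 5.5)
The plan is to apply Proposition~\ref{prop:companion_unique} with the candidate $\tXd\coloneqq\Xd$ from the statement, $\tEd\lambda\coloneqq(\lambda,\lambda(0))$ and $\tAd\lambda\coloneqq(w_\lambda',\lambda(T))$, where $w_\lambda\coloneqq\lambda'-A_{-1}\lambda$. This requires three things: (a) $\tXd$ is a Banach space, (b) $\tAd:\tXd\to\calP=\calH'$ is an isomorphism, and (c) the identity $\calA'\tEd=E'\tAd$ holds.

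For (a), I endow $\tXd$ with the norm $\|\lambda\|_{E_2(0,T;A)}+\|w_\lambda\|_{H^1(0,T;X)}$. Completeness should follow as in Lemma~\ref{lem:Ep_Banach}. Let $(\lambda_n)$ be Cauchy. Then $\lambda_n\to\lambda$ in $E_2(0,T;A)$, so $w_{\lambda_n}\to w_\lambda$ in $L^2(0,T;X)$. Also $w_{\lambda_n}\to w$ in $H^1(0,T;X)$, hence $w=w_\lambda$. Finally, $H^1(0,T;X)\hookrightarrow C([0,T];X)$ gives $w_\lambda(T)=\lim w_{\lambda_n}(T)=0$. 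The map $\tEd$ is clearly linear, and $\tAd$ is bounded because $H^1(0,T;X)\hookrightarrow C([0,T];X)$ ensures $\lambda(T)$ is controlled by the $C([0,T];X)$-norm.

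For (c), I use that $A$ is skew-adjoint, so $-A^*=A$, $(A^*)_{-1}=-A_{-1}$ and $E_2(0,T;-A^*)=E_2(0,T;A)$. Then Proposition~\ref{prop:Green} applies to $h\in E_2(0,T;A)$ and $\lambda\in\tXd$ with $g_\lambda=\lambda'+(A^*)_{-1}\lambda=w_\lambda$. It yields
\begin{equation*}
(\calA'\tEd\lambda)(h)=\int_0^T\langle h'-A_{-1}h,\lambda\rangle_X\,\mathrm{d}t+\langle h(0),\lambda(0)\rangle_X=\langle\lambda(T),h(T)\rangle_X-\int_0^T\langle w_\lambda,h\rangle_X\,\mathrm{d}t.
\end{equation*}
On the other side, $(E'\tAd\lambda)(h)=\int_0^T\langle w_\lambda',Vh\rangle_X\,\mathrm{d}t+\langle\lambda(T),h(T)\rangle_X$. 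Since $w_\lambda\in H^1(0,T;X)$ and $Vh\in H^1(0,T;X)$ with $(Vh)'=h$, a scalar integration by parts of $t\mapsto\langle w_\lambda(t),(Vh)(t)\rangle_X$ gives
\begin{equation*}
\int_0^T\langle w_\lambda',Vh\rangle_X\,\mathrm{d}t=\langle w_\lambda(T),(Vh)(T)\rangle_X-\int_0^T\langle w_\lambda,h\rangle_X\,\mathrm{d}t.
\end{equation*}
The boundary term at $0$ vanishes because $(Vh)(0)=0$, and the term at $T$ vanishes because $w_\lambda(T)=0$. The two expressions therefore coincide. This is exactly why the condition $w_\lambda(T)=0$ is built into $\tXd$.

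For (b), given $(g,\xi)\in\calP$, I first set $w(t)\coloneqq-\int_t^Tg(s)\,\mathrm{d}s$. This is the unique $w\in H^1(0,T;X)$ with $w'=g$ and $w(T)=0$, and $\|w\|_{H^1}\le C\|g\|_{L^2}$. Next I solve $\lambda'-A_{-1}\lambda=w$, $\lambda(T)=\xi$. This is the backward problem $-\lambda'-(A^*)_{-1}\lambda=-w$ with terminal value $\xi$, so Lemma~\ref{cor:backward_iso} gives a unique $\lambda\in E_2(0,T;A)$ with $\|\lambda\|_{E_2}\le C(\|w\|_{L^2}+\|\xi\|_X)$. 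Then $\lambda\in\tXd$ and $\tAd\lambda=(g,\xi)$, which proves surjectivity together with a bounded inverse. For injectivity, $\tAd\lambda=0$ forces $w_\lambda'=0$ and $w_\lambda(T)=0$, hence $w_\lambda=0$; with $\lambda(T)=0$, Lemma~\ref{cor:backward_iso} then gives $\lambda=0$.

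Proposition~\ref{prop:companion_unique} then yields the claim. I expect the main care point to be (a) together with the correct sign bookkeeping when specializing Proposition~\ref{prop:Green} and Lemma~\ref{cor:backward_iso} via $A^*=-A$. The rest reduces to these earlier results.
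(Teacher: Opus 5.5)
Your proof is correct, but it takes a different route from the paper's. You guess the candidate and verify the hypotheses of Proposition~\ref{prop:companion_unique}, in the spirit of the remark after Theorem~\ref{thm:cauchy_companion}. The paper instead computes $\dom{\calA_E'}$ directly, in three steps:
\begin{enumerate}
\item It factors $E=\diag(V,I)E_{T,A}$, so that $\ran{E'}\subset\ran{E_{T,A}'}$.
\item It invokes Lemma~\ref{lem:schiela_38_equiv} and Theorem~\ref{thm:cauchy_companion} to know a priori that every admissible multiplier has the form $(\lambda,\lambda(0))$ with $\lambda\in E_2(0,T;A)$.
\item It uses the injectivity of $E_{T,A}'$ and the characterization $\ran{V^*}=\{q\in H^1(0,T;X):q(T)=0\}$ to read off the condition on $\lambda'-A_{-1}\lambda$ and the formula for $\calA_E'$.
\end{enumerate}
The norm on $\Xd$ is then just the transported graph norm.

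The paper's route derives the space rather than verifying a guess. It explains the terminal condition $(\lambda'-A_{-1}\lambda)(T)=0$ as membership in $\ran{V^*}$, and it needs no separate completeness or invertibility argument.

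Your route needs only the Green identity and Lemma~\ref{cor:backward_iso}, not the full domain characterization of Theorem~\ref{thm:cauchy_companion}. The price is checking completeness yourself and inverting $\tAd$ by the cascade $w'=g$, $w(T)=0$, then $\lambda'-A_{-1}\lambda=w$, $\lambda(T)=\xi$; the paper only mentions this cascade after the proof. In return you get an intrinsic norm $\|\lambda\|_{E_2(0,T;A)}+\|\lambda'-A_{-1}\lambda\|_{H^1(0,T;X)}$, which your bound on the inverse shows is equivalent to the graph norm. You also get an explicit solution procedure for the adjoint equation.

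One small slip in (a): $\|\lambda(T)\|_X$ is bounded directly by the $C([0,T];X)$-part of the $E_2(0,T;A)$-norm. The embedding $H^1(0,T;X)\hookrightarrow C([0,T];X)$ is what you need for $w_\lambda(T)$, not for $\lambda(T)$.
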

		\begin{proof}
		By Fubini's theorem, as
		\begin{equation*}
			\langle Vx,z\rangle_{L^2(0,T;X)}=\int_0^T\Big\langle\int_0^tx(s)\,\mathrm{d}s,z(t)\Big\rangle_X\,\mathrm{d}t=\int_0^T\Big\langle x(s),\int_s^Tz(t)\,\mathrm{d}t\Big\rangle_X\,\mathrm{d}s
		\end{equation*}
		for all $x,z\in L^2(0,T;X)$. Thus, the adjoint of the Volterra operator $V$ with respect to the inner product of $L^2(0,T;X)$ is given by
		\begin{equation*}
			(V^*z)(t)=\int_t^Tz(s)\,\mathrm{d}s,\qquad t\in[0,T],
		\end{equation*}
		Moreover, by the fundamental theorem of calculus for Bochner integrals, for $z\in L^2(0,T;X)$, the function $q=V^*z$ belongs to $H^1(0,T;X)$ with $q'=-z$ and $q(T)=0$. Conversely, every $q\in H^1(0,T;X)$ with $q(T)=0$ satisfies
		\begin{equation*}
			q(t)=q(T)-\int_t^Tq'(s)\,\mathrm{d}s=\big(V^*(-q')\big)(t),\qquad t\in[0,T].
		\end{equation*} Hence, \begin{equation*}
			\ran{V^*}=\left\{q\in H^1(0,T;X):q(T)=0\right\}.
		\end{equation*}
		In particular, $V^*$ is injective and $(V^*)^{-1}q=-q'$ for $q\in\ran{V^*}$.

		Since $E=\diag(V,I)E_{T,A}$, we have $E'=E_{T,A}'\diag(V^*,I)$ and hence $\ran{E'}\subset\ran{E_{T,A}'}$. Thus, by Lemma~\ref{lem:schiela_38_equiv} and Theorem~\ref{thm:cauchy_companion}, every $p\in\calP$ with $\calA'p\in\ran{E'}$ is of the form $p=(\lambda,\lambda(0))$ with $\lambda\in E_2(0,T;-A^*)$. As $A$ is skew-adjoint, we have $(A^*)_{-1}=-A_{-1}$ and hence $E_2(0,T;-A^*)=E_2(0,T;A)$. For $\lambda\in E_2(0,T;A)$, the Green identity \eqref{eq:int_by_parts} yields
		\begin{equation*}
			\left\langle\calA h,\stack{\lambda}{\lambda(0)}\right\rangle_\calP=\int_0^T\big\langle-\lambda'(t)+A_{-1}\lambda(t),h(t)\big\rangle_X\,\mathrm{d}t+\langle \lambda(T),h(T)\rangle_X,\qquad h\in E_2(0,T;A),
		\end{equation*}
		i.e., $\calA'(\lambda,\lambda(0))=E_{T,A}'(-\lambda'+A_{-1}\lambda,\lambda(T))$, cf.\ Proposition~\ref{prop:cauchy_companion_identity}. Consequently, $\calA'(\lambda,\lambda(0))\in\ran{E'}$ if and only if there exists $(\eta,\eta_T)\in\calP$ with
		\begin{equation*}
			E_{T,A}'\stack{-\lambda'+A_{-1}\lambda}{\lambda(T)}=E_{T,A}'\stack{V^*\eta}{\eta_T},
		\end{equation*}
		which, by the injectivity of $E_{T,A}'$, see Lemma~\ref{cor:ETA}, holds if and only if $-\lambda'+A_{-1}\lambda=V^*\eta$ and $\lambda(T)=\eta_T$. Hence, $\calA'(\lambda,\lambda(0))\in\ran{E'}$ if and only if $-\lambda'+A_{-1}\lambda\in\ran{V^*}$, which, by the characterization of $\ran{V^*}$, is equivalent to
		\begin{equation*}
			\lambda'-A_{-1}\lambda\in H^1(0,T;X),\qquad \left(\lambda'-A_{-1}\lambda\right)(T)=0,
		\end{equation*}
		i.e., to $\lambda\in\Xd$. For such $\lambda$, Lemma~\ref{lem:schiela_38_equiv} yields
		\begin{equation*}
			\calA_E'\stack{\lambda}{\lambda(0)}=(E')^{-1}\calA'\stack{\lambda}{\lambda(0)}=\begin{bmatrix}(V^*)^{-1}&0\\0&I\end{bmatrix}(E_{T,A}')^{-1}\calA'\stack{\lambda}{\lambda(0)}
		\end{equation*}
		and hence
		\begin{equation*}
			\calA_E'\stack{\lambda}{\lambda(0)}=\stack{(V^*)^{-1}(-\lambda'+A_{-1}\lambda)}{\lambda(T)}=\stack{(\lambda'-A_{-1}\lambda)'}{\lambda(T)}=\Ad \lambda.
		\end{equation*}
		Thus, $\Ed:\Xd\to\dom{\calA_E'}$ is bijective with $\calA_E'\Ed=\Ad$, i.e., $\Ed$ is an isomorphism onto $\dom{\calA_E'}$ endowed with the graph norm if $\Xd$ is endowed with the norm $\|(\lambda,\lambda(0))\|_\calP+\|\Ad \lambda\|_\calP$, and the assertion follows from Proposition~\ref{prop:companion_unique}.
	\end{proof}
	Hence, setting $r\coloneqq \lambda'-A_{-1}\lambda$, the equation $\Ad \lambda=(g,\lambda_T)$ is equivalent to the cascade of the backward problems $r'=g$, $r(T)=0$, and $\lambda'-A_{-1}\lambda=r$, $\lambda(T)=\lambda_T$.
    Consequently, Theorem~\ref{thm:adjoint_regularity_general} yields the following.
	\begin{crllr}\label{cor:wave_optsys}
		The triple $(\xopt,\uopt,\popt)$ satisfies \eqref{eq:optsys} if and only if $\popt=(\lopt,\lopt(0))$ for some $\lopt=(\lopt_1,\lopt_2)\in\Xd$ with
		\begin{equation}\label{eq:optsys_wave}
			\left\{
			\begin{aligned}
				(\xopt)'-A_{-1}\xopt&=\stack{0}{\uopt}, && \xopt(0)=x_0,\\
				\ropt'&=x_d-V\xopt, && \ropt(T)=0,\\
				\lopt'-A_{-1}\lopt&=\ropt, && \lopt(T)=-\nabla\varphi(\xopt(T)),\\
				\uopt&=\tfrac1\alpha\lopt_2,
			\end{aligned}
			\right.
		\end{equation}
		where $\ropt\coloneqq\lopt'-A_{-1}\lopt$ and $\nabla\varphi$ denotes the Riesz representative with respect to the energy inner product. In particular, \eqref{eq:optsys_wave} has a unique solution.
	\end{crllr}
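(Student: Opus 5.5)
The plan is to apply Theorem~\ref{thm:adjoint_regularity_general} with the representation of the dual companion from Proposition~\ref{prop:wave_accumulated_companion}, and then translate the abstract system \eqref{eq:general_balanced_optsys_direct} line by line into \eqref{eq:optsys_wave}.

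\textbf{Step 1: hypotheses of Theorem~\ref{thm:adjoint_regularity_general}.}
\begin{itemize}
\item The embedding $E=\diag(V,I)E_{T,A}$ is continuous and injective with dense range, as shown before \eqref{eq:OCP_wave_accumulated}.
\item $J=f\circ E$, where $f$ is convex and Fr\'echet differentiable on $\calP$ with $\nabla f(h,h_T)=(h-x_d,\nabla\varphi(h_T))$, identifying $\calP$ with its dual via the energy inner product.
\end{itemize}
Hence $(\xopt,\uopt,\popt)$ satisfies \eqref{eq:optsys} if and only if $\popt=\Ed\lopt=(\lopt,\lopt(0))$ for a unique $\lopt\in\Xd$ satisfying
\[
\Ad\lopt=-\nabla f(E\xopt)=\bigl(x_d-V\xopt,\,-\nabla\varphi(\xopt(T))\bigr),
\]
together with the state equation and the control identity. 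Here Proposition~\ref{prop:companion_unique} transfers the statement from the abstract companion to this representation through the isomorphism $R$.

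\textbf{Step 2: the adjoint line.} By Proposition~\ref{prop:wave_accumulated_companion}, $\Ad\lopt=(\ropt',\lopt(T))$ with $\ropt=\lopt'-A_{-1}\lopt$. The adjoint equation therefore reads
\[
\ropt'=x_d-V\xopt,\qquad \lopt(T)=-\nabla\varphi(\xopt(T)).
\]
Membership $\lopt\in\Xd$ means $\lopt\in E_2(0,T;A)$, $\ropt\in H^1(0,T;X)$ and $\ropt(T)=0$. Together with the definition $\ropt=\lopt'-A_{-1}\lopt$, this gives exactly the second and third lines of \eqref{eq:optsys_wave}.

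Conversely, suppose $\lopt\in E_2(0,T;A)$ and $\ropt$ satisfy those two lines. Then $\ropt=V^*(V\xopt-x_d)$ lies in $H^1(0,T;X)$ with $\ropt(T)=0$, so $\lopt\in\Xd$ and $\Ad\lopt$ takes the required value. This is where I must be careful that the reformulation is an exact equivalence.

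\textbf{Step 3: the state equation.} By Theorem~\ref{thm:inv_of_A}, $\calA\xopt-\calB\uopt=(0,x_0)$ means $(\xopt)'-A_{-1}\xopt=(0,\uopt)$ and $\xopt(0)=x_0$.

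\textbf{Step 4: the control identity.} Since $\theta=0$, the proximal operator is the identity. It remains to compute $\calB'$. For $p=(\lambda,\lambda_0)\in\calP$ and $u\in L^2(0,T;H)$,
\[
\langle\calB u,p\rangle_{\calP}=\int_0^T\langle(0,u),\lambda\rangle_X\,\mathrm{d}t=\int_0^T\langle u,\lambda_2\rangle_H\,\mathrm{d}t,
\]
because the energy inner product on the second component is the $H$ inner product. Hence $\calB'p=\lambda_2$, and $\uopt=\tfrac1\alpha\lopt_2$.

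\textbf{Step 5: uniqueness.} This follows as in Corollary~\ref{cor:forward_backward}: the conditions in Theorem~\ref{thm:existence_optimal_sol} are necessary and sufficient, and $\popt$ is unique. Since $\lopt$ is unique and $\ropt$ is determined by $\lopt$, the system \eqref{eq:optsys_wave} has exactly one solution.

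The only delicate points are the computation of $\calB'$ with respect to the energy pairing and the correct sign bookkeeping in $-\nabla f(E\xopt)$. Everything else is direct substitution.
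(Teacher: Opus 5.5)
Your proposal is correct and follows essentially the same route as the paper. The paper also obtains the corollary directly from Theorem~\ref{thm:adjoint_regularity_general}, combined with the representation in Proposition~\ref{prop:wave_accumulated_companion}, using $\prox_{\theta/\alpha}=I$, $\calB'(q,q_0)=q_2$ with respect to the energy inner product, $\nabla f(h,h_T)=(h-x_d,\nabla\varphi(h_T))$, and $\ropt(T)=0$ from $\lopt\in\Xd$. Your explicit handling of the isomorphism $R$ and of the converse in Step~2 only spells out details the paper leaves implicit.
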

	\begin{proof}
		As $\theta=0$ and hence $\prox_{\theta/\alpha}=I$, $\calB'(q,q_0)=B^*q$ for $Bu=(0,u)$ with $B^*(z,w)=w$ with respect to the energy inner product, and $\nabla f(h,h_T)=(h-x_d,\nabla\varphi(h_T))$, the characterization follows from Theorem~\ref{thm:adjoint_regularity_general} and Proposition~\ref{prop:wave_accumulated_companion} with $\ropt\coloneqq\lopt'-A_{-1}\lopt$, where $\ropt(T)=0$ holds as $\lopt\in\Xd$.
	\end{proof}

\section{Conclusion}\label{sec:conclusion}
We have presented a framework which balances the regularity of state and multiplier in optimality systems of optimal control problems governed by operator equations. The key ingredient is the dual companion, which is unique up to isomorphism and can be identified by an integration by parts formula together with the bounded invertibility of a candidate backward operator. For evolution equations governed by $C_0$-semigroups, this yields a forward-backward optimality system in which state and adjoint belong to solution spaces of the same type, without any assumption on analyticity or maximal regularity.

\section*{Declaration on the use of generative AI}
During the preparation of this work, the authors used Claude Opus 5.5 (Anthropic) for language editing and typesetting. Moreover, the example of a wave equation with a cost functional acting on the accumulated state in Subsection~\ref{subsec:wave} was suggested by this tool. The authors reviewed, verified and edited all content, including all mathematical statements and proofs, and take full responsibility for the content of the publication.

\bibliographystyle{abbrv}
\bibliography{sample}
\end{document}